\newtheoremstyle{mystyle}{}{}{\rmfamily}%
{}{\normalfont\bfseries}{ }{ }{} 
\newtheorem{theorem}{Theorem}[section]
\newtheorem{proposition}[theorem]{Proposition}
\newtheorem{lemma}[theorem]{Lemma} 
\newtheorem{defn}[theorem]{Definition}
\theoremstyle{mystyle}
\newtheorem{remark}[theorem]{Remark}
\newtheorem{example}{Example}
\newcommand{\R}{\mathbb{R}}
\newcommand{\N}{\mathbb{N}}
\renewcommand{\H}{\mathbb{H}}
\newcommand{\Z}{\mathbb{Z}}
\newcommand{\cW}{\mathcal{W}}
\newcommand{\cH}{\mathcal{H}}
\newcommand{\V}{\mathbf{V}}
\newcommand{\E}{\mathbf{E}}
\newcommand{\G}{\mathcal{G}}
\newcommand{\te}{{\theta}}
\newcommand{\Om}{{\Omega}}
\newcommand{\Sig}{{\Sigma}}
\newcommand{\1}{\mathbf{1}}
\newcommand{\with}{\;:\;}
\DeclareMathOperator{\id}{id}
\begin{document} 
\selectlanguage{english}
\vspace*{1cm}

\begin{center}
\setstretch{2}
{\Huge \bfseries  Amenable graphs and the spectral radius of extensions of Markov maps}\\
%
Johannes Jaerisch\textsuperscript{(1)}, Elaine Rocha\textsuperscript{(2)} and Manuel Stadlbauer\textsuperscript{(3)}\\ 
\bigskip
{\scriptsize	
\textsuperscript{(1)}
Graduate School of Mathematics, Nagoya University,
Furocho, Chikusaku, Nagoya, 464-8602, Japan \\[-.2cm] 
\textsuperscript{(2)}
Universidade Federal do Vale do São Francisco,
Rua Antônio Figueira, 134.
56000-000 Salgueiro (PE), Brazil\\[-.2cm]
\textsuperscript{(3)}
Departamento de Matemática, Universidade Federal do Rio de Janeiro,
Ilha do Fundão,  21941-909 Rio de Janeiro (RJ), Brazil\\[-.2cm]
}
\bigskip \bigskip
{\small \today}
\end{center}

\begin{abstract} We discuss relations between the amenability of a graph and spectral properties of a random walk driven by a dynamical system. In order to include graphs which are not locally compact, we introduce the concept of amenability of weighted graphs, which generalises the usual notion as the new definition is shown to be equivalent to F\o lner's condition. 
As a first result, we obtain the following generalisation of Kesten's amenability criterion to graphs and non-independent increments: If the random walk is driven by a full-branched Gibbs-Markov map, the graph is amenable with respect to the weight induced by the random walk if and only if the spectral radius of the associated Markov operator is equal to one. By employing inducing schemes, one then obtains criteria for amenability through Markov maps with less regularity.\\
\indent  We conclude the paper with the following applications to Schreier graphs. If the random walk is driven by an uniformly expanding map with non-Markovian increments, then, under certain conditions, the Schreier graph is amenable if the probability of a return in time $n$ does not decay exponentially in $n$. Furthermore, in the context of geometrically finite Kleinian groups, one obtains a version of Brooks's amenability criterion for not necessarily normal subgroups. \\[3mm]
\textbf{Keywords} Amenability of a graph, Graph extension of a dynamical system, Spectral theory of transfer operators\\
\textbf{MSC 2020} 37A50, 05C81, 37C30
\end{abstract}

\section{Introduction and statement of main results}

The notion of amenability goes back to von Neumann who referred to a locally compact group $G$ as amenable if there exists a finitely additive probability measure which is invariant under translations by  elements of $G$. If, in addition, the group $G$ is countable, it is known from the seminal contributions by Følner and Kesten, that this abstract condition can be detected either by the growth of $gK \cap K$, for fixed $g\in G$, as the cardinality of $K\subset G$ tends to infinity (\cite{Folner--On-Groups-With-Full--MS1955}) or the exponential decay of the probabilities of returning in time $n$ of a simple random walk on $G$ (\cite{Kesten--Full-Banach-Mean-Values--MS1959}). In both cases, these criteria can be rephrased in terms of the Cayley graph of $G$, giving rise to definitions of amenability in the context of graphs through the growth of the boundary of finite subsets (see \cite{Gerl--Amenable-Groups-And-Amenable--1988}) or the spectral radius of a Markov operator of the random walk on a discrete semigroup (see \cite{Day--Convolutions-Means-And-Spectra--IJM1964}).

The aim of this note is to extend the concepts of amenability and random walks and relate this new form of amenability with the spectral theory of a Markov operator. Since we are interested in graphs which  might contain vertices with infinitely many adjacent edges, we introduce a notion of amenability for weighted graphs. That is, we refer to a graph $\G$ with vertices $\V$ and edges $\E$  as a \emph{weighted graph} with weight $p:E\to [0,1] $ if for all $v \in \V$, we have $\sum_{u \in \V} p((v,u)) =1$. The $\epsilon$-boundary of $K$, for $\epsilon > 0$ and $K \subset \V$ is then defined
as
\[  \partial^\epsilon K:= \{ v \in K: \exists e \in \E \hbox{ s.t. } s(e) =v, t(e) \notin K, p(e) > \epsilon) \}. \]
We then refer to the weighted graph $\G$ with weight $p$ as \emph{$p$-amenable} if
\begin{equation}\label{eq:isoperimetric}
\lim_{\epsilon \to 0} \inf\left\{  {|\partial^\epsilon K|}/{|K|}: {K \subset \V, |K|< \infty} \right\} =0. \end{equation}
Or, in other words, a graph is $p$-amenable if and only if, for all $\epsilon > 0$, the graph obtained by removing the edges of weight smaller than $\epsilon$ is amenable. In particular, if $p$ is uniformly bounded from below, then $p$-amenability coincides with Gerl's notion of amenability in \cite{Gerl--Amenable-Groups-And-Amenable--1988}. Observe that \eqref{eq:isoperimetric} is an asymptotic isoperimetric inequality which can be rephrased through Følner sequences (see Proposition \ref{prop:mu-amenable} below).

On the other side, we are interested in an object which provides more flexibility than a classical random walk on a graph. That is, we are interested in dynamical systems of the form
\begin{equation} \label{def:1st-skew}
T : X \times \V \mapsto X \times \V, (x,v) \mapsto (\theta(x),\kappa_x(v)),
\end{equation}
where $\theta: X \to X$ is sufficiently well behaved dynamical system and $\kappa \to \kappa_x$ is a map from $X$ to the set of bijections of $\V$ such that $(v,\kappa_x(v)) \in \E$. Hence, for any $x \in X$ and $v \in \V$, the evolution of the second coordinate of $T^n(x,v)$ corresponds to a walk on $\V$ along the edges $\E$ of $\G$. Moreover, if one chooses $x$ according to an $\theta$-invariant probability measure $\mu$ on $X$ (i.e. $\mu \circ \theta^{-1} = \mu$), one obtains a stationary random walk on $\V$ with not necessarily independent increments as indicated in the applications at the end of this introduction.  We refer  to  $T$  as  in \eqref{def:1st-skew} as an extension by the graph $\G=(\V,\E)$ through $\kappa$ (see Definition \ref{defn:graph_extension}). 

We now specify $\theta: X \to X$ for the first main result in a slightly simplified setting in order to avoid defining Markov maps in this introduction. Assume that $(X,\theta)$ is a full shift with an at most countable 
 alphabet $\mathcal{A}$, i.e. $\theta$ acts on $X:= \{ (x_0,x_1,\ldots) : x_i \in \mathcal{A} \hbox{ for } i \geq 0\}$ through $\theta: (x_0,x_1,\ldots)  \mapsto (x_1,x_2, \ldots)$.
Moreover, we assume that $(X,\theta)$ comes with a Borel probability measure $\mu$ on $X$ such that $\mu \circ \theta^{-1} = \mu$ and $\log {d\mu}/{d\mu \circ \theta}$ is Hölder continuous, where $ {d\mu}/{d\mu \circ \theta}$ stands for the Radon-Nikodym derivative with respect to regions of injectivity (for a definition without this detail, see Definition \ref{def:gm-map}). Moreover, we have to assume for Theorem A and B below that $\kappa$ only depends on the first coordinate.  We say that the graph $\G=(\V,\E)$ is \emph{ $\mu$-amenable} if it is $p$-amenable with respect to $p(u,v):= \mu(\{x \in X: \kappa_x(u)=v\})$. A further relevant definition  is the notion of \emph{uniform loops} which is satisfied if there exists a finite set $\mathcal{J} \subset X$ such that  $v \in \{ \kappa_x(v) : x \in \mathcal{J}\}$ for all $v \in \V$ (cf. Definition \ref{defn:graph_extension_uniform_loops}). 

In order to state the result, it remains to introduce the transfer operator $\widehat{T}$ of $T$, which is the unique operator acting on the $L^1$-space on $X \times \V$ with respect to the product of $\mu$ and the counting measure $m_\V$ on $\V$ such that
$ \int (f \circ T) g d\mu \otimes m_\V = \int f \widehat{T}(g) d\mu \otimes m_\V $
for all $f \in L^\infty$ and $g \in L^1$. Note that it follows from general ergodic theory, that
\[ \widehat{T}(g)(x,v) = \sum_{T(y,w)=(x,v)} \tfrac{d\mu}{d\mu \circ \theta}(y)g(y,w).\]

\noindent\textbf{Theorem A} (cf. Theorem \ref{theo:main_result})\textbf{.}
\textit{
 Let   $(X,\theta,\mu)$ and $\kappa$ be as above and assume that the extension $T$ by the graph $\G=(\V,\E)$ is topologically transitive and has uniform loops.    
Then the following are equivalent.
\begin{enumerate}
 \item The graph $\G$ is $\mu$-amenable.
 \item The spectral radius of $\widehat{T}$, acting on $\left\{ f: X\times \V \to \R | \sum_{v\in \V} (\| f(\cdot , v)\|_\infty)^2< \infty \right\}$, is equal to $1$.
  \item For each $\epsilon > 0$, there exists a finite subset $A \subset \V$ such that
 \[  \int | \widehat{T}(\1_{X \times A}) - \1_{X \times A} | d\mu \otimes m_\V \leq \epsilon \cdot m_\V(A).\]
\end{enumerate}}
We remark that the proof relies on methods developed in \cite{Stadlbauer--An-Extension-Of-Kestens--AM2013} and \cite{Jaerisch--Group-Extended-Markov-Systems--PMS2015}, and that Theorem A extends results in there to  graph extensions. Moreover, under a certain weak condition on the symmetry of $\mu$,  referred to as \emph{symmetric} in here (cf. \eqref{eq:weakly-symmetric}), it is possible to add a further equivalence in flavour of Kesten's result for symmetric random walks. Namely, $\G$ is  $\mu$-amenable if and only if
\begin{align*} \label{eq:1st-growth}
   \limsup_{n \to \infty} \sqrt[n]{ \mu\left(\left\{x \in X : T^n(x,v) \in X \times \{v\}\right\}\right) } = 1
\end{align*}
for some /any $v \in \V$ (cf. Proposition \ref{prop:symmetry}). Here, it is worth noting that this condition can be rephrased in terms of the Gurevic pressure (see Proposition \ref{prop:gurevic-pressure}). Namely, if $T$ is symmetric, then $\mu$-amenability is equivalent to $T$ having Gurevic pressure 0. Even though these results are of interest as they generalize the seminal results of Kesten (\cite{Kesten--Full-Banach-Mean-Values--MS1959}) and Day (\cite{Day--Convolutions-Means-And-Spectra--IJM1964}) for groups and semigroups to a walk on $\G$ driven by $\theta$, the motivation behind Theorem A is to use it as a tool for analysing graph extensions over a map $\theta: X \to X$ which admit an induced map or inducing scheme which can be modeled as a full shift.
As any change of the inducing scheme of $\theta$ also affects the skew product $T$, it was necessary to work with the quite technical notion of a \emph{Markov map with adequately embedded  Gibbs-Markov structure} (cf. Definition \ref{def:embedded Gibbs-Markov structure}) in order to provide the necessary tools for comparing the exponential growth rates of $T$ with its induced counterpart (cf. Proposition \ref{prop:pressure_T_vs_S}).
In particular, we were able to show that a non-exponential decay of the measure of certain returns implies amenability,  which is considered to be the hard direction in Kesten's amenability criterion. 

\medskip
\noindent\textbf{Theorem B} (cf. Theorem \ref{theo:main theorem - embedded GM structure})\textbf{.}
\textit{Suppose that $T$ is topologically transitive with   adequately embedded  Gibbs-Markov structure $(\Omega,\sigma)$  such that the induced graph extension satisfies the hypotheses of Theorem A . Moreover, assume that the inducing time decays exponentially  and that $\hat\kappa$ finitely covers $\kappa$ (cf. Definition  \ref{def:finitely-covers}). Then $\G$ is $\mu$-amenable
if for some $v\in \V$,  \[  \limsup_{n \to \infty} \sqrt[n]{ \mu\left(\left\{x \in \Omega : T^n(x,v) \in X \times \{v\}\right\}\right) } = 1 .\]
}

\medskip
The remainder of this paper is devoted to applications of Theorems A and B to random walks on graphs and groups and to Schreier graphs  whose construction we recall now. Let $G$ be a discrete group, $H$ a subgroup of $G$ and $\mathfrak{g} \subset G$ a generating set of $G$. The \emph{Schreier graph} $\mathcal{G}=(\V,\E)$ associated with $\mathfrak{g}$ is then defined as the graph whose vertices are the cosets $\V=\{ Hg : g \in G\}$ and edges $\E =  \{ (Hg,Hgh) : g \in G, h \in   \mathfrak{g} \}$ are given by the right action of $\mathfrak{g}$ on $\V$. It is worth noting that
$\mathcal{G}$ coincides with the Cayley graph of $G/H$ if $H$ is a normal subgroup of $G$.

In order to define a graph extension of the Markov map $(X,\theta)$, it now suffices to specify a map $\gamma: X \to \mathfrak{g}$, $\gamma \mapsto \gamma_x$ and define
\begin{equation} \label{eq:def-Schreier-graph-extension} T:  X \times \V \to X \times \V, \; (x,Hg) \mapsto (\te x, Hg\gamma_x). \end{equation}
If $\gamma$ is measurable with respect to the partition of the Markov map $\theta$, we say that the extension has \emph{Markovian increments}. In this case, the flexibility provided by embedded Markov maps and Theorem B allows to obtain the following sufficient conditions. For $H_0 := \bigcap_{g \in G} gHg^{-1}$, which is the maximal normal subgroup contained in $H$, define $T_0$ as in \eqref{eq:def-Schreier-graph-extension}.
For example, if $\gamma(X)$ is finite and $T_0$ is topological transitive, then Theorem B holds for any adequately embedded Gibbs-Markov structure with exponentially decaying inducing time (cf. Theorem \ref{theo:Schreier - embedded GM structure}).
Provided that the base map is uniformly expanding, an application of 
Theorem \ref{theo:Schreier - embedded GM structure} then allows to obtain an amenability criterion for the Schreier graph through extensions with non-Markovian increments.
 
\medskip
\noindent\textbf{Theorem C}
(cf. Theorem \ref{theo:Non-Markovian} and Remark \ref{remark:local-diffeo})\textbf{.}
\textit{Assume that $X$ is a connected Riemannian manifold, that $\theta$ is a surjective and $C^2$-local diffeomorphism with $\|D(\theta)^{-1}\| < 1$, and that
   $H$ is a subgroup of the finitely generated  discrete group $G$. Furthermore, assume that $\gamma : M \to G$ is a map such that the following holds.
\begin{enumerate}
\item The image $\gamma(X)$ of $\gamma$ is finite and the interiors of $\gamma^{-1}(\{g\})$ are non-empty for $g \in \gamma(X)$.
\item For all open subsets $U,V \subset X$ and $g \in G$, there exist $n \in \N$ and $x \in X$ such that $x \in U \cap \theta^{-n}(V) \neq \emptyset$ and $(\gamma_x \cdots \gamma_{\theta^{n-1}(x)}) g^{-1} \in \bigcap_{h \in G} hHh^{-1}$.
\item The set $\bigcup_{n \geq 0} \bigcup_{g \in \gamma(X)} \theta^n(\partial(\gamma^{-1}(\{g\})) )$ is not dense.
\item We have that
 $ \limsup_{n \to \infty} \sqrt[n]{ \mathrm{Leb}\left(\left\{ x \in X   :  \gamma_x \cdots \gamma_{\theta^{n-1}(x)} \in H  \right\}\right) } =1$.
\end{enumerate}
Then the Schreier graph of $H$ with respect to $\mathfrak{g}  = \gamma(M)$ is amenable.}\\

The remaining application to Schreier graphs in Section \ref{subsec:kleinian} is of more classical flavour. In there, the above theory is applied to non-regular covers of a class of geometrically finite hyperbolic manifolds. In case of surfaces, the main result of this paragraph is as follows.

\medskip
\noindent\textbf{Theorem D} (cf. Theorem \ref{theo-fuchsian})\textbf{.}
\textit{Assume that $\mathbb{H}/G$ is a  geometrically finite hyperbolic surface and that $H$ is a subgroup of $G$ such that $H_0$ is non-trivial. Then the Schreier graph of $H$ is amenable if and only if $\delta(G) = \delta(H)$, where $\delta(G)$ and $\delta(H)$ refer to the abscissas of convergence of $G$ and $H$, respectively.}
\medskip

We remark that Theorem D is a special case of a recent result by Coulon, Dougall, Shapira and Tapie in \cite{Coulon-Dougall-Tapie--Twisted-Patterson-sullivan-Measures-And--PA2018} which was obtained in a purely geometric context using a twisted Patterson-Sullivan measure,  which establishes
a connection to unitary, positive representations. However, we would like to point out that our method in here is different  and that the result provides an example with an inducing scheme without exponential  tails (see Remark \ref{rem:non-necessary}).

As a last application, we establish a connection to random walks on graphs which allows to compare Theorem A for not necessarily independent increments with the classical results by Day and Gerl for the independent case.

\section{Markov maps, graph extensions and amenability}
We begin with recalling the definition of Markov maps (or Markov fibred systems) from \cite{Aaronson-Denker-Urbanski--Ergodic-Theory-For-Markov--TMS1993} (see also \cite{Aaronson--An-Introduction-To-Infinite--1997}). 
\begin{defn}\label{def:Markov_map} Suppose that $(X,\mathcal{B},\mu)$ is a standard probability space and $\alpha$ is an at most countable partition of $X$ into measurable sets of strictly positive measure. We refer to $(X,\theta,\mu,\alpha)$ as a {Markov map} if, for all $a,b \in \alpha$, 
\begin{enumerate}
\item  $\theta|_a : a \to \theta(a)$ is invertible, bimeasurable and non-singular,  
\item  either $\mu(a \cap \theta(b)) = 0$ or $\mu(a \cap (\theta(b))^\mathbf{c})=0$, 
\end{enumerate}
and, for $\alpha_n:= \left\{ a_1 \cap \theta^{-1}a_2 \cdots \cap \theta^{n-1}a_{n} : a_i \in \alpha,  i=1, \ldots, n \right\}$, the $\sigma$-algebra generated by $\{\alpha_n:n>0\}$ is equal to $\mathcal{B}$ up to sets of measure $0$.
\end{defn}
Note that each Markov map comes with an associated topological Markov chain. This object, whose construction we recall now, is an effective tool for handling the preimage structure. Set $\cW^1 := \alpha$, and for $w_i \in \cW^1$ ($i=1,\ldots,n$) we say that  $w = (w_1 \ldots w_n)$ is an \emph{admissible word of length} $n$ if $\theta(w_i) \supset w_{i+1}$ for $i=1, \ldots, n-1$.
The {set of admissible words of length $n$} will be denoted by $\cW^n$, the length of $w \in \cW^n$ by $|w|$ and the set of all admissible words by $\cW^\infty = \bigcup_n \cW^n$. 
As it easily can be verified,
\begin{equation}\label{eq:correspondence_cylinders_words} \cW^n \to \alpha_n,\quad  (w_1 \ldots w_n) \mapsto [w_1 \ldots w_n] := \bigcap_{k=1}^{n} \theta^{-k+1}(w_k) \end{equation}
defines a bijection between $\cW^n$ and $\alpha_n$.
Each $w \in \cW^n$ can be identified with an inverse branch of $\theta^n$ as follows. Since  $\theta^n$ maps ${[w]}$ injectively onto its image, its inverse $\tau_w: \theta^n([w]) \to [w]$ is well defined and by \emph{(i)} in Definition \ref{def:Markov_map}, 
\[0 <  \varphi_w(x) := \frac{d \mu \circ \tau_w}{d \mu}(x) < \infty \]   
for $\mu$-a.e. $x \in \theta^n([w])$.
The associated topological Markov chain is defined by $(\Sig,\tilde\theta)$, with 
\[\Sig := \left\{(w_1w_2 \ldots) : w_kw_{k+1} \hbox{ admissible for } k=1,2, \ldots \right\}\]  
and $\tilde\theta$ referring to the left shift. The relevance of this object is twofold. First, the identification in (\ref{eq:correspondence_cylinders_words}) gives rise to a measure $\tilde{\mu}$ on $\Sig$ by setting $\tilde{\mu}(\{(w_1\ldots w_n v_1 v_2 \ldots) \in \Sig: v_i \in \cW^1\}) := \mu([w_1\ldots w_n])$. By an argument based on the last condition in Definition \ref{def:Markov_map}, it is then easy to construct a measure theoretic bijection between $(\Sig, \tilde{\mu})$ and $(X,\mu)$ such that $\tilde\theta$ and $\theta$ commute. For ease of notation, we will make use of $\theta$ 
and $\mu$, for $\tilde\theta$ and $\tilde\mu$, respectively.

Furthermore, $\Sig$ comes with a canonical topology generated by $\{[w] : w \in \cW^\infty\}$ which coincides with the topology induced by the metric  $d_r$ defined by, for any $r \in (0,1)$, 
\begin{equation}\label{def: d_r metric}
  d_r((x_i),(y_i)) := r^{\min\{i: x_i \neq y_i\}}.
\end{equation}
This metric will play a crucial role for the definition of invariant function spaces. Furthermore, it gives rise to topological irreducibility conditions. We will refer to $(X,\theta,\mu,\alpha)$ as a \emph{topologically transitive} Markov map 
 if for all $a,b \in \alpha$, there exists $n_{a,b}\in \N$ such that  $\mu(\te^{n_{a,b}}(a)\cap b)>0$ and 
 as \emph{topologically mixing} if for all $a,b \in \alpha$, there exists $N_{a,b}\in \N$ such that $\mu(\te^{n}(a)\cap b)>0$ for all $n \geq N_{a,b}$.
  
\begin{defn}  $(X,\theta,\mu,\alpha)$ is a Gibbs-Markov map with full branches
if 
\begin{enumerate}\label{def:gm-map}
\item $\theta([w])=X$ mod $\mu$, for all $w \in \cW^1$, and 
\item there exists $C>0 $, $r \in (0,1)$ such that, for all $w \in \cW^\infty$ and a.e. $x,y \in X$,
\[ \left|\log{\varphi_w(x)} - \log{\varphi_w(y)}\right| \leq C d_r(x,y).\] 
\end{enumerate}
\end{defn}
Furthermore, for each Gibbs-Markov map with full branches, there always exists an equivalent, $\theta$-invariant probability measure $\nu$ such that $\log d\nu/d\mu$ satisfies the above Hölder property and  $(X,\theta,\nu,\alpha)$ is a Gibbs-Markov map with full branches (see \cite{Aaronson--An-Introduction-To-Infinite--1997}). 
However, only some Markov maps have this property, even though in many cases, there is an embedded Gibbs-Markov map. The following definition makes this remark precise. 
\begin{defn}\label{def:embedded Gibbs-Markov structure}
We refer to $(X,\theta,\mu,\alpha)$ as a Markov map with embedded Gibbs-Markov structure $(\Omega,\beta,\eta)$ if $\Omega \subset X$ and there exist $\beta \subset \cW^\infty$ and $\eta:\beta \to \N$  such that  
\begin{enumerate}
\item $\Omega$ is a finite union of elements of $\alpha$ and the set $\left\{[u]: u \in \beta\right\}$ is a partition of\; $\Omega$ mod $\mu$, 
\item the Markov map $(\Omega,\sigma,\nu,\beta)$ defined by $\sigma(x) := \theta^{\eta(u)}(x)$ for all $x \in [u]$ and $u \in \beta$ is a Gibbs-Markov map with full branches, where $\nu:=\left(\mu(\Omega)\right)^{-1}\mu{|_\Omega}$.
\end{enumerate}
Furthermore, we refer to the embedding as adequate if 
\begin{enumerate}
\item there exists a sequence $(C_n)$ such that $\lim_n C_n/n =0$ and
\[| \log{\varphi_{w}(x)} - \log {\varphi_{w}(y)})|  \leq C_n d_\sigma(x,y),\]
for all $[w] \in \alpha_{n}$ and $a \in \alpha$ and $x,y \in [a]$ such that $[wa]\neq  \emptyset$ and  $[w], [a]  \subset \Omega$ (in particular, $\theta^n([wa]) = [a] \subset \Omega$). In here, $d_\sigma$ refers to the metric with respect to $(\Omega,\sigma)$.  
\item there exists an almost surely finite function $\eta^\dagger:\Omega \to \N$ such that, for almost every $x \in \Om$ and $l=0, \ldots , \eta(x)-1$ with $\theta^l(x)\in \Om$, we have that $\eta(x)- l \leq \eta^\dagger(\theta^l(x))$. 
\end{enumerate}
\end{defn}
\begin{remark} \label{remark:embedded vs immersed}
These rather technical definitions are motivated by dynamical systems who admit a tower construction with Markov partition, and the  
the significance of adequatly embedded Gibbs-Markov systems will become visible in Proposition \ref{prop:pressure_T_vs_S} where the decay of return probabilities of extensions of the original and the embedded system are compared. Moreover, it is worth noting that the first condition of adequacy is related and motivated by the notions of weak Gibbs measures and medium variation (see \cite{Yuri--Thermodynamic-Formalism-For-Certain--ETDS1999,Kessebohmer--Large-Deviation-For-Weak--N2001}).
The tower constructions we have in mind range from first return maps, jump transformations and Schweiger collections (see \cite{Aaronson-Denker-Urbanski--Ergodic-Theory-For-Markov--TMS1993}) to Young towers and Pinheiro's general construction for expanding measures (\cite{Pinheiro--Expanding-Measures--AIHPANL2011}). 

For example, if $\sigma$ is the first return to a cylinder, then (i) in the definition of an adequate embedding follows from the Gibbs-Markov property of $\sigma$  with $C_n=C$, and (ii) always holds for $\eta^\dagger=\eta$ as $\eta(x) = l + \eta(\theta^l x)$ 
for first return times. In particular, if $\eta$ is the first return, then the embedding automatically is adequate.
In the context of hyperbolic and zooming times as defined in \cite[Def. 5.5]{Pinheiro--Expanding-Measures--AIHPANL2011}, the first condition  might not be always satisfied, but the  second condition holds as above for $\eta^\dagger=\eta$ by the construction of hyperbolic and zooming times through nested sets.

 However, it is sometimes advantageous to consider situations where $\eta^\dagger \geq \eta$, as, e.g., in the proof of Theorem \ref{theo:main theorem - embedded GM structure} or by considering a constant jump time, that is $\sigma = \theta^n$ for some $n \in \mathbb{N}$. In conclusion, it is worth noting that (ii) is a mild and technical condition, even though the property does not always hold. For example, if 
\[X = \Omega =  \{0,1\}^\mathbb{N}, \quad \beta :=\left\{ [0_n 1 w] : n\in \mathbb{N}  \cup \{0\},  w \in \cW^n \right\}, \]
where $0_n$ is the word $(00\ldots 0)$ of length $n$, then $\beta$ is a partition of $\Omega$ modulo the $(1/2,1/2)$-Bernoulli measure on $X$ and $\eta|_{[0_n 1 w ]} := 2n +1$ defines an embedded Gibbs-Markov map on $\Omega$. By construction, for any $x\in [0_n 1]$, we have $\eta(x) - (n+1) = n$. On the other hand, as $\theta^{n+1} ([0_n 1]) = \Omega$, the only function $\eta^\dagger$ with $\eta(x)- (n+1) \leq \eta^\dagger(\theta^{n+1}(x))$  is the constant function $\eta^\dagger = \infty$.

\end{remark}

\subsection{Extensions by graphs}
A (directed) graph is an ordered pair $\G = (\V,\E)$, where $\V$ is an at most countable set of \emph{vertices} and $\E \subset \V \times \V$ the set of  \emph{(directed) edges}. An edge $e= (g_1,g_2)\in \E$ might be seen as a link between the vertices $g_1,g_2 \in \V$. In this context, $s(e):=g_1$ is called the source and $t(e):=g_2$ is the target of $e$. This notion gives rise to the notions of paths and loops in $\G$. That is, $p:=(e_1 e_2\ldots e_n) \in \E^n$ is called a \emph{path of length $n$} from $s(p):=s(e_1)$ to $t(p):=t(e_n)$ if $t(e_i) = s(e_{i+1})$ for all $i =1,\ldots,n-1$. If $s(p)=t(p)$, then $p$ is called a \emph{loop}.

\begin{defn}\label{defn:graph_extension} Suppose that $(X,\te,\mu,\alpha)$ is a {Markov map} and $\G = (\V,\E)$ a graph. We refer to $\kappa:  X \times \V \to \V$, $(x,g) \mapsto \kappa_x(g)$ as a nearest neighbour cocycle if 
for all $x \in X$ and $g \in \V$, $\kappa_x : \V \to \V$ is a bijection,  $(g, \kappa_x(g)) \in \E$, and $\kappa_x$ is constant on cylinders, that is $\kappa_x = \kappa_y$ for all $x,y \in [v]$ and all $v \in \cW^1$.
The  extension $(Y,T,\kappa)$ by $\G$ through the nearest neighbour cocycle $\kappa$ is defined by, with $Y:= X \times \V$,
\[ T:  Y \to Y, \; (x,g) \mapsto (\te x, \kappa_x(g)).\]
\end{defn}
 The space $Y$ is equipped with the canonical measure $\mu \otimes m_{\V}$, where $m_{\V}$ denotes the  counting measure on $\V$. Furthermore,  for $n \in \N$ and $v \in \cW^n$ and $x \in [v]$, we will write
\[ \kappa_v \equiv \kappa^n_x := \kappa_{\theta^{n-1}(x)} \circ \kappa_{\theta^{n-2}(x)} \circ \cdots \circ \kappa_{x}.  \]
Observe that an extension of a Markov map by a graph implicitly defines a coloring of $\E$ by referring to $v \in \cW$ as the color of $(g,\kappa_x (g)) \in \E$, where $x \in [v]$. As fundamental concept in the proof of our main results is the existence of loops who might be chosen uniformly with respect to the coloring.

\begin{defn}\label{defn:graph_extension_uniform_loops}
The extension $(Y,T,\kappa)$ of a  Markov map  $(X,\theta,\mu,\alpha)$  with full branches has uniform loops if there exists a finite set $\mathcal{J} \subset X$ with $v \in \{ \kappa_x(v) : x \in \mathcal{J}\}$ for all $v \in \V$.
\end{defn}

\subsection{Amenability}

Before introducing amenability of a weighted graph, we recall the definition by Gerl (\cite{Gerl--Random-Walks-On-Graphs--JTP1988}, see also \cite{Gerl--Amenable-Groups-And-Amenable--1988}). 
Assume that $\G=(\V,\E)$ is a graph such that there exists a uniform bound on the number of adjacent edges of a vertex. For $K \subset \V$, $|K|< \infty$, the boundary of $K$ defined by
\[  \partial K:= \{ v \in K: \exists e \in \E \hbox{ s.t. } s(e) = v, t(e) \notin K   \} \] 
is then always  finite. The graph $\G$ is referred to as an \emph{amenable graph} if 
\[\inf\left\{   \frac{|\partial K|}{|K|}: {K \subset \V, |K|< \infty} \right\} =0.\] 
Since we also want to consider graphs who might contain vertices with infinitely many adjacent edges, we introduce the notion of amenability for weighted graphs. We refer to a graph $\G = (\V,\E)$  as a \emph{weighted graph} with weight $p:E\to [0,1] $ if for all $v \in \V$, we have $ \sum_{e:s(e)=v} p(e) =1$. This then gives rise to the following boundary definition. For $\epsilon >0$ and $K \subset \V$,   the $\epsilon$-boundary of $K$ is defined by 
\[  \partial^\epsilon K:= \{ v \in K: \exists e \in \E \hbox{ s.t. } s(e) =v, t(e) \notin K, p(e) > \epsilon) \}. \]  
\begin{defn} The weighted graph
$\G=(\V,\E)$ with weight $p$ is $p$-amenable if 
\[ \lim_{\epsilon \to 0} \inf\left\{   \frac{|\partial^\epsilon K|}{|K|}: {K \subset \V, |K|< \infty} \right\} =0. \]
\end{defn}
Observe that there is the following, equivalent definition. For a weighted graph $\mathcal{G}$ and $\epsilon >0$, let $\mathcal{G}_\epsilon =(\V,\E_\epsilon)$ be the graph with the same set of vertices $\V$ and edges $\E_\epsilon := \{e : p(e)> \epsilon\}$. We then have that $\G$ is an amenable weighted graph if and only if $\mathcal{G}_\epsilon$ is an amenable graph for all $\epsilon >0$. 
In particular, if $p$ is uniformly bounded away from $0$, then the  number of adjacent edges of a vertex is uniformly bounded from above and the notions of amenability for graphs and weighted graphs coincide. Furthermore, it is worth observing that by construction, edges with zero weight are irrelevant for $p$-amenability and might be removed from the graph without changing $p$-amenability.  

In the context of an extension of a Markov map with $\mu(X)=1$, we refer to the weight 
defined by $p(e):= \mu\{ x \in \Sig: \kappa_x(s(e))=t(e) \}$ as 
the \emph{canonical weight}. Moreover, if $\G$ is amenable with respect to this weight, we refer to  $\G$ as \emph{$\mu$-amenable}.
 Note that, if $|\cW^1|<\infty$ and if for each $e \in \E$, there exists $x \in X$ with $\kappa_x(s(e)) = t(e)$, then the notions of amenability for graphs and graphs with respect to the canonical weight  coincide since $p(e)$ is uniformly bounded away from $0$. 
We now show that amenability in fact only depends on $\kappa$. In order to do so, we use the idea of F\o lner sequences.
That is, we refer to a sequence $(K_n)$ of finite subsets of\/ $\V$ as a \emph{F\o lner sequence} with respect to $\kappa$ (or $\kappa$-F\o lner sequence) if, for all $v \in \cW^\infty$, 
\[ \lim_{n \to \infty} \frac{| \kappa_v(K_n)\setminus K_n|}{ | K_n|} =0.\] 
%
%
\begin{proposition} \label{prop:mu-amenable}
Suppose that $(Y,T,\kappa)$ is a graph extension of a topologically transitive Markov map $(X,T,\mu,\alpha)$.
Then $\G$ is $\mu$-amenable if and only if there exists a $\kappa$-F\o lner sequence. 
\end{proposition}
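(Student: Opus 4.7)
The plan is to prove both implications separately using the observation that under the canonical weight, both notions reduce to statements about the length-one fibre maps $\{\kappa_w: w\in\cW^1\}$. Since $\kappa_x$ is constant on each cylinder $[w]$ with $w\in\cW^1$, the weight decomposes as $p((v,u))=\sum_{w:\kappa_w(v)=u}\mu([w])$, and in particular $p((v,\kappa_w(v)))\geq\mu([w])>0$ for every $w\in\cW^1$. The Følner condition for arbitrary words $v\in\cW^\infty$ will be obtained from this length-one information through a path-tracing argument, while the reverse implication will rest on a flux estimate together with dominated convergence.

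For the implication ``Følner implies $\mu$-amenable'', I first observe that for any finite $K\subset\V$ and $\epsilon>0$, each $v\in\partial^\epsilon K$ witnesses at least one outgoing edge of weight $>\epsilon$ pointing outside $K$. Markov's inequality together with the cylinder decomposition of $p$ then yields the flux bound
\[ \epsilon\cdot|\partial^\epsilon K| \;\leq\; \sum_{v\in K,\, u\notin K} p((v,u)) \;=\; \sum_{w\in\cW^1}\mu([w])\,|\kappa_w(K)\setminus K|. \]
Given a $\kappa$-Følner sequence $(K_n)$ and a fixed $\epsilon>0$, each summand of the normalised right-hand side tends to $0$ pointwise in $w$ while remaining bounded by $\mu([w])$; since $\sum_{w\in\cW^1}\mu([w])=1$, dominated convergence forces $|\partial^\epsilon K_n|/|K_n|\to 0$. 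As this holds for every $\epsilon>0$, the infimum in the definition of $\mu$-amenability vanishes.

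For the converse, I use path-tracing combined with a diagonal choice. Fix $v=v_1\cdots v_n\in\cW^n$ and set $\epsilon(v):=\min_{j\leq n}\mu([v_j])>0$. If $g\in K$ with $\kappa_v(g)\notin K$, the orbit $g_j:=\kappa_{v_j}\circ\cdots\circ\kappa_{v_1}(g)$ must first leave $K$ at some index $j^\ast$, and then the edge $(g_{j^\ast-1},g_{j^\ast})$ has $p$-weight at least $\mu([v_{j^\ast}])\geq\epsilon(v)$, placing $g_{j^\ast-1}$ in $\partial^{\epsilon'}K$ for every $\epsilon'<\epsilon(v)$. Bijectivity of the partial compositions and of $\kappa_v$ converts this into the path estimate
\[ |\kappa_v(K)\setminus K| \;\leq\; |v|\cdot|\partial^{\epsilon'}K|. \]
Enumerating $\cW^\infty=\{w^{(1)},w^{(2)},\ldots\}$ and setting $n_m:=\max_{i\leq m}|w^{(i)}|$, I will pick $\epsilon_m<\min_{i\leq m}\epsilon(w^{(i)})$ and, by $\mu$-amenability, a finite $K_m\subset\V$ with $|\partial^{\epsilon_m}K_m|\leq|K_m|/(mn_m)$. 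The path estimate then delivers $|\kappa_{w^{(i)}}(K_m)\setminus K_m|/|K_m|\leq 1/m$ for every $i\leq m$, showing that $(K_m)$ is $\kappa$-Følner.

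The main technical point is the simultaneous control of countably many $\kappa_v$ in the converse direction: the diagonal choice above is only possible because $\mu([w])>0$ for every $w\in\cW^1$, which is guaranteed by the definition of a Markov map. In the forward direction the corresponding subtlety is handling the infinite weighted sum, where finiteness of $\sum_{w\in\cW^1}\mu([w])=1$ is precisely what renders dominated convergence applicable. Topological transitivity plays no role in the argument itself and is listed only for consistency with the remainder of the section.
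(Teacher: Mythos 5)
Your proposal is correct, and both directions are carried out by a route that differs in its mechanics from the paper's. For ``F\o lner implies $\mu$-amenable'', the paper identifies $\partial^\epsilon K_n$ with the set of $g\in K_n$ moved out of $K_n$ by some single letter $v\in\cW^1$ with $\mu([v])>\epsilon$ and then bounds by the resulting finite sum; your flux inequality $\epsilon\,|\partial^\epsilon K|\leq \sum_{w\in\cW^1}\mu([w])\,|\kappa_w(K)\setminus K|$ followed by dominated convergence reaches the same conclusion, and it is in fact slightly more robust: it also covers edges whose weight exceeds $\epsilon$ only as a sum of many cylinder measures each $\leq\epsilon$, a case in which the paper's displayed ``equality'' for $\partial^\epsilon K_n$ is really only an inclusion. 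For the converse, the paper first extracts the Folner property for letters from sets $F_{1/n}$ with small $1/n$-boundary and then bootstraps to all of $\cW^\infty$ via the subadditivity $|\kappa_{uv}(K)\setminus K|\leq|\kappa_u(K)\setminus K|+|\kappa_v(K)\setminus K|$ and induction on word length; you instead prove the direct first-exit estimate $|\kappa_v(K)\setminus K|\leq |v|\cdot|\partial^{\epsilon'}K|$ for $\epsilon'<\min_j\mu([v_j])$ and handle all words at once by an enumeration of $\cW^\infty$ with a diagonal choice of $(\epsilon_m,K_m)$. The content is essentially equivalent (your path-tracing is an unrolled form of the paper's induction), but your version yields an explicit quantitative bound per word, while the paper's is shorter and avoids the enumeration. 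Your closing remark is also accurate: positivity of $\mu([v])$ for $v\in\cW^1$ is already part of the definition of a Markov map, so topological transitivity is not actually used in either argument, including the paper's.
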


\begin{proof} We begin with the proof of the existence of a $\kappa$-F\o lner sequence. In order to do so, observe that, for $\epsilon >0$, we have $ |\partial^\epsilon K|  \geq |\kappa_v(K) \setminus  K| $
for each  $K \subset \V$ and  $v \in \cW^\infty$ with  $ \mu([v]) > \epsilon$. 
Hence, if $\G$ is $\mu$-amenable, then there exists $F_\epsilon\subset \V$ finite such that  
\[ {| \kappa_v(F_\epsilon)\setminus F_\epsilon|} \leq { |F_\epsilon|} \,\epsilon \quad \forall v \in \cW^1 \hbox{ with } \mu([v]) > \epsilon. \]
Set $K_n:= F_{1/n}$. As $\mu([v])>0$ for all $v \in \cW^1$ by topological transitivity, it follows that $\lim_n  | \kappa_v(K_n)\setminus K_n)|/| K_n |=0$ for all $v \in \cW^1$. We now proof by induction that  $(K_n)$ is a $\kappa$-F\o lner sequence. If the above property holds for finite words $u,v \in \cW^\infty$ and if  $w:=uv \in  \cW^\infty$,  then
\begin{align*}
 \frac{|\kappa_{w}(K_n)\setminus K_n|}{| K_n|} & \leq  \frac{|\kappa_{uv}(K_n)\setminus \kappa_v(K_n)|}{| K_n|} + \frac{|\kappa_{v}(K_n)\setminus K_n|}{| K_n|} \\
& \leq  \frac{|\kappa_{v}(\kappa_u(K_n)\setminus  K_n)|}{| K_n|} + \frac{|\kappa_{v}(K_n)\setminus K_n|}{| K_n|} \\
 & =  \frac{|\kappa_u(K_n)\setminus  K_n|}{|K_n|} + \frac{|\kappa_{v}(K_n)\setminus K_n|}{|K_n|}  \xrightarrow{n\to \infty} 0.
\end{align*}
Hence, by induction, $(K_n)$ is $\kappa$-F\o lner sequence. On the other hand, if $(K_n)$ is $\kappa$-F\o lner sequence, then  
\begin{align*}
 \frac{|\partial^\epsilon K_n|}{|K_n|} &= 
  \frac{\left|\left\{   g \in K_n: \exists v \in \cW^1 \hbox{ with } \mu([v])>\epsilon, \kappa_v(g) \notin K_n \right\}\right|}{|K_n|}
 \\
&   \leq \sum_{v \in \cW^1:  \mu([v])>\epsilon} \frac{\left|\kappa_{v}(K_n)\setminus K_n\right|}{| K_n|}
  \xrightarrow{n\to \infty} 0.
\end{align*}
Thus, $\G$ is $\mu$-amenable.
\end{proof}
\begin{remark} F\o lner's classical condition is given in terms of the symmetric difference of sets defined by  $A \triangle  B := (A\setminus B) \cup  (B\setminus A)$. In order to compare the definition above with  F\o lner's, first observe that for finite sets of the same cardinality, we always have that $ |A\setminus B| =|A| - |A\cap B| = |B\setminus A|$. As $\kappa_v$ is always a bijection on the set of vertices, it hence follows that  $(K_n)$ is a {F\o lner sequence} as defined above if and only if, for all $v$, 
\[ \lim_{n \to \infty} \frac{| \kappa_v(K_n)\triangle K_n|}{ | K_n|} =0.\] 
In particular, if $\kappa$ defines a topologically transitive extension by a Cayley graph of a finitely generated discrete group $G$, the notions of $\mu$-amenability for graphs and the classical notion of amenability of groups coincide. 
\end{remark}

\section{Spectral radius and amenability for full Markov maps}
We now relate amenability with the spectral radius  
for extensions of Gibbs-Markov maps with full branches, following closely  ideas in \cite{Stadlbauer--An-Extension-Of-Kestens--AM2013, Jaerisch--Group-Extended-Markov-Systems--PMS2015}. Therefore, we assume throughout  this section that $(X,\theta,\mu,\alpha)$ is a full branched {Gibbs-Markov map} with invariant probability $\mu$
which already satisfies the \emph{uniform loop property} as defined above.
We start with preparatory estimates based on these uniform loops for the norm $\| \cdot \|_2$ on $\ell^2(\V)$. 
In order to do so, for $f : \V \to \R$ and $w \in \cW^n$, set $f_w := f \circ \kappa_w^{-1}$. As $\kappa_w$ is a bijection, it follows that  $\|f\|_2 = \|f_w\|_2$  for all $f \in \ell^2(\V)$. 
\begin{lemma}\label{lemma:normdrop} Suppose that $T$ has uniform loops and let $f \in \ell^2(\V)$ with $\|f\|_2=1$. Then, 
\begin{equation}\label{eq:normdop}
\left\|  f-{\textstyle \sum_{u \in \mathcal{J}} f_u}  \right\|_2 \leq  \#\mathcal{J} - 1 .
\end{equation}
\end{lemma}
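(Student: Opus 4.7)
The plan is to exploit the uniform loop property to partition $\V$ into pieces on which $f$ coincides with one of the translates $f_u$, so that the difference $f - \sum_u f_u$ has a short, explicit expression on each piece.

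More concretely, for each $v \in \V$, the uniform loop property guarantees at least one $u \in \mathcal{J}$ with $\kappa_u(v) = v$; fix such a choice $u(v)$, and set $V_{u^*} := \{v \in \V : u(v) = u^*\}$ for each $u^* \in \mathcal{J}$. Then $\{V_{u^*} : u^* \in \mathcal{J}\}$ is a partition of $\V$, and for $v \in V_{u^*}$ we have $f_{u^*}(v) = f(\kappa_{u^*}^{-1}(v)) = f(v)$, so
\[
\Bigl(f - \sum_{u \in \mathcal{J}} f_u\Bigr)(v) \;=\; -\sum_{u \neq u^*} f_u(v), \qquad v \in V_{u^*}.
\]
I would then square and sum in two stages: applying the inequality $(a_1 + \cdots + a_{N-1})^2 \leq (N-1)(a_1^2 + \cdots + a_{N-1}^2)$ with $N := \#\mathcal{J}$, this yields
\[
\Bigl\|f - \sum_{u \in \mathcal{J}} f_u \Bigr\|_2^2 \;\leq\; (N-1)\sum_{u^* \in \mathcal{J}} \sum_{u \neq u^*}\, \sum_{v \in V_{u^*}} |f_u(v)|^2.
\]

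The next step is to swap the order of the two outer sums and use that $\kappa_u$ is a bijection of $\V$, which implies $\|f_u\|_2 = \|f\|_2 = 1$. Indeed, for fixed $u$,
\[
\sum_{u^* \neq u} \sum_{v \in V_{u^*}} |f_u(v)|^2 \;=\; \|f_u\|_2^2 - \sum_{v \in V_u} |f_u(v)|^2 \;=\; 1 - \sum_{v \in V_u} |f(v)|^2,
\]
since on $V_u$ we have $f_u = f$ by construction. Summing over $u \in \mathcal{J}$ and using that the $V_u$ partition $\V$, the final terms telescope to $N - \|f\|_2^2 = N - 1$, giving
\[
\Bigl\|f - \sum_{u \in \mathcal{J}} f_u \Bigr\|_2^2 \;\leq\; (N-1)\bigl(N - 1\bigr) \;=\; (N-1)^2,
\]
which is the desired bound.

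The argument is essentially a bookkeeping exercise, so I do not expect a real obstacle; the only delicate point is making a consistent choice of $u(v)$ when several elements of $\mathcal{J}$ fix $v$, which is handled by an arbitrary choice function. The sharpness of the inequality $(N-1)^2$ (as opposed to the weaker $N(N-1)$ one would get from a naive triangle-inequality bound) comes precisely from the cancellation $\sum_u \sum_{v \in V_u} |f(v)|^2 = \|f\|_2^2 = 1$ afforded by the partition.
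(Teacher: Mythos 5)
Your proof is correct and follows essentially the same route as the paper: exploit the uniform loop property to drop the fixing translate at each vertex, apply the pointwise Cauchy--Schwarz bound with factor $\#\mathcal{J}-1$, and then count multiplicities using that each $\kappa_u$ is a bijection. Your bookkeeping via the partition $\{V_{u^*}\}$ even evaluates the remaining double sum exactly as $\#\mathcal{J}-1$, which the paper obtains instead as an upper bound (phrased as Cauchy--Schwarz in $\ell^2(\V\times\Z/n\Z)$ plus the observation that each vertex appears at most $\#\mathcal{J}-1$ times among the non-fixed preimages), so the two arguments coincide up to presentation.
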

\begin{proof} 
Since $T$ has uniform loops,  for each $h\in \V$ there exists $u_0\in \mathcal{J}$ such that $\kappa_{u_0}^{-1}(h)=h$. Let  $ n:= \#\mathcal{J}-1$ and enumerate the  elements  $\kappa^{-1}_u(h)$, $u\in \mathcal{J} \setminus \{u_0\}$,  by  $h_i \in V$,  $i\in \Z/n\Z$.
Hence,  for each $h\in \V$,
\[\left(\sum_{u \in \mathcal{J}} f_u -f\right)(h)=\sum_{i\in \Z/n\Z} f(h_i).\] Consequently, we have
\begin{equation*}
\left\|  f-{\textstyle \sum_{u \in \mathcal{J}} f_u}  \right\|_2^2 =\sum_{h\in \V} \left(\sum_{i\in \Z/n\Z}f(h_i)\right)^2.
\end{equation*}
Define $\tilde{f}\in \ell^2(\V\times \Z/n\Z)$ given by $\tilde{f}(h,i):=f(h_i)$,  for $h\in \V$ and $i\in \Z/n\Z$. Then, 
we have
\begin{equation*}
\sum_{h\in \V} \left(\sum_{i\in \Z/n\Z}f(h_i)\right)^2=\sum_{h\in \V}\sum_{i\in \Z/n\Z} \sum_{k\in \Z/n\Z}\tilde{f}(h,i) \tilde{f}(h,i+k).
\end{equation*}
By the Cauchy-Schwarz inequality in $\ell^2(\V\times \Z/n\Z)$ we have
\begin{align*}
&\quad \sum_{h\in \V}\sum_{i\in \Z/n\Z} \sum_{k\in \Z/n\Z}\tilde{f}(h,i) \tilde{f}(h,i+k) \\
&\le \sum_{k\in \Z/n\Z}\sqrt{\sum_{h\in \V}\sum_{i\in \Z/n\Z}  \tilde{f}(h,i), \tilde{f}(h,i)  \sum_{h\in \V}\sum_{i\in \Z/n\Z} \tilde{f}(h,i+k),\tilde{f}(h,i+k))}\\
& = n \langle \tilde f,\tilde f \rangle. 
\end{align*}
Since each vertex  appears at most $n$-times in the family $(h_i)_{h\in \V, i\in \Z/n\Z}$,
we have 
$\left\langle \tilde{f},\tilde{f}\right\rangle \le 
 n \langle f,f \rangle$,
which completes the proof of the lemma.
\end{proof}
 
The following simple lemma plays a key role for a bound on the spectral radius on $\widehat{T}$ as it allows to replace an estimate from below of a difference by an estimate from above of a sum. 

\begin{lemma}\label{lemma:rotundity} Suppose that $T$ has uniform loops and that there exist $\epsilon>0$, $f \in \ell^2(\V)$ with $\|f\|_2=1$ and $w \in \cW$ such that $\|f - f_w\|_2 > \epsilon$. Then, for $\delta:= 2 - \sqrt{4-\epsilon^2}$, we have 
\begin{equation}\label{eq:local_contraction}
\left\| f_w + {\textstyle \sum_{u \in \mathcal{J}} f_u}  \right\|_2 \leq 1+ \#\mathcal{J} - \delta .
\end{equation}
\end{lemma}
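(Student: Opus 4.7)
The approach uses the Hilbert space structure of $\ell^2(\V)$ together with the previous Lemma \ref{lemma:normdrop} to reduce the bound to a parallelogram-identity calculation. The key observation is that the assumed lower bound $\|f - f_w\|_2 > \epsilon$, combined with $\|f\|_2 = \|f_w\|_2 = 1$ (the latter because $\kappa_w$ is a bijection on $\V$), forces a strict upper bound on $\|f + f_w\|_2$; this is exactly where the ``rotundity'' (i.e., strict convexity of the unit ball) of $\ell^2$ enters quantitatively.

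More concretely, my plan is as follows. First, I would write
\[
f_w + \sum_{u \in \mathcal{J}} f_u = (f + f_w) + \left(\sum_{u \in \mathcal{J}} f_u - f\right)
\]
and apply the triangle inequality to split the norm into two pieces. The second piece is directly controlled by Lemma \ref{lemma:normdrop}, which (using the uniform loop hypothesis) bounds it by $\#\mathcal{J} - 1$.

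For the first piece, I would invoke the parallelogram identity in $\ell^2(\V)$:
\[
\|f - f_w\|_2^2 + \|f + f_w\|_2^2 = 2\|f\|_2^2 + 2\|f_w\|_2^2 = 4.
\]
Combining this with the hypothesis $\|f - f_w\|_2 > \epsilon$ yields
\[
\|f + f_w\|_2 \le \sqrt{4 - \epsilon^2} = 2 - \delta,
\]
with $\delta$ as defined in the statement. Adding the two bounds gives $1 + \#\mathcal{J} - \delta$, as required.

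There is essentially no serious obstacle here: the only subtlety is recognising that the uniform loop property is needed \emph{only} to invoke the previous lemma, while the numerical improvement captured by $\delta$ comes entirely from the Hilbert-space geometry of $\ell^2(\V)$. The choice of $\delta = 2 - \sqrt{4-\epsilon^2}$ in the statement is precisely what makes the two contributions add up cleanly, and in particular justifies why the right-hand side of \eqref{eq:local_contraction} is strictly smaller than the trivial bound $\#\mathcal{J}+1$ one would get from $\|f_w\|_2 = 1$ and Lemma \ref{lemma:normdrop} alone.
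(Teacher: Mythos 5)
Your proof is correct and follows essentially the same route as the paper's: apply the parallelogram identity in $\ell^2(\V)$ together with $\|f\|_2=\|f_w\|_2=1$ to bound $\|f+f_w\|_2\le 2-\delta$, then combine with Lemma~\ref{lemma:normdrop} via the triangle inequality on the decomposition $f_w+\sum_{u\in\mathcal J}f_u=(f+f_w)+\bigl(\sum_{u\in\mathcal J}f_u-f\bigr)$. No discrepancies to flag.
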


\begin{proof} By the parallelogram law in $\ell^2(\V)$, we have
\[ \|f + f_w\|_2^2 \leq 2(\|f\|_2^2 + \|f_w\|_2^2) - \|f - f_w\|_2^2 \leq 4 - \epsilon^2 = (2-\delta)^2. \]
Lemma \ref{lemma:normdrop} then implies that
\[  \left\| f_w + {\textstyle \sum_{u \in \mathcal{J}} f_u}  \right\|_2 \le  \left\| f_w +f  \right\|_2 + \left\|  \left( {\textstyle \sum_{u \in \mathcal{J}} f_u}\right)  -f  \right\|_2  \le 2-\delta+ \#\mathcal{J} - 1= 1+ \#\mathcal{J} - \delta. \]
\end{proof}

After these basic but fundamental considerations, we now focus on the functional analytic properties of the transfer operator. Recall that the  transfer operator  in ergodic theory is defined as the dual of the Koopman operator.
That is, the transfer operator
$\widehat{\te} : L^1(\mu) \to L^1(\mu)$ is defined by
\[ \int \widehat{\te}(f) g d\mu = \int f (g\circ \te) d\mu, \quad \forall f \in L^1(\mu), g \in L^\infty(\mu). \]   
In case of a Markov map, it is well known that the transfer operator can be identified with Ruelle's operator for the potential 
$\varphi=  d\mu/d\mu\circ \te$, that is, for $f_w := f \circ \tau_w$ 
\[\widehat{\te}(f)(x) = \sum_{w \in \cW^1}  \varphi_w(x) f_w(x).  \] 
Observe that $\widehat{\te}(1)=1$ as $\mu$ is $\te$-invariant. The transfer operator $\widehat{T}$ is defined in the same way and, as the product of $\mu$ and the counting measure is $T$-invariant, also satisfies $\widehat{T}(1)=1$, where $1$ this time is the constant function one on $Y$. 
We now introduce the relevant function spaces with respect to the spectrum of $\widehat{T}$. 
In order to do so, set ${X_g}:= \{ (x,g) : x \in X \}$, for $ g \in \V$.  Furthermore, with $\| \cdot \|_p$ referring to the $L_p(X,\mu)$-norm and for $p=1,\infty$, $f:Y \to \R$ measurable, $g \in \V$, set 
\begin{align*} 
 \llbracket f\rrbracket_p:= \sqrt{{\textstyle\sum_{g \in \V} (\|f(\cdot\ ,g)\|_p)^2 } } \quad \hbox{ and } \quad   \cH_p:= \{f: Y \to \R\with \llbracket f \rrbracket_p < \infty \}.
\end{align*}
Furthermore, set $\cH_c := \{f \in \cH_\infty \with f \hbox{ is constant on } {X_g} \forall g \in \V\}$. Define  \[ \Lambda_k := \sup\left(\left\{  \llbracket \widehat{T}^k(f)  \rrbracket_1 /  \llbracket f \rrbracket_1  \with f \geq 0, f\in \cH_c \cap \cH_1 \right\}\right). \] 
By the same arguments as in \cite{Stadlbauer--An-Extension-Of-Kestens--AM2013} and \cite[Lemma 3.2(3)]{Jaerisch--Group-Extended-Markov-Systems--PMS2015}, one obtains the following for the action of $\widehat{T}$ on $\cH_1$ and $\cH_\infty$. 
\begin{proposition}\label{prop:operator_on_cH} The function spaces $(\cH_1, \llbracket \cdot \rrbracket_1)$ and $(\cH_\infty, \llbracket \cdot \rrbracket_\infty)$ are Banach spaces, the operators $\widehat{T}^k: \cH_\infty \to \cH_\infty$ are bounded and there exists $C\geq1$ such that   $\llbracket\widehat{T}^k\rrbracket_\infty \leq C$ for all $k\in \N$.
Furthermore, $\Lambda_k \le 1$ for all $k\in \N$ and $\lim_{k \to \infty} (\Lambda_k)^{1/k} =\rho( \widehat{T})$, with $\rho( \widehat{T})$ referring to the spectral radius of  $\widehat{T}: \cH_\infty \to \cH_\infty$.
\end{proposition}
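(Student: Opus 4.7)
The plan is to handle the four assertions in turn, exploiting the product structure: $\cH_p$ is canonically the $\ell^2$-direct sum of the Banach spaces $L^p({X_g},\mu)$ indexed by $g\in\V$. The completeness of $(\cH_p,\llbracket\cdot\rrbracket_p)$ for $p\in\{1,\infty\}$ is then the routine fact for $\ell^2$-sums of Banach spaces: a Cauchy sequence is componentwise Cauchy, each component converges in $L^p$, and the $\ell^2$-tail bound uniform in $n$ transfers to the limit by Fatou.

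For the uniform operator bound on $\cH_\infty$, I would start from
\[
\widehat{T}^k f(x,v)=\sum_{w\in\cW^k}\varphi_w(x)\,f(\tau_w(x),\kappa_w^{-1}(v))
\]
and extract from the Gibbs property (which by the stated H\"older condition is uniform in the word length) together with the full-branch hypothesis a constant $D\ge 1$ such that $\sup_x\varphi_w(x)\le D\,\mu([w])$ for every $w\in\cW^\infty$. Taking the sup in $x$ inside the sum, applying Cauchy--Schwarz with the probability weights $\mu([w])$, summing over $v$, and using that each $\kappa_w$ is a bijection of $\V$ to change variables yields
\[
\llbracket\widehat{T}^k f\rrbracket_\infty^2\le D^2\sum_{w\in\cW^k}\mu([w])\,\llbracket f\rrbracket_\infty^2=D^2\llbracket f\rrbracket_\infty^2,
\]
so the constant $C:=D$ works independently of $k$. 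The bound $\Lambda_k\le 1$ follows from essentially the same calculation specialised to $f\in\cH_c$ with $f\ge 0$: since $f(\tau_w(x),\cdot)$ no longer depends on $x$, one may pull the integral through the sum and replace the sup of $\varphi_w$ by $\int\varphi_w\,d\mu=\mu([w])$, eliminating the distortion factor $D$ altogether.

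The spectral-radius identity is the subtlest piece and I expect the main obstacle to be the two-sided comparison
\[
\Lambda_k\le\llbracket\widehat{T}^k\rrbracket_\infty\le D\,\Lambda_k,
\]
from which Gelfand's formula will give $\lim_k\Lambda_k^{1/k}=\rho(\widehat{T})$. The lower inequality is immediate: on $\cH_c$ one has $\llbracket f\rrbracket_1=\llbracket f\rrbracket_\infty$ since $\mu$ is a probability measure and each $f(\cdot,g)$ is constant on its fibre, so $\llbracket\widehat{T}^k f\rrbracket_1\le\llbracket\widehat{T}^k f\rrbracket_\infty\le\llbracket\widehat{T}^k\rrbracket_\infty\llbracket f\rrbracket_1$. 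The upper inequality is where the Gibbs property re-enters: given $f\in\cH_\infty$, replace $f$ by $|f|$ and majorise it by the constant-on-fibres function $\tilde f(x,g):=\|f(\cdot,g)\|_\infty\in\cH_c$; positivity of $\widehat{T}^k$ gives $\widehat{T}^k|f|\le\widehat{T}^k\tilde f$ pointwise, and since the fibrewise expression $\widehat{T}^k\tilde f(\cdot,v)=\sum_w\varphi_w(\cdot)\,\tilde f_{\kappa_w^{-1}(v)}$ is a non-negative combination of $\varphi_w$'s, bounded distortion converts its $L^\infty$-norm into its $L^1$-norm up to the factor $D$. Chaining these yields $\llbracket\widehat{T}^k f\rrbracket_\infty\le D\llbracket\widehat{T}^k\tilde f\rrbracket_1\le D\Lambda_k\llbracket\tilde f\rrbracket_1=D\Lambda_k\llbracket f\rrbracket_\infty$, which closes the comparison and concludes the proof.
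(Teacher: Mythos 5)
Your proposal is correct and follows essentially the same route as the paper, which gives no proof of its own but defers to \cite{Stadlbauer--An-Extension-Of-Kestens--AM2013} and \cite[Lemma 3.2]{Jaerisch--Group-Extended-Markov-Systems--PMS2015}: the uniform distortion bound $\sup\varphi_w\le e^{C}\mu([w])$ coming from the full-branch Gibbs--Markov property, Cauchy--Schwarz with the probability weights $\mu([w])$ in the $\ell^2$-sum over $\V$, and the reduction of $\llbracket\widehat{T}^k\rrbracket_\infty$ to $\Lambda_k$ via the fibrewise-constant majorant $\tilde f(x,g)=\|f(\cdot,g)\|_\infty$ are exactly the ingredients used there, after which Gelfand's formula closes the argument. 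The only cosmetic caveats are to read $\sup_x$ as an essential supremum and to note that the two-sided bound $\Lambda_k\le\llbracket\widehat{T}^k\rrbracket_\infty\le e^{C}\Lambda_k$ also yields the existence of the limit $\lim_k\Lambda_k^{1/k}$, as you implicitly use.
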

Hence, by the above, $\widehat{T}$ acts continuously on $\cH_\infty$, by Gelfand's formula for the spectral radius, $\rho(\widehat{T}) \leq 1$ and   $\widehat{T}: \cH_c \to \cH_1$ is continuous with norm smaller than or equal to 1. However, $\widehat{T}(\cH_c) \not\subset \cH_c$ and $\widehat{T}$ does not necessarily act on $\cH_1$ as a bounded operator. Furthermore, by definition as transfer operator, $\widehat{T}$ acts on $L^1(Y,\mu)$ as an isometry and, in particular, the spectral radius on this space always is equal to one. The following theorem relates amenability of $\mathcal{G}$ with the action of $\widehat{T}$ on these spaces.  
 
\begin{theorem} \label{theo:main_result}
Suppose that  $(Y,T,\kappa)$ is a topologically transitive extension of a Gibbs-Markov map  $(X,\te,\mu,\alpha)$ with full branches and invariant probability $\mu$. Moreover, assume that there exists $n \in \N $ such that $T^n$ has uniform loops. Then the following assertions are equivalent.\begin{enumerate}
\item The graph  $\G = (\V,\E)$ is $\mu$-amenable.
\item The spectral radius $\rho(\widehat{T})$ of $\widehat{T}: \cH_\infty \to \cH_\infty$ is equal to 1.
\item For each $\epsilon >0$, there exists  $A \subset \V$ finite such that 
$ \int | \widehat{T}(\1_{X \times A}) - \1_{X \times A} | d \mu \leq \epsilon \cdot \#(A)$.
\item For each $\epsilon >0$, there exists  $A \subset \V$ finite such that 
$   \llbracket \widehat{T}(\1_{X \times A}) - \1_{X \times A} \rrbracket_1 \leq \epsilon \llbracket  \1_{X \times A} \rrbracket_1 $.
\end{enumerate}
\end{theorem}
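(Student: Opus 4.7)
The plan is to establish the cyclic implications (i) $\Rightarrow$ (iv) $\Rightarrow$ (iii) $\Rightarrow$ (ii) $\Rightarrow$ (i). The three F\o lner-type conditions (i), (iii), (iv) will be handled by direct computation with the test functions $\mathbf{1}_{X\times A}\in\cH_c\cap\cH_1$, while the spectral technology of Lemmas \ref{lemma:normdrop}--\ref{lemma:rotundity} is reserved for the final and hardest implication (ii) $\Rightarrow$ (i). The guiding observation is that, on $\cH_c$, identifying $f$ with $\tilde f\in\ell^2(\V)$ via $f(x,g)=\tilde f(g)$, Ruelle's formula gives $\widehat{T}^k f(x,g)=\sum_{w\in\cW^k}\varphi_w(x)\tilde f(\kappa_w^{-1}(g))$; integrating in $x$ yields, for $\tilde f\ge 0$, $\|\widehat{T}^k f(\cdot,g)\|_1=\sum_{w\in\cW^k}\mu([w])\tilde f(\kappa_w^{-1}(g))=:(\Pi_k\tilde f)(g)$. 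By positivity this gives $\Lambda_k=\|\Pi_k\|_{\ell^2(\V)\to\ell^2(\V)}$, and the bounded distortion of the Gibbs-Markov structure supplies the sub-multiplicative bound $\Pi_{n+m}\le C\Pi_n\Pi_m$ on nonnegative functions, so that Proposition \ref{prop:operator_on_cH} encodes $\rho(\widehat T)=\lim_k\Lambda_k^{1/k}$ entirely in the $\ell^2$-behaviour of the family $(\Pi_k)_{k\in\N}$.

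For (i) $\Rightarrow$ (iv) I would fix a $\kappa$-F\o lner sequence $(K_n)$ from Proposition \ref{prop:mu-amenable} and use the identity $\widehat{T}\mathbf{1}=\mathbf{1}$ to write
\[\widehat{T}\mathbf{1}_{X\times A}(x,g)-\mathbf{1}_A(g)=\sum_{w\in\cW^1}\varphi_w(x)\bigl(\mathbf{1}_A(\kappa_w^{-1}(g))-\mathbf{1}_A(g)\bigr).\]
Taking $L^1(\mu)$-norms, applying Cauchy--Schwarz, and exploiting that the indicator differences are $\{0,1\}$-valued yields
\[\llbracket\widehat{T}\mathbf{1}_{X\times A}-\mathbf{1}_{X\times A}\rrbracket_1^2\le 2\sum_{w\in\cW^1}\mu([w])\,|\kappa_w(A)\setminus A|.\]
For $A=K_n$ one splits the right-hand side at a threshold $\eta>0$: the finitely many $w$ with $\mu([w])>\eta$ contribute little by the F\o lner property, and the tail is bounded by $2|A|\sum_{\mu([w])\le\eta}\mu([w])$, which vanishes as $\eta\to 0$ since $\sum_w\mu([w])=1$. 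The same computation without Cauchy--Schwarz gives (i) $\Rightarrow$ (iii) and (iv) $\Rightarrow$ (iii). For (iii) $\Rightarrow$ (i) I would invert the inequality: any $g\in\partial^{\epsilon'}A$ contributes at least $\epsilon'$ to $\|(\widehat T\mathbf{1}_{X\times A}-\mathbf{1}_{X\times A})(\cdot,g)\|_1$, forcing $|\partial^{\epsilon'}A|\le\epsilon|A|/\epsilon'$ and hence amenability. For (iii) $\Rightarrow$ (ii) I would iterate the F\o lner estimate at step $k$: since $(K_n)$ is F\o lner for every $v\in\cW^k$ (by the induction in the proof of Proposition \ref{prop:mu-amenable}), the same bound gives $\Lambda_k\ge 1-\epsilon_k$ with $\epsilon_k\to 0$; combined with $\Lambda_k\le 1$ this forces $\Lambda_k=1$ for every $k$, hence $\rho(\widehat T)=1$.

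The main obstacle is (ii) $\Rightarrow$ (i), which I would prove by contraposition. Assume $\mathcal{G}$ is not $\mu$-amenable, so that there exist $\epsilon_0,\delta_0>0$ with $|\partial^{\epsilon_0}A|\ge\delta_0|A|$ for every nonempty finite $A\subset\V$. For $\tilde f\ge 0$ in $\ell^2(\V)$ with $\|\tilde f\|_2=1$, applying this boundary estimate to the level sets $B_t=\{\tilde f>t\}$ together with the layer-cake identity $\|\tilde f-\tilde f_w\|_2^2=\int_0^\infty 2t\,|B_t\triangle\kappa_w(B_t)|\,dt$ (valid since $\kappa_w$ is a bijection) yields $\sum_{w\in\cW^1}\mu([w])\|\tilde f-\tilde f_w\|_2^2\ge\epsilon_0\delta_0$, so some $w\in\cW^1$ realises $\|\tilde f-\tilde f_w\|_2\ge\eta:=\sqrt{\epsilon_0\delta_0}$. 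Lemma \ref{lemma:rotundity}, applied with the uniform-loop witness set $\mathcal{J}$ of $T^n$, then provides the uniform contraction $\|\tilde f_w+\sum_{u\in\mathcal{J}}\tilde f_u\|_2\le 1+\#\mathcal{J}-\delta$ with $\delta>0$ depending only on $\eta$ and $\#\mathcal{J}$. The crux, which I expect to be the main difficulty, is converting this single $\ell^2$-rotundity into a spectral contraction $\rho(\widehat T)<1$: the word $w$ has length $1$ while the words in $\mathcal{J}$ have length $n$, so the combined estimate does not directly bound a single $\Pi_k$; the required conversion uses the sub-multiplicativity $\Pi_{k+m}\le C\Pi_k\Pi_m$ from bounded distortion together with a careful choice of iterates to absorb the distortion constant, following the scheme developed in \cite{Stadlbauer--An-Extension-Of-Kestens--AM2013, Jaerisch--Group-Extended-Markov-Systems--PMS2015}. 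The resulting inequality $\rho(\widehat T)<1$ contradicts (ii).
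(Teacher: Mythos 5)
Your overall architecture is sound and the easy implications are essentially right, but the proposal leaves the actual core of the theorem unproven. For (ii) $\Rightarrow$ (i) you correctly reduce, via non-amenability and level sets, to the statement that every nonnegative unit vector $\tilde f\in\ell^2(\V)$ is moved by some $w$ in a fixed finite subset of $\cW^1$, and then invoke Lemma \ref{lemma:rotundity}; but the step you yourself flag as the crux --- converting this single-word rotundity into $\rho(\widehat T)<1$ --- is exactly where the paper's work lies (Steps 1--2 of its proof), and ``sub-multiplicativity of $\Pi_k$ plus a careful choice of iterates'' does not describe a working mechanism. What is actually needed is the construction of a set $\cW^\ddagger$ of words of \emph{uniform} length $m+1$, obtained by prefixing the contracting letters with a word $a\in\cW^m$ chosen so that the H\"older distortion is absorbed as in \eqref{eq:absorbing_hoelder_coeficients}, followed by the inductive decay estimate \eqref{eq:decay-along-orbits} along orbit blocks and the binomial-type summation \eqref{eq:estimate-contraction} giving $\llbracket\widehat T^{ln}f\rrbracket_1\le(1-\alpha\delta_3)^n\llbracket f\rrbracket_1$; none of this is supplied or replaced by an alternative. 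Moreover, your remark that ``$w$ has length $1$ while the words in $\mathcal J$ have length $n$'' points at a real obstruction which the paper does not resolve inside the contraction scheme at all: for uniform loops only at time $n\ge 2$ it proves the case $n=1$ first and then runs a separate reduction (Step 7), using the period decomposition $\V=\V_1\cup\dots\cup\V_p$ and an approximation of $\widehat T^k(\1_{X\times A_0})$ by indicators $\1_{X\times A_k}$ to manufacture an almost-invariant function for $\widehat T$ itself. Your proposal contains no substitute for this, so the theorem as stated (uniform loops for some $T^n$) is not covered.

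A second, smaller but genuine gap is the link (iv) $\Rightarrow$ (iii): it does not follow from ``the same computation without Cauchy--Schwarz''. Writing $h(g)=\sum_w\mu([w])\1_{\kappa_w(A)\triangle A}(g)$, condition (iii) is $\|h\|_{\ell^1}\le\epsilon\,\#A$ and (iv) is $\|h\|_{\ell^2}\le\epsilon\sqrt{\#A}$; one has $\|h\|_{\ell^2}^2\le\|h\|_{\ell^1}$ (this is the paper's Step 5, i.e.\ (iii) $\Rightarrow$ (iv)), but the reverse needs control of the support of $h$, which may be far larger than $A$ when $\cW^1$ is infinite. The paper closes the cycle differently, via (iv) $\Rightarrow$ (ii) (an almost-invariant vector in $\cH_1$, compared with $\llbracket\cdot\rrbracket_\infty$ through Lemma \ref{lemmaTn}(i), forces $1$ into the spectrum), and you would need something of that kind. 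Finally, two fixable slips: the layer-cake identity should be applied to $\tilde f^2$ (one gets $\|\tilde f^2-(\tilde f^2)_w\|_1=\int_0^\infty|B_t\triangle\kappa_w(B_t)|\,dt$ for $B_t=\{\tilde f^2>t\}$, and then $\|\tilde f^2-\tilde g^2\|_1\le\|\tilde f-\tilde g\|_2\,\|\tilde f+\tilde g\|_2$), not to $\|\tilde f-\tilde f_w\|_2^2$ as written; and in (iii) $\Rightarrow$ (i) a vertex $g\in\partial^{\epsilon'}A$ witnesses $\kappa_w(g)\notin A$, i.e.\ it enters $|\kappa_w(A)\setminus A|$, whereas $\|(\widehat T\1_{X\times A}-\1_{X\times A})(\cdot,g)\|_1$ counts preimages $\kappa_w^{-1}(g)$; the conclusion survives only after passing to the integrated identity $\int|\widehat T\1_{X\times A}-\1_{X\times A}|\,d\mu\otimes m_\V=\sum_w\mu([w])\,\#(\kappa_w(A)\triangle A)$ as in the paper's Step 4.
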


\begin{proof} We begin with the proof of the theorem for $n=1$, that is $T$ has uniform loops and deduce the general case from this result in Step 7 below. The principal part of the proof is to show
 that $\rho(\widehat{T})=1$ implies that for each $\epsilon >0$, there exists a $A \subset \V$ finite such that $\int | \widehat{T}(\1_{X \times A}) - \1_{X \times A} | d \mu \leq \epsilon \cdot \#(A)$. In order to do so, we first show that, for each $\epsilon >0$, there exists $f \in \cH_c$, $\llbracket f \rrbracket_1 =1$ and $f \geq 0$ such that $\llbracket \widehat{T}(f) - f \rrbracket_1< \epsilon$. In order to prove this by contradiction, let $\mathcal{J}\subset \cW$ be given by the uniform loop property and suppose that 
\begin{equation}  \label{eq:no_almost_eigenfunction} \delta_1 := \inf\left\{\llbracket \widehat{T}(f) - f \rrbracket_1 /\llbracket f \rrbracket_1 \with f \in \cH_c, f\geq 0, f \neq 0 \right\} >0. \end{equation}
\noindent \textsc{Step 1}. We begin with an application of Lemma \ref{lemma:rotundity}, that is, we construct a finite subset $\cW^\ast \subset \cW$  such that, for $f \in \cH_c$ with $\llbracket f \rrbracket_1=1$, there exists $w \in \cW^\ast$ satisfying inequality (\ref{eq:local_contraction}) together with a control of the distortion.  
In order to do so, choose $\cW^\ast \subset \cW^1$, $\mathcal{J} \subset \cW^\ast$ such that $\sum_{w \in \cW \setminus \cW^\ast} \mu([w]) \leq {\delta_1}/{4}$.   
Using $\widehat{T}(1)=1$ and the $\triangle$-inequality then gives 
\begin{align*}
\delta_1 & \leq \llbracket \widehat{T}(f) - f \rrbracket_1 = 
\llbracket \sum_{w \in \cW} \Phi_w (f_w - f) \rrbracket_1 \\
& \leq  \sum_{w \in \cW^\ast} \llbracket \Phi_w (f_w - f) \rrbracket_1 + 
\llbracket \sum_{w \notin \cW^\ast}  \Phi_w f_w \rrbracket_1 +  \llbracket \sum_{w \notin \cW^\ast} \Phi_w f \rrbracket_1  \\
& \leq   \sum_{w \in \cW^\ast} \mu([w]) \llbracket (f_{w} - f) \rrbracket_1 + \delta_1/2.
\end{align*}
 Hence there exists $w \in \cW^\ast$ such that 
$\delta_1/2 \leq  \llbracket (f_{w} - f)  \rrbracket_1 
$. 
Identifying $\cH_c$ with $\ell^2(\V)$,  Lemma \ref{lemma:rotundity} implies  the inequality (\ref{eq:local_contraction}) with respect to 
 $\delta_2:= 2 - \sqrt{4-\delta_1^2}$. 
That is, for each $f \in \cH_c$, $f\geq 0$, there exists $w_f \in \cW^\ast$ with     
\begin{equation*}
\llbracket f_{w_f} + {\textstyle \sum_{u \in \mathcal{J}} f_u}  \rrbracket_1 \leq (1+ \#\mathcal{J} - \delta_2) \llbracket f \rrbracket_1.
\end{equation*}
As $\theta$ has full branches, this gives rise to a uniform estimate with respect to $\cW^\dagger := \cW^\ast \cup \mathcal{J}$ and $K:= \#(\cW^\dagger)$. Namely, for $f \in \cH_c$, $f\geq 0$, we have  
\begin{equation}\label{eq:local_contraction_2}
\llbracket {\textstyle \sum_{u \in \cW^\dagger} f_u}  \rrbracket_1 \leq 
\llbracket f_{w_f} + {\textstyle \sum_{u \in \mathcal{J}} f_u}  \rrbracket_1  + \llbracket {\textstyle \sum_{u \in  \cW^\dagger \setminus (\mathcal{J} \cup \{w_f\})} f_u}  \rrbracket_1
\leq (K - \delta_2) \llbracket f \rrbracket_1 .
\end{equation}
In order to control distortion, note that by Hölder continuity of $ \log \Phi_w$, there exists $m \in \N$ 
such that, for all $x,y \in X$, $a \in \cW^m$, and $w \in \cW^\infty$, 
\begin{equation}
\label{eq:absorbing_hoelder_coeficients}
  \sqrt{ 1- \delta_2/K }  \leq \frac{\Phi_w(\tau_{a}(x))}{\Phi_w(\tau_{a}(y))} \leq \left(  \sqrt{ 1- \delta_2/K }  \right)^{-1}. 
\end{equation} 
We now fix $a \in \cW^m$ and set $\cW^\ddagger := \left\{ (au) \in \cW^{m+1}: u \in \cW^\dagger \right\}$. By \eqref{eq:local_contraction_2}, we have
\begin{equation}\label{eq:local_contraction_pt2}
\left \llbracket  \sum_{v \in \cW^\ddagger} {f}_{v} \right \rrbracket_1 
= \left\llbracket \sum_{u \in \cW^\dagger}  ({f_a})_u  \right\rrbracket_1
\leq (K - \delta_2) \llbracket f \rrbracket_1. 
\end{equation}

\noindent \textsc{Step 2}. We deduce pointwise exponential decay of $\llbracket \widehat{T}^{m+1}(f) \rrbracket_1$, for $f\in \cH_c$,  from (\ref{eq:absorbing_hoelder_coeficients}) and (\ref{eq:local_contraction_pt2}). 
For a given $f \in \cH_c$ and $(k+1)$ finite words $w_i \in \cW^{n_i},\,\,n_i \ge 1$ ($i=0,1,\ldots,k$), we define for  $j=0,1,\ldots,k$, 
\begin{align*}
\cW_{j} := \left\{ (w_0 v_1 w_1 v_2 \ldots v_j w_{j})   \with  v_i \in \cW^\ddagger \hbox{ for } i=1, \ldots, j  \right\}, \quad 
f_{j} := \sum_{w \in \cW_{j}} f_w.
\end{align*} 

Using $\llbracket f_v\rrbracket_1 = \llbracket f\rrbracket_1$ and $f_{vw}=(f_v)_w$  for $v,w\in \cW^\infty$ and $f\in \cH_c$,  it follows inductively from (\ref{eq:local_contraction_pt2})  that

\begin{align} 
\nonumber 
\left\llbracket f_k  \right\rrbracket_1 &= 
\left\llbracket \sum_{w \in \cW_k} f_w \right\rrbracket_1  =  
\left\llbracket \sum_{w \in \cW_{k-1}, v \in \cW^\ddagger} (f_{wv})_{w_k} \right\rrbracket_1   = 
\left\llbracket \sum_{w \in \cW_{k-1}, v \in \cW^\ddagger} f_{wv} \right\rrbracket_1 \\
\label{eq:decay-along-orbits}
& = 
\left\llbracket \sum_{v \in \cW^\ddagger} (f_{k-1})_{v} \right\rrbracket_1 
\leq (K - \delta_2) \left\llbracket f_{k-1}   \right\rrbracket_1 \leq ( K - \delta_2)^k  \llbracket  {f}    \rrbracket_1.
\end{align}

As $ \cW^\ddagger$ is finite, $\alpha:= \inf\{\Phi_w(x): w \in \cW^\ddagger, x \in X \} >0$. By dividing  
each $v \in \cW^\ddagger$ into two words $v_1,v_2$ of length $m+1$ and defining ${\Phi}_{m+1}(x):= \alpha $ for $x\in [v_1]$ and ${\Phi}_{m+1}(v_2x) :=  {\Phi}_{m+1}(vx) - \alpha$ for $(v_2x) \in [v_2]$, we may and do assume in the next step that $\Phi_w=\alpha$ for all $w \in \cW^\ddagger$.  The above estimates in \eqref{eq:absorbing_hoelder_coeficients} and \eqref{eq:decay-along-orbits} now imply that, for $l:=m+1$, $\delta_3 := 1 - \sqrt{1- {\delta_2}/{K}}$ and each $x \in X$ and $f \in \mathcal{H}_c$ with $f \geq 0$ and $\llbracket f \rrbracket_1< \infty$,

\begin{align}
\nonumber 
 \left(\sum_{v\in \V} \left(\widehat{T}^{l n}(f)(x,v)\right)^2 \right)^{1/2}
 = & \left\llbracket \sum_{J \subset \{1,\ldots,n\}} \;
 \sum_{\forall j \notin J: w_j \notin \cW^\ddagger} \; \sum_{\forall j \in J: w_j \in \cW^\ddagger}  
 {\Phi}_{w_1\ldots w_n}(x) 
 f_{w_1\ldots w_n}   \right\rrbracket_1 \\
\nonumber
\leq & \sum_{\genfrac{}{}{0pt}{}{J \subset \{1,\ldots,n\}}{\forall j \notin J: w_j \notin \cW^\ddagger} }  
\left\llbracket \sum_{\forall j \in J: w_j \in \cW^\ddagger}   {\Phi}_{w_1\ldots w_n}(x) f_{w_1\ldots w_n}  \right\rrbracket_1\\
\nonumber
\leq &   
 \sum_{\genfrac{}{}{0pt}{}{J \subset \{1,\ldots,n\}}{\forall j \notin J: w_j \notin \cW^\ddagger} } 
 \left(\max_{\forall j \in J: w_j \in \cW^\ddagger} {\Phi}_{w_1\ldots w_n}(x) \right)
 \left\llbracket \sum_{\forall j \in J: w_j \in \cW^\ddagger} f_{w_1\ldots w_n}  \right\rrbracket_1 \\
\nonumber
\leq &   
 \sum_{\genfrac{}{}{0pt}{}{J \subset \{1,\ldots,n\}}{\forall j \notin J: w_j \notin \cW^\ddagger} } 
 \left(K^{\#J} \max_{\forall j \in J: w_j \in \cW^\ddagger} {\Phi}_{w_1\ldots w_n}(x) \right)
( 1 - \delta_2/K)^{\#J} \llbracket f \rrbracket_1\\ 
\nonumber
\leq & 
 \sum_{\genfrac{}{}{0pt}{}{J \subset \{1,\ldots,n\}}{\forall j \notin J: w_j \notin \cW^\ddagger} } 
 \sum_{\forall j \in J: w_j \in \cW^\ddagger} {\Phi}_{w_1\ldots w_n}(x) 
(1 - \delta_3)^{\#J}   \llbracket f \rrbracket_1  \\
\nonumber
 = &\sum_{J \subset \{1,\ldots,n\}} \sum_{\genfrac{}{}{0pt}{}{\forall j \notin J: w_j \notin \cW^\ddagger}{\forall j \in J: w_j \in \cW^\ddagger} } {\Phi}_{w_1\ldots w_n}(x) (1 - \delta_3)^{\#J}  \llbracket f \rrbracket_1 \\
 = &\sum_{J \subset \{1,\ldots,n\}} (1-\alpha)^{n - \#J} \alpha^{\#J} (1 - \delta_3)^{\#J}  \llbracket f \rrbracket_1
\label{eq:estimate-contraction} 
 = (1- \alpha \delta_3)^n \llbracket f \rrbracket_1.
\end{align}

It now follows  from Jensen’s inequality that $\llbracket \widehat{T}^{ln}(f)\rrbracket_1 \leq (1- \alpha \delta_3)^n \llbracket f \rrbracket_1$. As $\llbracket f \rrbracket_1 = \llbracket f \rrbracket_\infty$ for $f \in \mathcal{H}_c$, it follows from   Lemma \ref{lemmaTn}(i) and Gelfand's formula that $\rho(\widehat{T})\leq ({1-\alpha\delta_3})^{1/l}<1$, which is a contradiction.

\medskip
\noindent \textsc{Step 3}. We now fix $\epsilon>0$.  It follows from the above that there exists 
$f \in \cH_c$ with $\llbracket f \rrbracket_1=1$,  $f \geq 0$, and 
 $ \llbracket \widehat{T} (f) - f \rrbracket_1 \leq \epsilon$. Observe that, as $\llbracket f \rrbracket_1<\infty$, we may assume without loss of generality, that there exists a finite set $B\subset \V$ such that $f$ is supported on $X\times B$. This implies that there exists a finite, increasing sequence of finite subsets $A_1 \subset A_2 \subset \cdots \subset A_k$ of $\V$ and $\lambda_1, \lambda_2 , \ldots, \lambda_k >0 $ such that 
 $
 f= \sum_{i=1}^k \lambda_i \1_{X \times A_i}
 $. 
By monotonicity, we have 
\[(X \times A_i\setminus T^{-1}(X \times A_i)) \cap (T^{-1}(X \times A_j) \setminus (X \times A_j)) = \emptyset,\]
 for all $1 \leq i,j \leq n$. Also note that $ \tau_v(x) \in T^{-1} (X \times A_i)$ for a $v \in \cW^1$  implies that $ \tau_w(x) \in T^{-1} (X \times A_i)$ for all $w \in \cW$. Hence, with $\triangle$ referring to the symmetric difference, 
\begin{align*}
|\widehat{T}(f) - f| &= \left| {\sum}_i \lambda_i \left(\1_{X \times A_i}\widehat{T}\left(\1_{X \times A_i} - \1\right) +  \1_{X \times A_i^c} \widehat{T}\left(\1_{X \times A_i}\right)\right)  \right|\\
& = \left| {\sum}_i \lambda_i \left(  
 \widehat{T}\left(\1_{(X \times A_i) \setminus T^{-1}(X \times A_i)}\right)
 - \widehat{T}\left(\1_{T^{-1}(X \times A_i) \setminus (X \times A_i) }\right)
  \right)  \right| \\
& = {\sum}_i \lambda_i 
 \widehat{T}\left(\1_{(X \times A_i) \triangle T^{-1}(X \times A_i)}\right) =  \widehat{T}\left(|f - f\circ T|\right) .	 
\end{align*}
Hence, for each $v \in \cW$, we have that $\mu([v]) \llbracket  {f}_v -  {f} \rrbracket_1 \leq 
 \llbracket \widehat{T}\left( f \right) - f  \rrbracket_1 \leq \epsilon$. In particular, for each finite subset $\cW^\#$ of $\cW$, there exists $f$ such that  $\llbracket  {f}_v -  {f} \rrbracket_1$ is uniformly arbitrary small for $v \in \cW^\#$ in this finite subset. We now choose $\cW^\#$ finite such that $\sum_{w \notin \cW^\#}\mu([w])< \epsilon/4$, and $f$ as above with the additional property that $ \llbracket  {f}_v -  {f} \rrbracket_1 \leq \epsilon/2$ for all $v \in \cW^\#$. However, we now suppose that the $\lambda_i$ are chosen such that  $f^2= \sum_{i=1}^k \lambda_i \1_{X \times A_i}$. For $f\in \cH_c$ we use  $\hat{f}:\V \to \V $ to denote the function given by $\hat{f}(g):=f(x,g)$.  By the same argument based on $A_i \subset A_{i+1}$ as above, one obtains that for each $v \in \cW$,
 \begin{align} \nonumber
 \|\hat{f}^2 \circ \kappa_v^{-1} -  \hat{f}^2 \|_{1} &
 = \sum_{g\in \V} \left| \sum_{i=1}^n \lambda_i \1_{\kappa_v(A_i)}(g) - \1_{A_i}(g)   \right|
 = \sum_{g\in \V}  \sum_{i=1}^n \lambda_i \1_{\kappa_v(A_i)\triangle A_i}(g) \\
 \label{eq:counting_symmetric_diferences}
 &  =  \sum_{i=1}^n \lambda_i \#(\kappa_v(A_i)\triangle A_i).
 \end{align}
On the other hand, observe that 
 $\|\hat f_1^2 - \hat f_2^2\|_1 \leq \|\hat f_1 + \hat f_2\|_2 \cdot \|\hat f_1 - \hat f_2\|_2$ is an easy consequence of the Cauchy-Schwarz inequality, which implies by the choice of $f$ that the right hand side of \eqref{eq:counting_symmetric_diferences} is smaller than or equal to $\epsilon$  for all $v \in \cW^\#$. It is then easy to see that 
\begin{align*}
    \sum_{i=1}^n  \lambda_i \sum_{v \in \cW}  \mu([v]) \; \#({\kappa_v(A_i)\triangle A_i}) \leq 2\epsilon.
\end{align*}
It now follows from $1 = \llbracket f \rrbracket_1^2 = \sum_{i} \lambda_i \#A_i $  that there has to exist $A \in \{A_1, \ldots, A_n\}$ with
\[   \sum_{v \in \cW}  \mu([v]) \; \#({\kappa_v(A)\triangle A }) \leq  2 \epsilon (\# A ). \]
In order to deduce (iii) from the estimate, note that it follows from the above, that 
\begin{align} \label{eq:integrating_out}
\sum_v \varphi_v(x) |\1_{\kappa_v(A)}(g) - \1_{A}(g)| = |\widehat{T}(\1_{X \times A})(x,g) - \1_{X \times A}(x,g)|\end{align}
for all $(x,g) \in X \times \V$. By integrating over $X \times \V$, we hence obtain part (iii) of the theorem, that is  $\int | \widehat{T}(\1_{X \times A}) - \1_{X \times A} | d \mu \leq \epsilon \cdot \#(A)$.

\medskip
\noindent \textsc{Step 4: (i) $\iff$ (iii)}. We now show that amenability and part (iii) are equivalent. In order to do so, first note that \eqref{eq:counting_symmetric_diferences} and \eqref{eq:integrating_out}  imply that   
\begin{align} \label{eq:relating-l_1-and-counting}
\int | \widehat{T}(\1_{X \times A}) - \1_{X \times A} | d \mu =  \sum_{v \in \cW}  \mu([v]) \; \#({\kappa_v(A)\triangle A }),
\end{align} 
and that, by bijectivity of $\kappa_v$,  $\#(A \setminus \kappa_v(A)) = \#(\kappa_v(A) \setminus A)$. Furthermore, observe that
 $e \in \partial A$ if and only if, by definition $s(e) \in A$ and $t(e) \notin A$, which is equivalent to the existence of $v \in \cW$ such that, for $g=s(e) \in A$, $t(e)=\kappa_v(g)$ and $\kappa_v(g) \in \kappa_v(A) \setminus A$. 
Hence, for $\epsilon >0$,  
\[2 \# \partial^\epsilon A \leq 2 \sum_{v : \mu([v])>\epsilon}   \# (\kappa_v(A) \setminus A) = \sum_{v : \mu([v])>\epsilon}   \# (\kappa_v(A) \triangle A).\] 
In particular, part (iii) of the theorem implies amenability. In order to obtain the reverse direction, observe that  
weighted amenability allows to find $\epsilon$ and $A$ such that the right hand side of \eqref{eq:relating-l_1-and-counting} divided by $\#A$ is arbitrary small.    

\medskip
\noindent \textsc{Step 5: (iii) $\Rightarrow$ (iv)}. Suppose that $\int | \widehat{T}(\1_{X \times A}) - \1_{X \times A} | d \mu \leq \epsilon \cdot \#(A)$. As $| \widehat{T}(\1_{X \times A}) - \1_{X \times A} | = \widehat{T}(\1_{(X \times A) \triangle T^{-1}(X \times A)}) \leq 1$, we have by Jensen's inequality
\begin{align*} 
\epsilon \llbracket \1_{X \times A}) \rrbracket_1^2 & = \epsilon  \cdot \#(A) \geq \int | \widehat{T}(\1_{X \times A}) - \1_{X \times A} | d \mu 
 = \sum_{g \in \V} \int_{X_g}  | \widehat{T}(\1_{X \times A}) - \1_{X \times A} | d \mu  \\
 & \geq  \sum_{g \in \V}  \int_{X_g}  | \widehat{T}(\1_{X \times A}) - \1_{X \times A} |^2 d \mu \ge \llbracket \widehat{T}(\1_{X \times A}) - \1_{X \times A} \rrbracket_1^2.
\end{align*}
Hence, $\llbracket \widehat{T}(\1_{X \times A}) - \1_{X \times A} \rrbracket_1 \leq \sqrt{\epsilon}   \llbracket \1_{X \times A}) \rrbracket_1 $.  

\medskip
\noindent \textsc{Step 6: (iv) $\Rightarrow$ (ii)}. 
 For each $\epsilon>0$ there exists  $f \in \cH_c$ with $\llbracket f \rrbracket_1 =1 $
 and $\llbracket  \widehat{T}(f) -  f \rrbracket_1 \leq \epsilon$. By Lemma 3.5 (i) there exists a uniform constant $ C\ge 1$ such that $\llbracket  \widehat{T}(f) -  f \rrbracket_\infty \leq C \epsilon$. It follows that $\widehat{T}(f) -  f$ has no bounded inverse in $\cH_\infty$ and therefore, $\rho(\widehat{T})\ge 1$. By Proposition 3.3 this implies $\rho(\widehat{T})= 1$.

\medskip
\noindent \textsc{Step 7: The case $n\geq 2$}. Note that only Steps 1 and 2 of the proof above rely on 
uniform loops at time $n=1$. Hence, in order to prove the theorem for $n \geq 2$, it remains to show that $\rho(\widehat{T})=1$ implies that (iii) holds. In order to do so, first observe that $\rho(\widehat{T}^n) = \rho(\widehat{T})^n$ by Gelfand's formula for the spectral radius and 
hence, that $\rho(\widehat{T})=1$ implies that $\rho(\widehat{T}^n)=1$.

Furthermore, as  $(Y,S)$ is topologically transitive, there exists $p \in \N$, the period of $S$, and a decomposition $Y_1, \ldots Y_p$ of $Y$, measurable with respect to $\alpha$ such that $T(Y_{k}) = Y_{k+1}$
and $T^p: Y_k \to Y_k$ is topologically mixing, for any $k\in \Z/p\Z$. However, as $\theta$ is the full shift, it follows that each $Y_k$ is of the from $X \times \V_k$, where $\V_1, \ldots \V_p$ is a decomposition of $\V$. Furthermore, if 
$T^n$ has uniform loops, then $Y_k = Y_{k+n}$, which implies that $n$ is a multiple of $p$. In particular, the result for uniform loops at time 1 applied to $T^n: X \times \V_0 \to X \times \V_0$ implies that, for a given $\epsilon > 0$, there exists $A_0 \subset \V_0$ such that $\int | \widehat{T}^n(\1_{X \times A_0}) - \1_{X \times A_0} | d \mu \leq \epsilon \cdot \#(A_0)$. 

Now assume that $k+l =n$. We now approximate $\widehat{T}^k(\1_{X \times A_0)}$ by $\1_{X \times A_k} $ for some suitable $A_k \subset \V_k$. In order to do so, first observe that  \eqref{eq:counting_symmetric_diferences} applied to a single set $X \times A_0$ implies
\begin{align*}
 \epsilon \cdot \#(A_0) & \geq \int \left| \widehat{T}^n(\1_{X \times A_0}) - \1_{X \times A_0} \right| d \mu  
 =   \int  \widehat{T}^{l+k} (|\1_{(X \times A_0) \triangle T^{-k-l}(X \times A_0}| ) d \mu \\
 & 
  =  
   \int  \widehat{T}^k(|\1_{(X \times A_0)  \triangle T^{-k-l}(X \times A_0}| ) d \mu 
  =   \int \left| \widehat{T}^k(\1_{X \times A_0} -   \1_{T^{-l}(X \times A_0)} )\right| d\mu.
\end{align*}
Hence, in average, $\widehat{T}^k(\1_{X \times A_0})$ behaves almost like an indicator function. In order to show, that this indicator function might be chosen to be of the form $\1_{X \times A_k}$ define, for $\delta >0$,
\begin{align*}
A_k & := \left\{g \in\V: \widehat{T}^k(\1_{X \times A_0})(x,g) \; \geq \; 1-\delta \forall x \in X \right\}, \\
B_k & := \left\{g \in\V: \widehat{T}^k(\1_{X \times A_0})(x,g) \; \leq \; \delta \forall x \in X \right\}.  
\end{align*}
By choosing $ \delta < 1/2$, we obtain that  $A_k \cap B_k = \emptyset$.
Furthermore, as $\theta$ is a Gibbs-Markov map, it follows that $ \widehat{T}^k(\1_{X \times A_0})(x,g)=c^{\pm 1}  \widehat{T}^k(\1_{X \times A_0})(y,g)$ for some $c>0$. Combining this fact with $ \widehat{T} \1 = \1$, it follows  for any $x \in X$ and  $g \in (A_k \cup B_k )^c$ that $ \delta/c < \widehat{T}^k(\1_{X \times A_0})(x,g) < 1- \delta/c$. By dividing the integral above into three parts, it follows that 
\begin{align*}
 \epsilon \cdot \#(A_0) & \geq \sum_{g \in A_k} \int \left| \widehat{T}^k(\1_{X \times A_0}) -   \1_{T^{-l}(X \times A_0)} \right| d\mu  +  \sum_{g \in B_k} \int \left| \widehat{T}^k(\1_{X \times A_0}) -   \1_{T^{-l}(X \times A_0)} \right| d\mu \\
  & \phantom{\geq} +  \sum_{g \notin A_k \cup B_k} \int \left| \widehat{T}^k(\1_{X \times A_0}) -   \1_{T^{-l}(X \times A_0)} \right| d\mu \\
  & \geq  (1-\delta) \mu( (X\times A_k) \cap T^{-l}((X\times A_0)^c)) + 
  (1-\delta) \mu( (X\times B_k) \cap T^{-l}(X\times A_0)) \\
    & \phantom{\geq}  + \frac{\delta}{c} \mu(X\times (A_k \cup B_k)^c).
 \end{align*}
Furthermore, the above estimate implies for $\tilde{c} := ({2+c})/{\delta}$ that 
\begin{align*} 
 \int |\1_{X \times A_k} -  \1_{T^{-l}(X \times A_0)}| d\mu  & 
\leq \mu((X\times A_k)  \cap  T^{-l}(X\times A_0^c) )  + \mu((X\times B_k)  \cap  T^{-l}(X\times A_0) ) \\
& \phantom{\leq } +  \mu(X\times (A_k \cup B_k)^c) 
\\ 
& \leq \frac{\epsilon}{1-\delta} \#(A_0) + \frac{\epsilon}{1-\delta} \#(A_0) + \frac{c \epsilon}{\delta} \#(A_0) \leq 
\tilde{c} \epsilon \cdot \#(A_0).
\end{align*} 
In particular, 
$\#(A_k) = \mu(X\times A_k) =  \mu(T^{-l}(X\times A_0)) \pm  \tilde{c}\epsilon \cdot \#(A_0) = \left(1 \pm  \tilde{c} \epsilon \right)  \#(A_0)$. This then implies from the estimate below that 
$ f:= \sum_{k=0}^{n-1} \1_{X\times A_k}$ is an almost eigenfunction. That is, considering $k$ as element of $\Z/n\Z$ and employing $ \widehat{T}(g\circ T ) = g  \widehat{T}(\1) = g$, 
\begin{align*}
\int |\widehat{T}(f) - f|d\mu & \leq \sum_{k=0}^{n-1} \int |\widehat{T}(\1_{X\times A_k}) - \1_{X\times A_{k-1}}|d\mu \\
& \leq  \sum_{k=0}^{n-1} \int |\widehat{T}(\1_{X\times A_k} - \1_{X\times A_0}\circ T^{n-k})| + |\1_{X\times A_0}\circ T^{n-k+1} - \1_{X\times A_{k-1}}|d\mu\\
& \leq 2n   \tilde{c} \epsilon \cdot \#(A_0) \leq 2n \tilde{c} \epsilon  \left(1 +   \tilde{c} \epsilon \right)
 \sum_{k=0}^{n-1} \#(A_k)  \ll \epsilon  \int |f| d\mu. 
\end{align*}
By applying Step 3 of the proof to $f$ as above, it follows that for any $\epsilon > 0$ there exists $A \subset V$ finite such that 
$\int |\widehat{T}(\1_{X\times A}) - \1_{X\times A}|d\mu \leq \epsilon   \#(A)$. This finishes the proof of the theorem. 
\end{proof}

In order to complete the picture, we now analyze the exponential decay rate  of the return probabilities to a fixed vertex $\mathbf{o} \in \V$.  In order to do so, for $n \in \N$ and $x \in X$, set $\kappa_x^n:= \kappa_{\theta^{n-1}(x)} \circ \cdots \circ \kappa_x$. The associated decay rate is for $\mathbf{o} \in \V$ defined by 
\begin{align*}
R(T,\mathbf{o}) &:= \limsup_{n \to \infty} \sqrt[n]{ \mu(\{x \in X : \kappa_x^n(\mathbf{o})=\mathbf{o} \}) }.
\end{align*}
We now relate the exponential decay rate $R(T,\mathbf{o})$ with the Gurevich pressure $P_G(T,\varphi)$ of the topological Markov chain $T$ and the potential $\varphi(x):= \log d\mu / d\mu \circ \theta$, whose definition we recall now. For $A\subset Y$, set
\[
P_G(T,\varphi,A) :=  \limsup_{n \to \infty}  \frac{1}{n} \log \sum_{T^n x = x,\;  x \in A} e^{\sum_{k=0}^{n-1} \varphi(T^k(x))}.
\]
Then, provided that $T$ is topological transitive and $\varphi$ is locally Hölder continuous, it follows that $P_G(T,\varphi,A) = P_G(T,\varphi,B)$ whenever
$A$ and $B$ are cylinders of length 1 (see \cite{Sarig--Thermodynamic-Formalism-For-Countable--ETDS1999}). In particular, the \emph{Gurevich pressure} of $T$ and $ \varphi$ is defined by $P_G(T,\varphi):=P_G(T,\varphi,A)$, where $A$ is a cylinder of length 1.

\begin{proposition} \label{prop:gurevic-pressure}
$P_G(T,\varphi) = \log R(T,\mathbf{o})$.
\end{proposition}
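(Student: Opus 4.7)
The plan is to rewrite both sides of $P_G(T,\varphi)=\log R(T,\mathbf{o})$ as sums over admissible words of length $n$ and compare them via the Gibbs property. I fix $a\in\cW^1$ and consider the length-$1$ cylinder $[a]\times\{\mathbf{o}\}\subset Y$. Because $\theta$ is full-branched, the $T^n$-fixed points in $[a]\times\{\mathbf{o}\}$ are parametrized by words $w=(w_0,\ldots,w_{n-1})\in\cW^n$ with $w_0=a$ and $\kappa_w(\mathbf{o})=\mathbf{o}$ (the latter condition making sense because $\kappa$ is constant on length-$1$ cylinders), via the periodic orbit $y_w=(w_0w_1\cdots w_{n-1})^\infty$. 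The Birkhoff sum satisfies $e^{S_n\varphi(y_w)}=\varphi_w(y_w)$, and the full-branched Gibbs--Markov property yields $\varphi_w(x)\asymp\mu([w])$ uniformly in $x\in X$ and $w\in\cW^\infty$, since $\int\varphi_w\,d\mu=\mu(\tau_w(X))=\mu([w])$ (using full branches) and $\log\varphi_w$ has uniformly bounded oscillation. Hence
\[
 Z_n([a]\times\{\mathbf{o}\}) \;\asymp\; S_n(a,\mathbf{o}) \;:=\; \sum_{w\in\cW^n:\,w_0=a,\,\kappa_w(\mathbf{o})=\mathbf{o}}\mu([w]),
\]
while $\mu(\{x:\kappa_x^n(\mathbf{o})=\mathbf{o}\})=\sum_{a\in\cW^1}S_n(a,\mathbf{o})$.

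The upper bound $P_G(T,\varphi)\leq\log R(T,\mathbf{o})$ is immediate from $S_n(a,\mathbf{o})\leq\mu(\{x:\kappa_x^n(\mathbf{o})=\mathbf{o}\})$. For the lower bound I exploit the uniform-loop hypothesis to obtain $u_0\in\cW^1$ with $\kappa_{u_0}(\mathbf{o})=\mathbf{o}$. For any $w\in\cW^n$ satisfying $\kappa_w(\mathbf{o})=\mathbf{o}$, the concatenation $u_0w\in\cW^{n+1}$ starts with $u_0$ and still returns $\mathbf{o}$ to itself, because $\kappa_{u_0w}(\mathbf{o})=\kappa_w(\kappa_{u_0}(\mathbf{o}))=\kappa_w(\mathbf{o})=\mathbf{o}$; by bounded distortion $\mu([u_0w])\geq c\,\mu([w])$ with $c>0$ independent of $w$ and $n$. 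Summing over $w$ gives
\[
 S_{n+1}(u_0,\mathbf{o}) \;\geq\; c\cdot \mu(\{x:\kappa_x^n(\mathbf{o})=\mathbf{o}\}),
\]
whence $P_G(T,\varphi,[u_0]\times\{\mathbf{o}\})\geq\log R(T,\mathbf{o})$. By Sarig's invariance of the Gurevich pressure over length-$1$ cylinders (valid since $T$ is topologically transitive and $\varphi$ is locally Hölder), $P_G(T,\varphi)=P_G(T,\varphi,[u_0]\times\{\mathbf{o}\})\geq\log R(T,\mathbf{o})$, completing the proof.

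The main obstacle is the lower bound, because restricting the sum $\sum_a S_n(a,\mathbf{o})$ to a single starting cylinder may lose exponential mass when $\cW^1$ is infinite; uniform loops are exactly what permits channelling arbitrary $\mathbf{o}$-returns through a prescribed starting cylinder without destroying the loop at $\mathbf{o}$. When uniform loops hold only for an iterate $T^N$ with $N\geq 2$, the same argument applies to $T^N$ with potential $S_N\varphi$, and the identities $P_G(T^N,S_N\varphi)=N\cdot P_G(T,\varphi)$ and $R(T^N,\mathbf{o})=R(T,\mathbf{o})^N$ then yield the conclusion.
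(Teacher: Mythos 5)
Your argument is correct under the standing assumptions of the section and is essentially the paper's proof: both sides are rewritten as sums over length-$n$ words via the Gibbs property $\varphi_w \asymp \mu([w])$ (the paper phrases this as Bowen's property together with full branches), the inequality $P_G(T,\varphi)\le \log R(T,\mathbf{o})$ is the trivial one, and the reverse inequality is obtained by prepending to each return word a fixed connecting word that fixes $\mathbf{o}$, so that all returns are funnelled through a single initial cylinder at a bounded-distortion cost, after which Sarig's cylinder-independence of the Gurevich pressure finishes the proof. The one genuine difference is the source of that connecting word: the paper uses topological transitivity of $T$ to produce, for an arbitrarily prescribed $a\in\cW^1$, a word $b$ with $\kappa_{ab}(\mathbf{o})=\mathbf{o}$, so its proof never invokes uniform loops; you instead take $u_0\in\cW^1$ from the uniform-loop hypothesis, which is legitimate since loops are a standing assumption of the section, but it forces your closing reduction for the case where loops exist only for an iterate $T^N$. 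That final remark is the only weak point: the identity $P_G(T^N,S_N\varphi)=N\,P_G(T,\varphi)$ and the use of Sarig's invariance for $T^N$ (which need not be topologically transitive when $T$ has period greater than one, in which case only one transitivity component is available) are asserted rather than justified, an issue the paper's transitivity-based choice of connecting word avoids entirely.
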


\begin{proof}
As $\varphi$ is by assumption Hölder continuous, it is well known that Bowen's property holds. That is,
\[ \sum_{k=0}^{n-1} \varphi(\theta^k(x)) \asymp \sum_{k=0}^{n-1} \varphi(\theta^k(y))
\]
 whenever $x,y$ are in the same cylinder of length $n$. Hence, as $\theta$ has full branches and as $\widehat{T}$  is the transfer operator associated with the product of $\mu$ and counting measure,
\begin{align*}
 \log R(T,\mathbf{o}) &= \limsup_{n \to \infty} \frac{1}{n} \log \mu(\{x \in X : \kappa_x^n(\mathbf{o})=\mathbf{o} \})
      =  \limsup_{n \to \infty} \frac{1}{n} \log \int_{X \times \{ \mathbf{o}\}} \widehat{T}^n(\mathbf{1}_{X \times \{ \mathbf{o}\}})  d\mu
 \\ &
      = \limsup_{n \to \infty} \frac{1}{n} \sum_{w \in \cW^n : \kappa_w(\mathbf{o}) = \mathbf{o} }  \int  \tfrac{d\mu \circ \tau_w}{d\mu}  d\mu
      =  \limsup_{n \to \infty}  \frac{1}{n} \log \sum_{T^n (x, \mathbf{o}) = (x, \mathbf{o})} e^{\sum_{k=0}^{n-1} \varphi(\theta^k(x))}
 \\ & \geq  P_G(T,\varphi).
\end{align*}
Now fix  $a \in \cW^1$. Then, by topological transitivity of $T$, there exists $\ell \in \N$ and $b \in \cW^\ell$ such that $\kappa_{ab}(\mathbf{o})=\mathbf{o}$. Hence,
again by bounded distortion, with respect to any $ y \in [ab]$,
\begin{align*}
 \sum_{\genfrac{}{}{0pt}{}{T^{n+\ell +1} (x, \mathbf{o}) = (x, \mathbf{o}),}{x \in [a]}} e^{\sum_{k=0}^{n + \ell } \varphi(\theta^k(x))}
 & \geq \sum_{\genfrac{}{}{0pt}{}{T^{n+\ell +1} (x, \mathbf{o}) = (x, \mathbf{o}),}{x \in [ab]}} e^{\sum_{k=0}^{n + \ell } \varphi(\theta^k(x))}\\
 & \asymp e^{\sum_{k=0}^{\ell} \varphi(\theta^k(y))} \sum_{T^{n} (x, \mathbf{o}) = (x, \mathbf{o})} e^{\sum_{k=0}^{n -1} \varphi(\theta^k(x))}.
\end{align*}
It then follows by taking the limit as $n \to \infty$ and from the same argument as above that $P_G(T,\varphi) \geq \log R(T,\mathbf{o})$.
\end{proof}

Note that, if $(Y,T)$ is topologically transitive, then $R(T,\mathbf{o})$ is independent of the vertex $\mathbf{o}$, and we denote the common value by $R(T)$.

In order to relate $R(T)$ with the spectral radius $\rho(\widehat{T})$, we will employ a certain weak notion of symmetry. That is, we refer to $(Y,T,\mu)$ as \emph{symmetric} if there exist $(C_n)$ and $(N_n)$ such that $\lim_{n \to \infty} C_n^{1/n}=1$, $\lim_{n \to \infty} N_n/n=0$ and, for all $v,w \in \V$,
 \begin{align}\label{eq:weakly-symmetric} \mu \left( \left\{ x \in X: \kappa^n_x(v)=w \right\} \right) \leq C_n \sum_{k=n-N_n}^{n+N_n}\mu \left( \left\{ x\in X: \kappa^k_x(w)=v  \right\} \right).\end{align}

 Define $A:\cH_\infty \rightarrow  \cH_c$ given by $Af(v):=\int f(\cdot,v) d\mu$ and $T_n : \cH_c \rightarrow \cH_c$ given by $T_n:=A \widehat{T}^n$.  The following lemma can be proved as in \cite[Lemma 3.2]{Jaerisch--Group-Extended-Markov-Systems--PMS2015}.
 \begin{lemma} \label{lemmaTn} Suppose that 
 $(Y,T,\kappa)$ is a  topologically transitive extension of a Gibbs-Markov map 
 $(X,\te,\mu,\alpha)$ with full branches. Then the following holds. 
 \begin{enumerate}[ref=(\roman*)]
 \item \label{item1}  There exists $C\ge 1$ such that for every $f \in \cH_c$ and all $n\in \N$, $C^{-1} \llbracket\widehat{T}^n(f)\rrbracket_\infty \le \llbracket\widehat{T}^n(f)\rrbracket_1  = \Vert T_n(f) \Vert_2  \le \llbracket\widehat{T}^n(f)\rrbracket_\infty $.
 \item  \label{item2}$\lim_{n\rightarrow \infty} \Vert T_n \Vert_2^{1/n}=\rho(\widehat{T})$
 \item  \label{item3}$\limsup_{n\rightarrow \infty} \langle T_n \1_{X\times \{\mathbf{o} \}},\1_{X\times \{\mathbf{o} \}}\rangle^{1/n}=R(T)$ for any $\mathbf{o} \in \V$.
 \end{enumerate}
 \end{lemma}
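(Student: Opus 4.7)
The plan is to prove (i) via the bounded distortion of full-branched Gibbs-Markov maps, to obtain (ii) from (i) together with Proposition \ref{prop:operator_on_cH}, and to verify (iii) by a direct computation using the definition of $\widehat{T}$ as dual of the Koopman operator.

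For (i), I would note that for $f \in \cH_c$, writing $f(x,g) = c_g$, one has
\[
\widehat{T}^n f(x,v) \;=\; \sum_{w \in \cW^n} \varphi_w(x)\, c_{\kappa_w^{-1}(v)}.
\]
Full branches combined with the Hölder continuity of $\log \varphi_w$ yield a uniform distortion constant $D \geq 1$ with $\varphi_w(x)/\varphi_w(y) \in [D^{-1},D]$ for all $w \in \cW^\infty$ and $x,y \in X$, obtained by telescoping along the successive inverse branches and summing the geometric Hölder tails. For $f \geq 0$ this bound passes directly to $\widehat{T}^n f(\cdot,v)$ and, since $\mu$ is a probability, gives $\Vert \widehat{T}^n f(\cdot,v)\Vert_\infty \leq D \Vert \widehat{T}^n f(\cdot,v)\Vert_1$; squaring and summing over $v$ produces the outer inequalities in (i). The middle equality $\llbracket \widehat{T}^n f\rrbracket_1 = \Vert T_n f\Vert_2$ is then immediate from $T_n f(v) = \int \widehat{T}^n f(\cdot,v)\,d\mu = \Vert \widehat{T}^n f(\cdot,v)\Vert_1$ for $f \geq 0$. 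The extension to signed $f \in \cH_c$ follows from the decomposition $f = f^+ - f^-$, whose disjointness of $\V$-supports keeps $\llbracket\cdot\rrbracket_p$ additive up to a factor of $\sqrt 2$, so (i) holds with a slightly larger constant $C$.

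For (ii), identify $\cH_c$ with $\ell^2(\V)$ through $f \mapsto \hat f$, $\hat f(v) := f(\cdot,v)$, so that $\llbracket f\rrbracket_1 = \Vert \hat f\Vert_2$. By (i), for non-negative $f$ the ratios $\Vert T_n \hat f\Vert_2/\Vert \hat f\Vert_2$ and $\llbracket \widehat{T}^n f\rrbracket_1/\llbracket f\rrbracket_1$ coincide, so $\Lambda_n$ equals the supremum of the former over non-negative $\hat f \in \ell^2(\V)$. A sign decomposition then gives $\Lambda_n \leq \Vert T_n\Vert_2 \leq \sqrt 2\,\Lambda_n$, and Proposition \ref{prop:operator_on_cH} yields $\lim_n \Vert T_n\Vert_2^{1/n} = \lim_n \Lambda_n^{1/n} = \rho(\widehat{T})$.

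For (iii), the defining duality for $\widehat{T}$ with respect to $\mu \otimes m_\V$ gives
\[
\int \widehat{T}^n(\1_{X \times \{\mathbf{o}\}})\, \1_{X \times \{\mathbf{o}\}}\,d\mu \otimes m_\V
\;=\; \int \1_{X \times \{\mathbf{o}\}}\, \bigl(\1_{X \times \{\mathbf{o}\}} \circ T^n\bigr)\,d\mu \otimes m_\V
\;=\; \mu\bigl(\{x \in X : \kappa_x^n(\mathbf{o}) = \mathbf{o}\}\bigr),
\]
while the left hand side equals $\int \widehat{T}^n(\1_{X \times \{\mathbf{o}\}})(x, \mathbf{o})\,d\mu(x) = T_n \1_{X \times \{\mathbf{o}\}}(\mathbf{o}) = \langle T_n \1_{X \times \{\mathbf{o}\}}, \1_{X \times \{\mathbf{o}\}}\rangle$ under the identification of $\1_{X\times\{\mathbf{o}\}}$ with $\delta_{\mathbf{o}} \in \ell^2(\V)$. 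Taking $n$-th roots and $\limsup$ recovers $R(T, \mathbf{o})$, which is independent of $\mathbf{o}$ by topological transitivity. The main technical point in the whole lemma is the sign issue in (i), routinely resolved by the decomposition $f = f^+ - f^-$ above.
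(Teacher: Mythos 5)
Your treatment of (ii) and (iii), and of (i) on the cone of nonnegative functions, is correct and essentially self-contained (the paper itself only points to Lemma 3.2 of Jaerisch at this spot): the uniform distortion bound $\varphi_w(x)\le D\,\varphi_w(y)$, valid for all $w\in\cW^\infty$ by the Gibbs--Markov property, gives $\Vert\widehat{T}^nf(\cdot,v)\Vert_\infty\le D\Vert\widehat{T}^nf(\cdot,v)\Vert_1$ for $f\ge 0$ in $\cH_c$; the identity $T_nf(v)=\Vert\widehat{T}^nf(\cdot,v)\Vert_1$ holds because $\widehat{T}^nf\ge 0$; in (ii) the bound $\Lambda_n\le\Vert T_n\Vert_2\le\sqrt{2}\,\Lambda_n$ via $f=f^+-f^-$ is legitimate since only an \emph{upper} bound is needed there; and the duality computation in (iii), together with the independence of $R(T,\mathbf{o})$ of $\mathbf{o}$ by transitivity, is exactly right.

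The gap is the closing claim that (i) "extends to signed $f$" by the same decomposition. After applying $\widehat{T}^n$, the functions $\widehat{T}^nf^+$ and $\widehat{T}^nf^-$ are no longer supported over disjoint sets of vertices, so $\llbracket\cdot\rrbracket_p$ is not additive for them, and the triangle inequality only yields upper bounds; it cannot produce the lower bound $C^{-1}\llbracket\widehat{T}^nf\rrbracket_\infty\le\llbracket\widehat{T}^nf\rrbracket_1$ nor the equality $\llbracket\widehat{T}^nf\rrbracket_1=\Vert T_nf\Vert_2$, both of which are obstructed by cancellation between the images of $f^+$ and $f^-$. In fact the equality is genuinely false for signed $f$: take the simple walk on $\V=\Z$ over a two-symbol full shift with nonconstant $\varphi_0=p$, $\int p\,d\mu=\mu([0])=\tfrac12$, and $f=\1_{X\times\{0\}}-\1_{X\times\{2\}}$; then $\widehat{T}f(x,1)=2p(x)-1$, so $T_1f(1)=0$ while $\Vert\widehat{T}f(\cdot,1)\Vert_1>0$, whence $\Vert T_1f\Vert_2<\llbracket\widehat{T}f\rrbracket_1$. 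So the statement (as in its source) must be read on nonnegative $f\in\cH_c$, and the correct move is to restrict to $f\ge 0$ rather than to decompose; this costs nothing, since every later use in the paper reduces to the nonnegative case ($\Lambda_k$ is a supremum over $f\ge0$, (iii) involves indicators, and the signed function $\widehat{T}(f)-f$ in Step 6 is handled through the identity $|\widehat{T}(f)-f|=\widehat{T}(|f-f\circ T|)$ from Step 3), but as written your final sentence asserts a step that fails.
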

 With this Lemma at hand, we are now in position to show that $\rho(\widehat{T})=R(T)$ in case of a symmetric extension.
\begin{proposition} \label{prop:symmetry}  
Assume that $(Y,T,\kappa)$ is a topologically transitive  extension of the Gibbs-Markov map $(X,\te,\mu,\alpha)$ with full branches. 
Then $R(T)\leq \rho(\widehat{T}) \leq 1$. If $(Y,T,\mu)$ is symmetric then $\rho(\widehat{T})=R(T)$. 
\end{proposition}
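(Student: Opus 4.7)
The first two inequalities come cheaply from the preceding material. The bound $\rho(\widehat T)\le 1$ is precisely Proposition \ref{prop:operator_on_cH}, while $R(T)\le\rho(\widehat T)$ follows by applying Cauchy--Schwarz in $\ell^2(\V)$ to the pairing $\langle T_n\mathbf{1}_{X\times\{\mathbf o\}},\mathbf{1}_{X\times\{\mathbf o\}}\rangle\le\|T_n\|_2$, taking $n$-th roots and invoking parts \ref{item2} and \ref{item3} of Lemma \ref{lemmaTn}. The substance of the proposition is therefore the reverse inequality $\rho(\widehat T)\le R(T)$ under the symmetry assumption \eqref{eq:weakly-symmetric}.

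The plan for this reverse direction is to work with the self-adjoint positive operator $T_nT_n^*$ on $\ell^2(\V)$, whose operator norm equals $\|T_n\|_2^2$, and to dominate its matrix entries by a time-$2n$ transition operator. Writing out the matrix gives $(T_n)_{v,u}=\mu(\{y:\kappa_y^n(u)=v\})$, and in this language the symmetry hypothesis \eqref{eq:weakly-symmetric} becomes the entry-wise matrix inequality
\[
T_n\;\le\;C_n\sum_{k=n-N_n}^{n+N_n}T_k^*.
\]
Moreover, the Gibbs--Markov full-branch property yields the uniform distortion bound $\mu([wu])\asymp\mu([w])\mu([u])$, which in turn gives the entry-wise comparison $T_aT_b\asymp T_{a+b}$ with a universal Gibbs constant $C_{\mathrm G}$. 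Iterating these two ingredients $m$ times, one obtains the entry-wise bound
\[
(T_nT_n^*)^m\;\le\;C_{\mathrm G}^{\,2m-1}\,C_n^{\,m}(2N_n{+}1)^m\max_{|K|\le mN_n}T_{2mn+K},
\]
so that the $(v,v)$-entry is controlled by a subexponential-in-$m$ factor times the return probability $\mu(\{y:\kappa_y^{2mn+K}(v)=v\})$. Taking the $1/m$-th root and using $\lim_n N_n/n=0$ then bounds the result by $R(T)^{2n(1+o_n(1))}$.

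To close the argument one has to compare the operator norm $\|T_nT_n^*\|$ with these diagonal matrix entries, and this is the main obstacle. The tool is a Perron--Frobenius-type identity
\[
\|S\|\;=\;\lim_{m\to\infty}\langle S^m e_v,e_v\rangle^{1/m}
\]
valid for any self-adjoint positive operator $S$ on $\ell^2(\V)$ whose associated support graph is strongly connected: the spectral theorem identifies the right-hand side with $\max\supp(\mu_{e_v})$, and strong connectivity forces each spectral measure $\mu_{e_v}$ to reach the top of the spectrum of $S$. For $S=T_nT_n^*$ the required irreducibility follows from topological transitivity together with the uniform loop property, which ensure that any two vertices can be reached in $n$ steps from a common predecessor. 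Substituting the entry-wise bound of the previous paragraph, letting $n\to\infty$ and using $C_n^{1/n}\to 1$ together with $(2N_n{+}1)^{1/n}\to 1$, we obtain
\[
\rho(\widehat T)^2\;=\;\lim_{n\to\infty}\|T_nT_n^*\|^{1/n}\;\le\;R(T)^2,
\]
which is the desired inequality.
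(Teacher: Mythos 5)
Your argument follows the same overall architecture as the paper's: reduce via Gelfand's formula to $\|T_nT_n^*\|$, use self-adjointness to relate the norm to diagonal matrix entries, and then iterate the entry-wise estimates coming from the Gibbs-Markov property ($T_aT_b\asymp T_{a+b}$) and the symmetry hypothesis, finally absorbing the subexponential factors $C_n, N_n$. The entry-wise bookkeeping and the final $n\to\infty$ limit are fine. The paper simply defers these computations to \cite[Prop.~1.5]{Jaerisch--Group-Extended-Markov-Systems--PMS2015}, while you spell them out; that part is a correct elaboration of the same approach.

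The gap is in your justification of the Perron--Frobenius-type identity $\|S\|=\lim_m\langle S^m e_v,e_v\rangle^{1/m}$. You claim irreducibility of $T_nT_n^*$ ``follows from topological transitivity together with the uniform loop property.'' But uniform loops is \emph{not} a hypothesis of Proposition \ref{prop:symmetry}: this proposition assumes only that the extension is topologically transitive (uniform loops appears in Theorem \ref{theo:main_result}, not here), so you are using an assumption you do not have. Moreover, even granting uniform loops, the supporting claim that ``any two vertices can be reached in $n$ steps from a common predecessor'' cannot hold for a fixed $n$ on an infinite graph of unbounded diameter; loops only let you delay, so a common time-$n$ predecessor of $u$ and $v$ exists only when $u,v$ are already within $n$ steps of each other. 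Irreducibility of $T_nT_n^*$ for every fixed $n$ does not follow in the way you assert. A way to close the gap without any extra hypothesis is: finitely supported $f$ with $\llbracket f\rrbracket=1$ are dense, so some such $f$ has $\sup\supp\mu_f$ arbitrarily close to $\|T_n^*T_n\|$; since $T_n^*T_n$ has nonnegative entries, $\langle (T_n^*T_n)^k f,f\rangle\le |\supp f|\cdot\max_{u\in\supp f}((T_n^*T_n)^k)_{u,u}$, so the $k$-th-root limit is attained at \emph{some} vertex $u^\ast$; then use that $R(T,\mathbf{o})$ is independent of the vertex by topological transitivity to replace $u^\ast$ by $\mathbf{o}$. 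That keeps the argument within the stated hypotheses.
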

\begin{proof}
The first assertion follows from Lemma  \ref{lemmaTn} \ref{item2} and \ref{item3} in tandem with the Cauchy-Schwarz inequality.
Now suppose that $(Y,T,\mu)$ is symmetric.  Denote by $T_n^{*}$ the adjoint operator of $T_n$. By  Lemma  \ref{lemmaTn}  \ref{item2} we conclude that  
$$\rho(\widehat{T}) =  \lim_{n\rightarrow \infty} \Vert T_n^{*}T_n  \Vert_2^{1/(2n)}.  $$ Since $T_n^{*}T_n$ is self-adjoint,  it is well known that for each $n\in \N$,  $$\Vert T_n^{*}T_n \Vert_2 =\limsup_{k\rightarrow \infty} ((T_n^{*}T_n)^k \1_{X\times \{\mathbf{o} \}},\1_{X\times \{\mathbf{o} \}})^{1/k}.$$  
Since $\varphi$ is Hölder continuous, we conclude that for all $f_1, f_2 \in \mathcal{H}_c$ and $n,m\in \N$, 
\[
\langle T_nT_m(f_1),f_2 \rangle \asymp \langle T_{n+m}(f_1),f_2 \rangle.
\] 
Moreover, by the symmetry assumption, we have 
\[
\langle T_n(f_1),f_2 \rangle \le C_n  \sum_{k=n-N_n}^{k+N_n} \langle T_{k}^{*}(f_1),f_2 \rangle.
\]
Now, we can prove $\rho(\widehat{T})=R(T)$ as in \cite[Proposition 1.5]{Jaerisch--Group-Extended-Markov-Systems--PMS2015}.
\end{proof}


\section{Amenability and embedded Gibbs-Markov structures}

We now relate decay rates, the spectral radius and amenability of extensions of Markov maps with  embedded Gibbs-Markov structure. Recall that,  for a graph extension $T$ with an embedded Gibbs-Markov structure $\sigma: \Om \to \Om$ as in definition \ref{def:embedded Gibbs-Markov structure}, 
$S:\Om \times \V \to \Om \times \V$, $S(x,g)=T^{\eta(x)}(x,g)$ denotes the associated graph extension whose base is a Gibbs-Markov map with full branches. The corresponding graph extensions are related as follows.
\[
\begin{tikzcd}
X \times \V   \arrow{rd} \arrow{rrr}{T} & & &  X \times \V \arrow{dl}\\
& X \arrow{r}{\theta} & X  & \\
& \Om \arrow[hook,dashed]{u}  \arrow{r}{\sigma}
& \Om \arrow[hook,dashed]{u}  & \\
\Om \times \V \arrow{ru} \arrow{rrr}{S} \arrow[hook,dashed]{uuu} & & &  \Om \times \V  \arrow{ul}\arrow[hook,dashed]{uuu}
\end{tikzcd}
\]
In here, the dashed arrows stands for a tower construction and therefore, the corresponding parts of the diagram do not necessarily commute with respect to inclusion. We begin with comparing the decay rates of the return probabilities of $S$ and $T$ to a fixed vertex $\mathbf{o} \in \V$. In order to do so, we introduce the following notation. For $n \in \N$ and $x \in X$, set $\kappa_x^n:= \kappa_{\theta^{n-1}(x)} \circ \cdots \circ \kappa_x$. Moreover, for $x \in \Omega$, define   $\eta_n := \sum_{j=0}^{n-1} \eta(\sigma^j(x))$ and 
$\hat{\kappa}_x^n:= \kappa_x^{\eta_n(x)}$. Note that with these definitions, $T^n(x,g)$ and $S^n(x,g)$  can be written as $(\theta^n(x),\kappa_x^n(g))$ and $(\sigma^n(x),\hat{\kappa}_x^n(g))$. The associated decay rates are now defined by 
\begin{align*}
R(T) &:= \limsup_{n \to \infty} \sqrt[{n}]{ \mu(\{x \in X : \kappa_x^n(\mathbf{o})=\mathbf{o} \})}, \\
R_\Omega(T) &:= \limsup_{n \to \infty} \sqrt[{n}]{ \mu(\{x \in \Om \cap \theta^{-n}(\Omega): \kappa_x^n(\mathbf{o})=\mathbf{o} \})}, \\
R(S) &:= \limsup_{n \to \infty} \sqrt[{n}]{ \nu(\{x \in \Om : \hat{\kappa}_x^n(\mathbf{o})=\mathbf{o} \})}. 
\end{align*}
Observe that, in the examples we have in mind, the logarithm of these rates coincides with the Gurevic pressures of $T$ and $S$. In order to relate this decay rates, the notion of an adequate embedding from Definition \ref{def:embedded Gibbs-Markov structure} will be crucial. Recall that this provides the existence of $(C_n)$ with $\lim_n C_n/n =0$ and 
\begin{align}\label{eq:medium-variation} | \log{\varphi_{w}(x)} - \log {\varphi_{w}(y)})|  \leq C_n d_\sigma(x,y) \leq C_n,\end{align}
for all $[wa] \in \alpha_{n+1}$ and $x,y \in \theta^n[w]$ with $[wa] \subset \Omega$, $[a] \subset \Omega$, $[a]\in \alpha$, and the existence 
of an almost surely finite function $\eta^\dagger:\Omega \to \N$ such that, for almost every $x \in \Om$ and $l=0, \ldots , \eta(x)-1$  with $\theta^\ell(x)\in \Om$, we have that $\eta(x)- l \leq \eta^\dagger(\theta^l(x))$.

In order to relate the decay rates, we introduce the following condition which also only depends on the embedded Gibbs-Markov structure and is independent from $\kappa$ and the embedded Gibbs-Markov structure. 

\begin{defn} 
The embedded Gibbs-Markov structure has exponential tails if
\[ \limsup_{n \to \infty} \sqrt[n]{ \mu\left\{ x \in \Omega : \eta(x)=n   \right\} } < 1.\]
\end{defn} 
\begin{proposition}  \label{prop:pressure_T_vs_S}
Assume that $(X,\theta,\mu, \alpha)$ is a Markov map with embedded Gibbs-Markov structure $\sigma$. Then $R(S) \leq R_\Omega(T) \leq R(T)$.  
If, in addition, 
is an adequately embedded  Gibbs-Markov structure  with exponential tails, then 
 $R_\Omega(T)=1$ implies that $R(S)=1$.
\end{proposition}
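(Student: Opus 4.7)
The bound $R_\Omega(T) \leq R(T)$ is immediate from the containment of the defining return sets. For $R(S) \leq R_\Omega(T)$, any $x \in \Omega$ with $\hat{\kappa}_x^k(\mathbf{o}) = \mathbf{o}$ satisfies $\theta^{\eta_k(x)}(x) = \sigma^k(x) \in \Omega$ and $\kappa_x^{\eta_k(x)}(\mathbf{o}) = \mathbf{o}$; partitioning according to $m = \eta_k(x) \geq k$ yields
\[\nu\bigl(\{\hat{\kappa}_x^k(\mathbf{o}) = \mathbf{o}\}\bigr) \leq \frac{1}{\mu(\Omega)} \sum_{m \geq k} \mu\bigl(\{x \in \Omega \cap \theta^{-m}(\Omega) : \kappa_x^m(\mathbf{o}) = \mathbf{o}\}\bigr).\]
For any $r > R_\Omega(T)$ the right-hand side is $O(r^k)$, so $R(S) \leq r$, and letting $r \downarrow R_\Omega(T)$ gives the claim.

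\textbf{Hard direction, setup and full part.} I prove the contrapositive: assuming $\nu(\{\hat{\kappa}^k \mathbf{o} = \mathbf{o}\}) \leq C \rho^k$ for some $\rho < 1$ together with the exponential tails $\mu(\{\eta = n\}) \leq C\lambda^n$ for some $\lambda < 1$, I will show $R_\Omega(T) < 1$. Write $A_n := \{x \in \Omega \cap \theta^{-n}(\Omega) : \kappa_x^n(\mathbf{o}) = \mathbf{o}\}$ and split $A_n = A_n^{\mathrm{full}} \sqcup A_n^{\mathrm{par}}$ according to whether $n = \eta_k(x)$ for some $k$ or not. For the full part, $\mu(A_n^{\mathrm{full}}) = \sum_k \mu(\{\eta_k = n, \hat{\kappa}^k \mathbf{o} = \mathbf{o}\})$ with each summand bounded by $\min(C\rho^k, \mu(\{\eta_k = n\}))$. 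A Chernoff-type large deviation estimate for $\eta_k$ (the exponential moments of $\eta$ combined with the bounded distortion of the Gibbs--Markov structure $\sigma$, inherited from the adequate embedding, make $\eta_k$ behave like a sum of approximately i.i.d.\ variables with exponential tails) gives $\mu(\{\eta_k = n\}) \leq e^{-cn}$ for $k$ outside a narrow window around $n/\bar\eta$, where $\bar\eta = \int \eta\, d\nu$; inside the window $C\rho^k$ is itself exponentially small in $n$. Summing yields $\mu(A_n^{\mathrm{full}}) \leq C' \tau^n$ for some $\tau < 1$.

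\textbf{Partial part (principal obstacle).} For $x \in A_n^{\mathrm{par}}$ with unique $k$ satisfying $\eta_k(x) < n < \eta_{k+1}(x)$, set $z := \sigma^k(x) \in \Omega$ and $r := n - \eta_k(x) \geq 1$. The change of variables $x \mapsto z$ on each cylinder $[u]$, $u \in \beta^k$, together with the distortion control supplied by condition~(i) of the adequate embedding, produces the convolution bound
\[\mu(A_n^{\mathrm{par}}) \leq C \sum_{k \geq 1} \sum_{m=0}^{n-1} \sum_{g \in \V} p_k(m,g)\, q(n-m,g),\]
where $p_k(m,g) := \mu(\{\eta_k = m, \hat{\kappa}^k \mathbf{o} = g\})$ and $q(r,g) := \mu(\{z \in \Omega : 0 < r < \eta(z),\ \theta^r z \in \Omega,\ \kappa_z^r(g) = \mathbf{o}\})$. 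The naïve estimate $q(r,g) \leq \mu(\{\eta > r\}) \leq C \lambda^r$ discards the cocycle constraint, and the resulting double sum over $g$ and $k$ does not decay in $n$; this is the principal obstacle. The way out is condition~(ii) of the adequate embedding, which supplies $r' := \eta(z) - r \leq \eta^\dagger(\theta^r z)$ and thereby completes each partial step into the next inducing step at time $n + r'$, embedding $A_n^{\mathrm{par}}$ into a union indexed by $r'$ of ``full'' $S$-returns at step $k+1$ and time $n + r'$ with cocycle shift $\kappa_{\theta^r z}^{r'}(\mathbf{o})$. A geometric summation in $r'$ using the exponential tails of $\eta$, combined with the full-part estimate applied at times $n + r'$, yields $\mu(A_n) \leq C''(\tau')^n$ for some $\tau' < 1$, contradicting $R_\Omega(T) = 1$ and completing the proof.
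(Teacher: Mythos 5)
Your easy direction is fine, and so is your \emph{full part}: the Chernoff-type bound, using $\int s^{\eta_k}d\nu \ll (C\int s^{\eta}d\nu)^k$ (which follows from the Gibbs--Markov distortion of $\sigma$, not from adequacy) together with the exponential tails, is a legitimate substitute for the paper's route via Sarig's Ruelle theorem, log-convexity of $s\mapsto\lambda_s$ and Cauchy--Schwarz; that part of your argument corresponds to the paper's Step~2. The genuine gap is in the \emph{partial part}, which is exactly where the paper spends its Step~1, and your one-sentence ``way out'' restates the obstacle rather than overcoming it. When you complete a partial return at time $n$ to the next inducing time $n+r'$, the cocycle endpoint becomes $\kappa^{r'}_{\theta^{n}x}(\mathbf{o})$, an a priori arbitrary vertex of $\V$ (infinitely many possibilities if the alphabet is infinite), while your hypothesis $R(S)<1$ and your full-part estimate only control $S$-returns ending at $\mathbf{o}$. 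Dropping the endpoint constraint and summing over vertices reproduces precisely the non-decaying renewal sum you identified as the principal obstacle; keeping it requires comparing returns to a displaced vertex $g'$ with returns to $\mathbf{o}$ \emph{uniformly} in $g'$, which transitivity alone does not provide. Moreover the completion length $r'$ is controlled only by $\eta^\dagger(\theta^n x)$, and the proposition imposes no tail condition on $\eta^\dagger$ (it is merely a.s.\ finite); the ``geometric summation in $r'$ using the exponential tails of $\eta$'' is only available if you discard the cocycle constraint on the completing stretch, i.e.\ exactly the move you cannot afford.

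The paper closes this hole with a device absent from your sketch: using that $\Omega$ is a finite union of atoms of $\alpha$, it fixes a constant $K$ and a finite set $B\subset\V$ such that on each atom at least half of the conditional measure has $\eta^\dagger\leq K$, and at least half again keeps the cocycle inside $B$ during those at most $K$ steps; condition~(i) of adequacy (the $e^{C_n}$ medium-variation estimate) transfers these proportional-measure statements across the length-$n$ cylinders at only subexponential cost, so that the partial-return mass at time $n$ is bounded, up to a factor $4e^{C_n}$, by the mass of genuine $S$-returns at times $n+j$, $j\leq K$, landing in the fixed finite set $B$; topological transitivity then allows replacing $B$ by $\{\mathbf{o}\}$ at the level of radii of convergence. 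Without some version of this $K$/$B$ restriction and the condition~(i) transfer, your completion step does not produce decay in $n$, so the hard direction is not proved as written. (Minor: you cite condition~(i) for the convolution factorization, where the full-branch Gibbs--Markov property of $\sigma$ is what is actually used; condition~(i) is needed for the transfer step just described.)
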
 

\begin{proof} As $\sigma$ is a Gibbs-Markov map, there exists a $\sigma$-invariant probability $m$ on $\Om$ with $ 1/C < dm/d\nu <C$ for some $C>0$ (see \cite{Aaronson-Denker-Urbanski--Ergodic-Theory-For-Markov--TMS1993}). 
 Clearly, we have $R(S)\leq 1$. 
Let $A_{n}:= \left\{x \in X : \kappa_x^n(\mathbf{o}))=\mathbf{o} \right\}$ and  $\hat{A}_{n}:= \left\{x \in \Om : \hat\kappa_x^n(\mathbf{o}))=\mathbf{o} \right\}$. This gives rise to the following estimate for $s\geq 1$,
\begin{align*} 
   \sum_{k=1}^\infty s^k  \mu(A_k) 
 & \geq   \sum_{k=1}^\infty s^k \mu(\{x \in \Om : \kappa_x^k(\mathbf{o})=\mathbf{o},\, \theta^k(x) \in \Om \})
   \asymp  \sum_{k=1}^\infty s^k \nu(\{x \in \Om : \kappa_x^k(\mathbf{o})=\mathbf{o},\, \theta^k(x) \in \Om \}) \\
 & \stackrel{(\ast)}{\geq}   \sum_{k=1}^\infty \sum_{l=1}^k s^k \nu(\{x \in \Om : \eta_l(x) =k,  \hat\kappa_x^l(\mathbf{o})=\mathbf{o} \}) 
   =  \sum_{l=1}^\infty  \int_{\hat{A}_l} s^{\eta_l(x)} d \nu 
  \geq  \sum_{l=1}^\infty s^{l} \nu(\hat{A}_l).
\end{align*}
Hence, the radius of convergence $1/R(S)$ of the last series is bigger than or equal to the radius of convergence $1/R_\Omega(T)$ of the second series which itself is bigger than or equal to the radius of convergence $1/R(T)$ of the first series. This proves the first assertion.

\medskip 
\noindent \textsc{Step 1.} Now assume that the Gibbs-Markov system is adequately embedded. We now show that the series on both sides of $(\ast)$ have the same radius of convergence. 
Set 
\[
\xi_n(x) := \min \left(\left\{ \eta_k(x) \geq n : \ k=1,2,3, \ldots \right\}\right).
\]  
That is, $\xi_n(x)$ is the time of the next return after time $n$ to $\Om$ with respect to $\sigma$. Now let $\eta^\dagger$ be given by  Definition \ref{def:embedded Gibbs-Markov structure}. As $\Om$ is a finite union of elements of $\alpha$, there exists 
 $K$ such that  $\nu\left(\left\{x \in [a] : \eta^\dagger(x) \leq K \right\}\right) \geq  \nu\left(\left\{x \in [a]: \eta^\dagger(x) > K \right\}\right)$ for all $a \in \alpha$ and $[a]\subset \Omega$.
 With $\ast$ standing for $w\in \left\{v \in \cW^{n+1}: \kappa_x^n(\mathbf{o}))=\mathbf{o}, x\in  [v],\theta^n([v]) \subset \Om\right\}$, 
\begin{align*}
 (\ast\ast) & := \nu\left(\left\{x \in \Om : \kappa_x^n(\mathbf{o}))=\mathbf{o}, \theta^n(x) \in \Omega \right\}\right) = \sum_{\ast} \nu([w]) \\
 & = \sum_{\ast} \left(
  \nu\left(\left\{x \in [w] : \eta^\dagger(\te^{n}(x)) \leq K \right\}\right) +  \nu\left(\left\{x  \in [w]  : \eta^\dagger(\te^{n}(x)) > K \right\}\right)\right)\\
& \leq  \sum_{\ast} \sup_{y \in  \theta^n([w])} \varphi_w(y) 
\left(
\nu\left(\left\{y \in \theta^n([w]) : \eta^\dagger(y) \leq K \right\}\right) +   \nu\left(\left\{y \in \theta^n([w]): \eta^\dagger(y) > K \right\}\right) \right)\\
& \leq 2  \sum_{\ast} \sup_{y \in  \theta^n([w])} \varphi_w(y) \, \nu\left(\left\{x \in \theta^n([w]) : \eta^\dagger(y) \leq K \right\}
\right).  
\end{align*}
Furthermore, again by using that $\Om$ is a finite union of elements of $\alpha$, 
there exists $B \subset \V$ finite such that, for all $[a] \in \alpha$, $[a] \subset \Omega$, 
\begin{align*} &\nu\left(\left\{x \in [a]: \eta^\dagger(x) \leq K, \kappa_x^k(\mathbf{o}))\in B  \;\forall 1\leq k \leq K \right\}\right) \\
& \geq  \nu\left(\left\{x \in [a]: \eta^\dagger(x) \leq K,  \;\exists 1\leq k \leq K \hbox{ s.t. } \kappa_x^k(\mathbf{o}))\notin B   \right\}\right).\end{align*}
Moreover, it follows from \eqref{eq:medium-variation} that for $v \in \cW^\infty$ with $[v] \subset \Omega$ and  $\theta^{|v|-1}([v]) \subset \Omega$, we have $\varphi_v(x)/\varphi_v(y)\leq \exp {C_n} $ for all $x,y \in \theta^{|v|-1}([v])$. Hence, as
 $\xi_n(x) -n \leq \eta^\dagger(\te^{n}(x))$ (cf. condition (ii) of an adequate embedding in Definition \ref{def:embedded Gibbs-Markov structure}) that 
\begin{align*}
(\ast\ast) & \leq 
 4 \sum_{\ast} \sup_{x \in  \theta^n([w])} \varphi(x) \, 
 \nu\left(\left\{x \in  \theta^n([w]) : \eta^\dagger(x) \leq K, \kappa_x^k(\mathbf{o}))\in B  \;\forall 1\leq k \leq K \right\}\right) \\
& \leq 4 e^{C_n} \nu\left(\left\{x \in \Omega: \xi_n(x)-n \leq K, \kappa_x^{\xi_n(x)}(\mathbf{o}) \in B \right\}\right).
\end{align*}
Hence, as $B$ and $K$ are independent of $n$, 
one obtains for $s \leq 1$ that 
\begin{align*}
 &   \sum_{n=1}^\infty s^n \mu(\{x \in \Om : \kappa_x^n(\mathbf{o}))=\mathbf{o},\, \theta^n(x) \in \Om \}) \\
  \leq &  4  \sum_{n=1}^\infty s^n e^{C_n} \nu\left(\left\{x \in \Omega: \xi_n(x)-n \leq K, \kappa_x^{\xi_n(x)}(\mathbf{o}) \in B \right\}\right) \\
   \leq & 4Ks^{-K} \sum_{k=1}^\infty s^{k} e^{C_k} \nu\left(\left\{x \in \Omega: \exists l \hbox{ s.t. } \eta_l(x)=k,  \hat\kappa_x^l(\mathbf{o})) \in B \right\}\right).
\end{align*}
By Hadamard's formula, the radius of convergence of the right hand side does not depend on the factor $e^{C_k}$ as $\lim_k (\exp C_k)^{1/k} =1$. Moreover, as $\theta$ is transitive and again using the decay of $C_k/k$, one may replace the finite set $B$ with $\{\mathbf{o}\}$ without changing the radius of convergence.

\medskip
\noindent \textsc{Step 2.} Now assume that the embedding has exponential tails.  Then $\int s^\eta d\nu < \infty$ for $s\in [1,1+\epsilon]$, for $\epsilon$ sufficiently small. By applying Sarig's version of Ruelle's theorem to the potential 
\[ f_s (x) := \log \frac{d\nu}{d\nu\circ \sigma}(x) + \eta(x)\log s, \] 
it follows that 
$\int_{\{\eta=\ell\}} s^\eta d\nu \asymp \lambda_s^l$, where $\lambda_s = \lim_{l \to \infty} (\int s^{\eta_l} d\nu)^{1/l}$. We now show that $s \mapsto \lambda_s$ is continuous. 
By H\"older's inequality we have for $t \in [0,1]$ and $a,b$ with $\lambda_{e^a}, \lambda_{e^b}<\infty$ that 
\begin{align*}
\log \lambda_{e^{ta+(1-t)b}} & =\lim_{l\to \infty}\frac{1}{l}\log\int e^{\left(ta+(1-t)b\right)\eta_{l}}d\nu  \le \lim_{l\to \infty} \frac{1}{l}\log\left(\left(\int e^{a\eta_{l}}d\nu\right)^{t}\cdot\left(\int e^{b\eta_{l}}d\nu\right)^{1-t}\right)\\
 & =t\log \lambda_{e^a}+(1-t)\log \lambda_{e^b}.
\end{align*}
This shows that $s \mapsto \log \lambda_s $ is convex, and hence continuous. 

Now assume that $R_\Omega(T)=1$ and $R(S)<1$. Then there exists $t<1$ such that $ \nu(\hat{A}_l) \ll t^l$ for all $l$. 
Therefore, the Cauchy-Schwarz inequality implies that
\begin{align*}    \int_{\hat{A}_l} s^{\eta_l(x)} d \nu \leq       \sqrt{\nu(\hat{A}_l)  \cdot  \int s^{2\eta_l} d\nu}
\ll    t^{l/2} \lambda_{s^2}^{l/2}
\end{align*}
As $\lambda_1 =1$, it follows from continuity that for $s>1$ sufficiently close to 1, $t \lambda_{s^2}  < 1$. For this choice of $s$, we hence have that  
\[ 
 \sum_{l=1}^\infty  \int_{\hat{A}_l} s^{\eta_l(x)} d \nu  \ll  \sum_{l=1}^\infty (t \lambda_{s^2})^{l/2} < \infty,
\]
which is a contradiction to $R_\Omega(T)=1$. 
\end{proof}

We now relate $\mu$-amenability with $\nu$-amenability. As a first result in this direction, it follows from Proposition \ref{prop:mu-amenable} that there exists a $\kappa$-F\o lner sequence. As a $\kappa$-F\o lner sequence is also a $\hat\kappa$-F\o lner sequence, $\G$ is $\nu$-amenable.

\begin{defn} \label{def:finitely-covers} We say that $\hat\kappa$ finitely covers $\kappa$ if there exists a finite set $ \mathcal{K}  \subset \hat{\cW}^\infty$ such that, for all $v \in \cW^1$ and $g \in \V$, there exists $w \in \mathcal{K}$ such that $ \kappa_v(g) = \hat{\kappa}_{w}(g)$.
\end{defn}

\begin{proposition} \label{prop:amenability_T_vs_S} If $\hat\kappa$ finitely covers $\kappa$, then 
  $\nu$-amenability and  $\mu$-amenability are equivalent.  
\end{proposition}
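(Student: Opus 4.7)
The plan is to reduce the statement to the characterization of amenability through F\o lner sequences given by Proposition \ref{prop:mu-amenable}, applied once to the Markov map $(X,\theta,\mu,\alpha)$ (for $\mu$-amenability with respect to $\kappa$) and once to the induced Gibbs-Markov map $(\Omega,\sigma,\nu,\beta)$ (for $\nu$-amenability with respect to $\hat\kappa$). I would aim to prove the stronger claim that a single sequence $(K_n)$ of finite subsets of $\V$ is a $\kappa$-F\o lner sequence if and only if it is a $\hat\kappa$-F\o lner sequence.

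For the direction $\mu$-amenable $\Rightarrow$ $\nu$-amenable, the key observation (which does not require the covering hypothesis) is that every $\hat\kappa_w$ is already a $\kappa_v$. Indeed, if $w=(w_1\cdots w_k)\in\hat\cW^k$ is admissible for $\sigma$, then $\eta_k$ is constant on $[w]_\sigma$, say equal to $N$, and for $x\in [w]_\sigma$ one has $\hat\kappa^k_x=\kappa^N_x=\kappa_v$ where $v\in\cW^N$ is the $\theta$-word obtained by concatenating the $\theta$-words underlying $w_1,\ldots,w_k$. Hence any $\kappa$-F\o lner sequence is automatically a $\hat\kappa$-F\o lner sequence.

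For the reverse direction, let $(K_n)$ be a $\hat\kappa$-F\o lner sequence and fix $v\in\cW^1$. By the finite covering hypothesis there exists a finite set $\mathcal{K}\subset\hat\cW^\infty$ such that for every $g\in\V$ one may pick $w(g)\in\mathcal{K}$ with $\kappa_v(g)=\hat\kappa_{w(g)}(g)$. Partitioning $\V=\bigsqcup_{w\in\mathcal{K}}E_w$ according to a choice of $w(g)$, bijectivity of each $\kappa_v$ and $\hat\kappa_w$ yields
\[
|\kappa_v(K_n)\setminus K_n|=\sum_{w\in\mathcal{K}}|\kappa_v(K_n\cap E_w)\setminus K_n|=\sum_{w\in\mathcal{K}}|\hat\kappa_w(K_n\cap E_w)\setminus K_n|\leq\sum_{w\in\mathcal{K}}|\hat\kappa_w(K_n)\setminus K_n|.
\]
Dividing by $|K_n|$, the right-hand side tends to $0$ because $\mathcal{K}$ is finite and $(K_n)$ is a $\hat\kappa$-F\o lner sequence. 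This shows the F\o lner property of $(K_n)$ for all single letters in $\cW^1$, and the inductive argument already carried out in the proof of Proposition \ref{prop:mu-amenable} upgrades it to arbitrary $v\in\cW^\infty$.

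The main (and essentially only) obstacle is the passage through the covering: the hypothesis supplies a $w$ depending on the base point $g$, not a uniform one, so one has to slice $\V$ into the finitely many pieces $E_w$ before one can estimate the $\kappa_v$-boundary of $K_n$ by a finite sum of $\hat\kappa_w$-boundaries. Once this bookkeeping is in place, everything else is either the inductive step of Proposition \ref{prop:mu-amenable} or the trivial fact that $\hat\kappa$-words are $\kappa$-words.
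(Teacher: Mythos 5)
Your proof is correct and follows exactly the route the paper intends: the paper omits the argument as easy, but the paragraph preceding Definition \ref{def:finitely-covers} already records your first direction (a $\kappa$-F\o lner sequence is automatically a $\hat\kappa$-F\o lner sequence, since each $\hat\kappa_w$ equals $\kappa_v$ for a suitable $\theta$-word $v$), and your converse via Proposition \ref{prop:mu-amenable} is the intended use of the finite-cover hypothesis. The slicing of $\V$ into the sets $E_w$ is fine (one can even skip it, since $\kappa_v(K_n)\setminus K_n\subset\bigcup_{w\in\mathcal{K}}\bigl(\hat\kappa_w(K_n)\setminus K_n\bigr)$ directly), so there is nothing to add.
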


The proof of this proposition is easy and therefore omitted. The above results are summarized in the following diagram and the main result below for Markov maps with embedded Gibbs-Markov structure immediately follows from these.
\[
\begin{tikzcd}
R(T)=1   \arrow[Leftarrow]{r}  & R_{\Omega}(T)=1 	
\arrow[Rightarrow,shift right=+1.5ex]{d}[left]{\ref{prop:pressure_T_vs_S}} 
&  
& \mathcal{G} \,\, \mu\hbox{-amenable} \arrow[Rightarrow,shift right=+1.5ex]{d}[left]{\ref{prop:mu-amenable}} 
\\
 & R(S)=1 \arrow[Rightarrow,shift right=+1ex]{r} \arrow[Leftarrow,shift right=-0.5ex]{r}{\ref{prop:symmetry}}  \arrow[Rightarrow]{u}
& \rho(\widehat{S}) =1 
\arrow[Leftrightarrow]{r}{\hbox{\scriptsize\ref{theo:main_result}}} 
& \mathcal{G} \,\, \nu\hbox{-amenable}
\arrow[Rightarrow]{u}[right]{\ref{prop:amenability_T_vs_S}}
\end{tikzcd}
\]
\begin{theorem}\label{theo:main theorem - embedded GM structure}   Let $(X,\theta,\mu,\alpha)$ be a Markov map with $\theta$-invariant probability measure $\mu$. Suppose that $(Y,T,\kappa)$ is a graph extension of $(X,\theta,\mu,\alpha)$ with embedded Gibbs-Markov structure 
such that the induced graph extension $(Y,S,\widehat \kappa)$ is topologically transitive and has uniform loops. Then the following holds.
\begin{enumerate} 
\item If $\mathcal{G}$ is $\mu$-amenable and $(\Omega,\sigma,\nu)$ is symmetric then $R_\Omega(T)=1$.
\item Suppose that $T$ is topologically transitive, $\hat\kappa$ finitely covers $\kappa$ and the embedding is adequate and  has exponential tails. Then  
 $R_\Omega(T)=1$ implies that $\G$ is $\mu$-amenable.
\end{enumerate}\end{theorem}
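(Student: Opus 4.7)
The plan is to chain the results summarized in the diagram preceding the theorem. Both parts reduce to tracking implications between $\mu$-amenability of $\mathcal{G}$, $\nu$-amenability of $\mathcal{G}$, $\rho(\widehat{S})=1$, $R(S)=1$, and $R_\Omega(T)=1$ via the propositions developed earlier in this section, together with Theorem \ref{theo:main_result} applied to the induced graph extension $(Y,S,\widehat\kappa)$. The standing hypotheses on $(Y,S,\widehat\kappa)$ (topological transitivity with uniform loops, over a Gibbs-Markov base with full branches) exactly match those of Theorem \ref{theo:main_result}, so that theorem will give the equivalence $\rho(\widehat{S})=1 \iff \mathcal{G}$ is $\nu$-amenable without further work.

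For part (i), I would start from $\mu$-amenability of $\mathcal{G}$ and invoke Proposition \ref{prop:mu-amenable} to produce a $\kappa$-F\o lner sequence $(K_n)$. Since each generator $\hat\kappa_u$ of the induced cocycle with $u \in \widehat{\cW}^1$ agrees on each cylinder with some $\kappa_v$ for a word $v \in \cW^{\eta(u)}$ (here I use that $\kappa$ is constant on level-one cylinders, so the composition along the first $\eta(u)$ coordinates is a single $\kappa_v$), the same sequence $(K_n)$ is a $\widehat\kappa$-F\o lner sequence. Applying Proposition \ref{prop:mu-amenable} to $(\Omega,\sigma,\nu)$ then gives $\nu$-amenability of $\mathcal{G}$, and Theorem \ref{theo:main_result} yields $\rho(\widehat{S})=1$. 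Symmetry of $(\Omega,\sigma,\nu)$ combined with Proposition \ref{prop:symmetry} upgrades this to $R(S)=1$. Finally, the first inequality of Proposition \ref{prop:pressure_T_vs_S}, namely $R(S) \leq R_\Omega(T) \leq 1$, forces $R_\Omega(T)=1$.

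For part (ii), I would run the chain in reverse. Starting from $R_\Omega(T)=1$, the second assertion of Proposition \ref{prop:pressure_T_vs_S} (which needs the adequacy of the embedding and exponential tails) gives $R(S)=1$. Combined with $R(S) \leq \rho(\widehat{S}) \leq 1$ from Proposition \ref{prop:symmetry}, this yields $\rho(\widehat{S})=1$. Applying Theorem \ref{theo:main_result} to $(Y,S,\widehat\kappa)$ produces $\nu$-amenability of $\mathcal{G}$, and the finite-cover hypothesis permits me to invoke Proposition \ref{prop:amenability_T_vs_S} to promote this to $\mu$-amenability.

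The present proof itself is essentially a bookkeeping argument; the genuine obstacles are absorbed into earlier results, most notably Proposition \ref{prop:pressure_T_vs_S} (where the adequacy of the embedding and the exponential tails combine with a Ruelle-type analyticity argument to show that a radius-of-convergence comparison is sharp) and Theorem \ref{theo:main_result} (the Kesten criterion for the induced Gibbs-Markov extension). The only delicate bookkeeping step inside the present proof is the observation that $\widehat\kappa$-words lift to $\kappa$-words so that $\kappa$-F\o lner sequences are automatically $\widehat\kappa$-F\o lner sequences; this uses that $\kappa$ is constant on cylinders and is essentially the inductive argument appearing at the end of the proof of Proposition \ref{prop:mu-amenable}.
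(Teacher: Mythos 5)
Your proposal is correct and follows essentially the same route as the paper: the paper proves the theorem precisely by the diagram chase you spell out, namely Proposition \ref{prop:mu-amenable} plus the observation that a $\kappa$-F\o lner sequence is a $\hat\kappa$-F\o lner sequence, Theorem \ref{theo:main_result} applied to $(Y,S,\widehat\kappa)$, Proposition \ref{prop:symmetry}, Proposition \ref{prop:pressure_T_vs_S}, and Proposition \ref{prop:amenability_T_vs_S}. The implications are chained in the same order for both parts, so your write-up is a faithful expansion of the paper's ``immediately follows'' argument.
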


\section{Applications to Schreier graphs} \label{sec:applications}
This section is devoted to the application of Theorems \ref{theo:main_result} and \ref{theo:main theorem - embedded GM structure} to the specific case of a Schreier graph whose construction we recall now. Let $G$ be a discrete group,
$H$ a subgroup of $G$ and $\mathfrak{g} \subset G$ a generating set of $G$. The Schreier graph $\mathcal{G}=(\V,\E)$ associated with $\mathfrak{g}$ is then defined as the graph whose vertices are the cosets $\V=\{ Hg : g \in G\}$ and  edges
 $\E =  \{ (Hg,Hgh) : g \in G, h \in   \mathfrak{g} \}$ are given by the right action of $\mathfrak{g}$ on $\V$.
In order to define a graph extension of the Markov map $(X,\theta)$, it now suffices to fix a map $\gamma: X \to \mathfrak{g}$, $x \mapsto \gamma_x$ and consider  the skew product
\[ T:  X \times \V \to X \times \V, \; (x,Hg) \mapsto (\te x, Hg\gamma_x), \]
that is $\kappa$ is defined by $\kappa_x(Hg) := Hg\gamma_x$.  If $\gamma$ is measurable with respect to the Markov partition $\alpha$, we say that the extension has \emph{Markovian increments}, and otherwise that the extension has \emph{non-Markovian increments}, .

\subsection{Extensions by Schreier graphs with Markovian increments}
\label{sec:Schreier}
Throughout this section, we assume that $(X,\theta)$ is a Markov map and that $\gamma: X \to \mathfrak{g}$ is constant on cylinders of length 1.
In order to have a similar notation as for nearest neighbour cocycles at hand, set $\gamma_w := \gamma_{w_1}\gamma_{w_2} \cdots \gamma_{w_n}$ for $w = (w_1 \ldots w_n) \in \cW^\infty$. Moreover, for the embedded Gibbs-Markov map, let $\hat{\gamma}_x := \gamma_x \cdots \gamma_{\theta^{\eta(x)-1}(x)}$ and $\hat{\gamma}_w$ accordingly. The conditions of topological transitivity \textit{(tt)}, uniform loops \textit{(ul)} and finite cover \textit{(fc)} of $\kappa$ for extensions by Schreier graphs read as follows.
\begin{description}
\item[\rm\textit{(tt)}] 
For all $g,h \in G$, there exists $w \in \hat{\cW}^\infty$ with  $\hat{\gamma}_w \in gHh$. 
\item[\rm\textit{(ul)}] 
There is a finite subset $\mathcal{J} $ of  $\hat{\cW}^1$ such that $ \forall \, g \in G$, there exists $u \in \mathcal{J}$ with  
$\hat{\gamma}_u \in gHg^{-1}$. 
\item[\rm\textit{(fc)}] There is a finite subset $\mathcal{K} $ of  $\hat{\cW}^\infty$ such that $ \forall\,  g \in G, $   $\forall v \in \cW^1$,  there exists $u \in \mathcal{K} $ with  $\gamma_v\hat{\gamma}_u^{-1} \in gHg^{-1}$. 
\end{description}
In particular, if \textit{(tt)} and  \textit{(ul)} are satisfied and $\theta$ is a full Gibbs-Markov map (in this case, $\hat{\cW}^\infty = {\cW}^\infty$), then Theorem \ref{theo:main_result} provides an amenability criterium in terms of the spectral radius. On the other hand, if \textit{(tt)}, \textit{(ul)} and \textit{(fc)} are satisfied, $\sigma$ is adequately embedded and has exponential tails,
then part (ii) of Theorem  \ref{theo:main theorem - embedded GM structure} is applicable. In this situation, $R_\Omega(T)=1$ implies $\mu$-amenability. 


In order to obtain a less abstract criterion  
recall that the normal core of a subgroup is defined by 
\[H_0 := \bigcap_{g \in G} gHg^{-1} \]
and that $H_0$ is the maximal normal subgroup in $G$ which is contained in $H$. Note that the coset space $\{H_0g : g \in G\}$ is isomorphic to the group $G/H_0$ by normality. In particular, by substituting  $H$ by $H_0$ in the construction of $T$, we obtain
\[ T_0:  X \times G/H_0 \to X \times G/H_0, \; (x,H_0g) \mapsto (\te x, H_0g\gamma_x), \]
which is an extension by a group as considered in \cite{Stadlbauer--An-Extension-Of-Kestens--AM2013, Jaerisch--Group-Extended-Markov-Systems--PMS2015}. The advantage of this construction is that the topological transitivity of $T_0$ allows to modify the embedded Gibbs-Markov map such that the embedded map automatically is topologically transitive, has uniform loops and finitely covers $T$. The following result allows to deduce amenability from $R_\Omega(T)=1$ which is considered the hard part of Kesten's amenability criterion for groups.
\begin{theorem}\label{theo:Schreier - embedded GM structure} Let  $(X,\theta,\mu,\alpha)$ be a Markov map with $\theta$-invariant probability measure $\mu$ and with adequately embedded Gibbs-Markov structure with exponential tails. Furthermore, let $H$ be a subgroup of the countable group $G$ and $\gamma:X \to G$ be a map which is constant on cylinders of length 1 such that $\mathfrak{g}:= \gamma(X)$ is finite and that the {Markov map $T_0$} is topologically transitive.
If $R_\Omega(T)=1$, then  the Schreier graph associated with $\mathfrak{g}$ is $\mu$-amenable. 
\end{theorem}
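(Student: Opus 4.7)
\medskip
\noindent\textbf{Proof plan.} The strategy is to reduce the theorem to Theorem~\ref{theo:main theorem - embedded GM structure}(ii) by iterating the given embedded Gibbs-Markov structure so as to verify the Schreier-graph conditions \textit{(tt)}, \textit{(ul)}, \textit{(fc)} from Section~\ref{sec:Schreier}. The key observation is that $H_0 \subseteq gHg^{-1}$ for every $g\in G$, so every element of $H_0$ acts as the identity on the Schreier coset space $G/H$. Consequently, a single return word $u_0$ with $\hat\gamma_{u_0}\in H_0$ provides uniform loops, and for finite cover it suffices to exhibit, for each $s\in\mathfrak{g}$, a word $u_s$ with $\hat\gamma_{u_s}\in H_0 s$, since then $\gamma_v\hat\gamma_{u_s}^{-1}\in H_0\subseteq gHg^{-1}$ whenever $\gamma_v=s$.

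I first produce such words. By topological transitivity of $T_0$, for each target coset $g\in G/H_0$ there exists $w\in\cW^\infty$ with $\gamma_w\equiv g\pmod{H_0}$ and some $x\in[w]$ satisfying $\theta^{|w|}(x)\in\Omega$; after passing to a sub-cylinder we may assume $[w]\subseteq\Omega$. Since the cylinders $\{[u]:u\in\beta\}$ partition $\Omega$ modulo $\mu$, the word $w$ decomposes as a concatenation in $\beta^m$ for some $m$. As only finitely many target cosets (the identity in $G/H_0$ plus one per $s\in\mathfrak{g}$) need to be realized, I fix $N\in\mathbb{N}$ large enough so that all required words $u_0,u_{s_1},\ldots,u_{s_{|\mathfrak{g}|}}$ can be arranged to lie in $\beta^N$.

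Next, I pass to the iterated embedded Gibbs-Markov structure $(\Omega,\sigma^N,\beta^N,\eta_N)$ with $\eta_N:=\eta+\eta\circ\sigma+\cdots+\eta\circ\sigma^{N-1}$. Since $R_\Omega(T)$ depends only on $\Omega$ and $\theta$, the assumption $R_\Omega(T)=1$ is preserved. The full-branched Gibbs-Markov property of $\sigma^N$ is inherited from $\sigma$; topological transitivity of $T$ follows from that of $T_0$ via the projection $G/H_0\twoheadrightarrow G/H$ (well-defined as $H_0\subseteq H$); topological transitivity of the induced extension $S_N$ is obtained by the construction of step one applied to target cosets in $G/H$; and the choice of $u_0, u_{s_i}\in\beta^N$ yields uniform loops and finite cover. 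Adequacy of the new embedding is inherited: the distortion condition transfers since $d_\sigma\leq d_{\sigma^N}$, so it still holds with the original sub-linear constants $(C_n)$, and an almost surely finite witness is given by
\[ \eta_N^\dagger(y) \;:=\; \eta^\dagger(y) + \sum_{j=1}^{N-1}\eta(\sigma^j(y)), \]
whose defining inequality is checked by separating the cases in which $\theta^\ell(x)$ lies at the boundary of a $\sigma$-window from those in which it lies strictly inside, invoking the original $\eta^\dagger$ in the latter case. Exponential tails of $\eta_N$ are inherited from those of $\eta$ by standard sub-multiplicative estimates for Gibbs-Markov systems applied to the sum of $N$ consecutive return times. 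Theorem~\ref{theo:main theorem - embedded GM structure}(ii) then delivers $\mu$-amenability of $\mathcal{G}$.

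The principal obstacle is the last piece of bookkeeping: producing $\eta_N^\dagger$ and verifying its defining inequality when $\ell$ falls inside an $\eta$-window (so one must combine the original $\eta^\dagger$-bound in that window with the a.s.\ finite contribution of subsequent windows), together with the inheritance of exponential tails under iteration of the inducing time. Both steps rely essentially on the Gibbs-Markov mixing and distortion bounds for $\sigma$.
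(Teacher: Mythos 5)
There is a genuine gap at the very first step of your construction, and it is not repairable within your framework of merely iterating the given induced map. Topological transitivity of $T_0$ produces connecting words in the \emph{original} alphabet $\cW^\infty$, and you then assert that such a word ``decomposes as a concatenation in $\beta^m$''. This is unjustified and in general false: the inducing time $\eta$ of an embedded Gibbs--Markov structure need not be a first-return time to $\Omega$, so an original-alphabet word starting and ending in $\Omega$ need not align with the cumulative times $\eta_k$, and more fundamentally the set of group elements realized by the induced cocycle, $\{\hat\gamma_w : w \in \hat{\cW}^\infty\}$, can be strictly smaller than the set realized by $\gamma$ along original words. A concrete counterexample: $X=\{0,1\}^{\N}$, $\Omega=X$, $\eta\equiv 2$ (so $\sigma=\theta^2$, $\beta=\cW^2$), $G=\Z/2\Z$, $H$ trivial, $\gamma\equiv 1$. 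Here $T_0$ is topologically transitive, but $\hat\gamma_w=0$ for every induced word, so no word $u_s$ with $\hat\gamma_{u_s}\in H_0 s$ exists in any $\beta^N$, the induced extension is never topologically transitive and never finitely covers $\kappa$, and no iterate $\sigma^N$ changes this (iterating can only shrink the set of realizable elements). Consequently you cannot verify the hypotheses of Theorem~\ref{theo:main theorem - embedded GM structure}(ii) for $(\Omega,\sigma^N)$. The paper's proof avoids exactly this obstruction by \emph{changing the inducing scheme}: it chooses original-alphabet words $w_h\subset\Omega$ with $\gamma_{w_h u}\in hH_0$ and defines a new stopping time $\tilde\eta$ whose special branches $[v_h]=[w_h u]$ are not unions of $\beta$-cylinders; the adequacy conditions (i) and (ii) of Definition~\ref{def:embedded Gibbs-Markov structure} are then used precisely to show that this modified scheme is again an adequately embedded Gibbs--Markov structure with exponential tails.

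Two secondary problems would remain even if the realization step were granted. First, arranging all words $u_0,u_{s_1},\dots$ to lie in a \emph{common} $\beta^N$ requires padding that preserves the $H_0$-coset; padding by arbitrary $\beta$-words destroys the coset, and padding by copies of $u_0$ only changes lengths in multiples of $|u_0|$, so a congruence obstruction can block a common $N$. Second, your witness $\eta_N^\dagger(y)=\eta^\dagger(y)+\sum_{j=1}^{N-1}\eta(\sigma^j(y))$ does not satisfy condition (ii): for $y=\theta^l(x)$ with $l$ inside the $j$-th window, the remaining contribution is $\sum_{i>j}\eta(\sigma^i(x))$, and the points $\sigma^i(x)$ lie on the $\sigma$-orbit of $\theta^{\eta_{j+1}(x)-l}(y)$, not on the $\sigma$-orbit of $y$; one must instead take a maximum over returns within time $\eta^\dagger(y)$, as in the function $M(y)$ appearing in the paper's proof.
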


\begin{proof} 
It suffices to construct an embedded map which satisfies \textit{(tt)}, \textit{(ul)}, \textit{(fc)}, has exponential tails and then apply the second part of Theorem \ref{theo:main theorem - embedded GM structure}. In order to do so, choose $u \in \hat{\cW}^1$. By topological transitivity of $T_0$, there exists for each $ h \in \mathfrak{g} \cup \{\id\}$ a word $w_h \in \cW^\infty$  
such that each word $v_h:= w_h u$  is admissible, $[w_h] \subset \Omega$, $\gamma_{v_h} \in h H_0$ and, for $h \neq \tilde{h}$,       
 $[w_h] \cap [w_{\tilde{h}}] = \emptyset$. 
These cylinders  give rise to a further embedded Markov map $\tilde{S} : \Omega \to \Omega$, $x \mapsto \theta^{\tilde{\eta}(x)}(x)$ where the new return time is defined by, for $A:= \bigcup_{ h \in \mathfrak{g} \cup \{\id\} } [v_h]$,
\[ \tilde{\eta} :  \Omega \to \mathbb{N}, \;
x \mapsto \begin{cases}  |v_h| &:\; \exists h \in \mathfrak{g} \cup \{\id\}:\,\,x\in [v_h]\\ 
 \min\left\{|w| :  w \in \hat{\cW}^\infty, x\in[w], [w] \cap  A =\emptyset \right\} &:\; x \in \Omega\setminus A.  \end{cases}
 \] 
 
Observe that $\tilde{S}$ is defined on a set of full measure and that, as 
$H_0 \subset H$, conditions \textit{(ul)} and \textit{(fc)} are immediate from the construction of $\tilde{S}$. Now assume that $g,h \in G$. Then, by topological transitivity of $T_0$, there exist $g_1, \ldots, g_k \in \mathfrak{g}$ such that $g_1\circ \cdots g_k \in ghH_0$. Hence, as $\tilde{S}$ is the full shift, there exists a word $w$ with respect to the new partition of $\Omega$ such that 
$\hat{\gamma}_w \in ghH_0 = gH_0h \subset gHh$. Hence, $\tilde{S}$ also satisfies \textit{(tt)} and it remains to show that $\tilde{S}$ is adequately embedded and has exponential tails.

 By definition, $\tilde{S}$ is a full Markov map and each branch either is an iterate of $S$ or defined on a cylinder of type $[v_h]$. In the first case, as an iterate of a Gibbs-Markov map again is a Gibbs-Markov map with respect to the same constant, the estimate in Definition \ref{def:gm-map} holds with respect to the same constant $C$. In the second case, for each $[v_h]$, the estimate holds for $\max\{C, C_{|v_h|}\}$, where $C_n$ is given by (i) in the definition of an adequate embedding (see Definition \ref{def:embedded Gibbs-Markov structure}). As $|\mathfrak{g} \cup \{\id\}|< \infty$, one obtains a uniform bound and, in particular, $\tilde{S}$ is a Gibbs-Markov map with full branches. In order to prove that $\tilde{S}$ is adequately embedded, first observe that condition (i) is inherited from $S$ and it remains to show that there exists  
 an almost surely finite function $\tilde{\eta}^\dagger:\Omega \to \N$ such that, for almost every $x \in \Om$ and $l=0, \ldots , \tilde{\eta}(x)-1$, we have $\tilde{\eta}(x)- l \leq \tilde{\eta}^\dagger(\theta^l(x))$.

\medskip
\noindent\textsc{Case 1:} 
First assume that  $x \in [v_h]$ for some $h \in \mathfrak{g} \cup \{\id\}$ and $0 \leq l < \tilde{\eta}(x)$ with $\theta^l(x)\in \Om$, we have $\tilde{\eta}(x)-l = |v_h|-l \leq M:= \max\{|v_h| : h \in \mathfrak{g} \cup \{\id\}\}$. 

\noindent\textsc{Case 2:} Now assume that $x \notin A$ and that $0 \leq l < \tilde{\eta}(x)$ with $y:=\theta^l(x)\in \Omega$. Then there exists $m \geq 0$ such that $\eta_m(x) \leq l < \eta_{m+1}(x)$. As $\eta$ satisfies (ii) of the definition of an adequate embedding, it follows that $\eta_{m+1}(x) - l \leq {\eta}^\dagger(y)$ as illustrated in Figure \ref{fig:example}.    
\begin{figure}[htbp] 
   \centering
    \def\svgwidth{0.75\textwidth}
	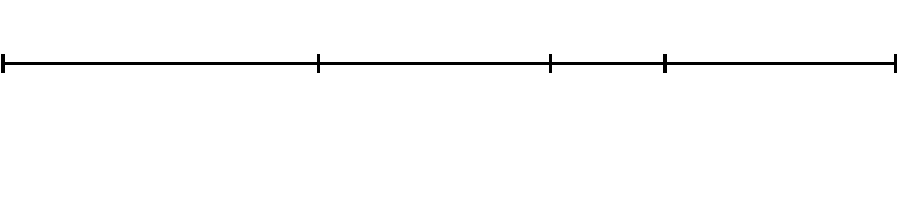
 \caption{Estimate for $\tilde{\eta}(x)-l$}
\label{fig:example}  
\end{figure}
However, by construction of $\tilde\eta$ as a stopping time, it follows that $\tilde{\eta}(x)-\eta_{m+1}(x) \le  \tilde{\eta}(\theta^{\eta_{m+1}(x)}(x)) = \tilde{\eta}(\sigma^{m+1}(x))$. Hence,
\[\tilde{\eta}(x) - l \leq {\eta}^\dagger(y) +  \tilde{\eta}(\sigma^{m+1}(x)) \leq  {\eta}^\dagger(y) + \max\left\{\tilde{\eta}(\theta^{k}(y)) : 0 <  k \leq  {\eta}^\dagger(y) \right\} =: M(y). \]
In particular, $\tilde{\eta}^\dagger(y):= \max \{N, M(y)\}$ satisfies condition (ii) of an adequate embedding. 

It remains to check that $\tilde \eta$ has exponential tails. As $\tilde \eta$ is constructed through a finite choice of elements in $\widehat{\cW^\infty}$, there exists $k \in \N$ such that the Markov partition for $S^k$ is finer than the one for $\tilde S$. Therefore, it suffices to prove that $\eta_k$ has exponential tails, which is a consequence of the  following calculation. Assume that $\eta^\ast,\eta: \Omega \to \N $ have exponential tails. That is, there exists $t \in (0,1)$ such that 
$\mu(\{x: \eta^\ast(x) =n\}) \ll t^n$ and  $\mu(\{x: \eta(x) =n\}) \ll t^n$ for all $n \in \N$. Then, using bounded distortion,
\begin{align*}
\mu\left(\left\{x: \eta(x) + \eta^\ast(S(x)) =n\right\}\right) & \asymp  \sum_{i=1}^n  \mu\left(\left\{x: \eta(x) = i  \right\}\right)  \mu\left(\left\{x:  \eta^\ast(x) =n -i \right\}\right) \ll nt^n \ll t^n.
\end{align*}
In particular,  for $\eta^\ast := \eta_k$ it follows from  $\eta_{k+1} = \eta + \eta_k\circ S$ that $\mu\left(\left\{x : \eta_{k+1}  (x) = n\right\}\right) \ll t^n$. Hence $\eta_k$ has exponential tails for each $k \in \N$.
\end{proof}

\subsection{Extensions by Schreier graphs with non-Markovian increments} \label{subsec:2xmod1}

We show how to apply embedded Gibbs-Markov maps in order to obtain amenability criteria with respect to non-Markovian increments and Ruelle expanding maps (see \cite{Ruelle--The-Thermodynamic-Formalism-For--CMP1989}).

\begin{defn} Let $(X,d)$ be a compact metric space. Then $\theta:X \to X$ is referred to as \emph{Ruelle-expanding} if there exist $a>0$ and  $\lambda \in (0, 1)$
such that the following holds: For any  $x, {y}, \tilde{x} \in X$  with $d(x, {y})<a$ and $\theta(\tilde{x})=x$, there exists a unique $\tilde{y}\in X$  with $\theta(\tilde{y})={y}$ and $d(\tilde{x}, \tilde{y})<a$, and such that this $\tilde y$ satisfies
$d(\tilde{x}, \tilde{y}) \leq  \lambda d(x,y) $.
\end{defn}

We remark that the class of class of Ruelle expanding maps is suficiently flexible to include one-sided subshifts of finite type as well as distance expanding maps on closed manifolds. Furthermore, if $\theta$ is Ruelle expanding, then it is easy to see that $\theta^n$ also is Ruelle expanding with parameters $a/2$ and $\lambda^n$.
In particular, if $T^n(\tilde{x}) =x$, this implies that for each $y$ with $d(x,y) < a/2$, there exists a unique element $\tilde{y} \in T^{-n}(\{y\})$ with $d(\tilde{x}, \tilde{y}) \leq  \lambda^n d(x,y)$. In particular, the map defined by $T_{\tilde{x}}^{-n}: y \mapsto \tilde{y}$ is Lipschitz continuous, injective and $T^n \circ T_{\tilde{x}}^{-n}$ is the identity on the open ball $B_{a/2}(x)$ around $x$ with radius $a/2$. Or in other words, each pair $(n,\tilde{x})$ comes with a homeomorphism $T_{\tilde{x}}^{-n} : B_{a/2}(x)  \to T^{-n}_{\tilde{x}}( B_{a/2}(x))$, referred to as the \emph{inverse branch} of $T^n$ at $\tilde{x}$.

In order to employ Theorem \ref{theo:Schreier - embedded GM structure}, we now use thermodynamic formalism to construct our reference measure. That is, by assuming that $\theta$ is topologically mixing and $\varphi: X \to \R$ is Hölder continuous, it is well-known (see, e.g., \cite{Ruelle--The-Thermodynamic-Formalism-For--CMP1989} or \cite{Stadlbauer-Varandas-Zhang--Quenched-And-Annealed-Equilibrium--ETDS-2023} for a more recent exposition in the setting of semigroups) that there exists a unique invariant probabilty measure $\mu$ which realizes the supremum in the variational principle (i.e., $\mu$ is as equilibrium state). Moreover, by Ruelle's operator theorem, there exists a Hölder continuous and strictly positive function $h: X \to \R$ such that the transfer operator with respect to $\mu$ is of the form
\[
\widehat{\theta} (f) (x) = \sum_{\theta y = x}  e^{\varphi(y) - P(\varphi,\theta) + \log h(y) - \log h(x)} f(y),
\]
where $P(\varphi,\theta)$ refers to the topological pressure.

\begin{example} \label{ex:local-diffeo}
If $X$ is a connected Riemmannian manifold and $\theta$ is  $C^2$-local diffeomorphism with $\|D(\theta)^{-1}\| < 1$, then the last property implies that $\theta$ is Ruelle expanding. Moreover, by combining expansion with the hypothesis that the manifold is pathwise connected, a simple argument shows that $\theta$ in fact is topologically mixing (see, e.g., Example 3.2 in \cite{Stadlbauer-Varandas-Zhang--Quenched-And-Annealed-Equilibrium--ETDS-2023}). Finally, as the $C^2$-regularity implies that $\varphi := \log \det |D(\theta)^{-1}|$ is Lipschitz continuous. Hence, there exists a unique equilibrium state.
However, as a consequence of change of variables, it follows that Lebesgue measure is a so called $\varphi$-conformal measure and that $P(\varphi,\theta)=0$. In particular, this implies that $d\mu = h d\mathrm{Leb}$.
\end{example}

We now provide sufficient conditions in order to conclude $\mu$-amenability of the Schreier graph from an extension of $\theta$. In order to do so, assume that$H$ refers to a subgroup of a finitely generated  discrete group $G$ and that $\gamma : X \to G$ is a map with the following properties with respect to the equilibrium state $\mu$.
\begin{enumerate}[label=(S\arabic*),ref=(S\arabic*)]
\item\label{item:finite-cover} The image $\gamma(X)$ of $\gamma$ is finite.
\item\label{item:transitivity}
For all open subsets $U,V \subset X$ and $g \in G$, there exist $n \in \N$ and $x \in X$ such that $x \in U \cap \theta^{-n}(V) \neq \emptyset$ and $(\gamma_x \cdots \gamma_{\theta^{n-1}(x)}) g^{-1} \in \bigcap_{h \in G} hHh^{-1}$.
 \item\label{item:embedded-Markov-connected}
The set $\Delta:= \bigcup_{n \geq 0} \bigcup_{g \in \gamma(X)} \theta^n(\partial(\gamma^{-1}(\{g\})) )$ is not dense.
\item\label{item:non-exp-decay-of-returns}
We have that
 $ \limsup_{n \to \infty} \sqrt[n]{ \mu\left(\left\{ x   :  \gamma_x \cdots \gamma_{\theta^{n-1}(x)} \in H  \right\}\right) } =1$.
\end{enumerate}
We now give a brief comment on the ideas behind \ref{item:transitivity} and \ref{item:embedded-Markov-connected}. Condition \ref{item:transitivity} essentially states that the map $T_0$ from Theorem \ref{theo:Schreier - embedded GM structure} is topological transitive, whereas \ref{item:embedded-Markov-connected} will allow in a general context to construct an adequate embedded Gibbs-Markov structure such that the associated return time is a first return, provided that the ambient space is connected.
Using first returns then allows to use exponential decay of correlations of $\mu$ in order to obtain exponential tails.

\begin{theorem} \label{theo:Non-Markovian}
Assume that $\theta: X  \to X$ is Ruelle expanding and topologically mixing, that $X$ is locally connected and that $\mu$ is the equilibrium state associated to the Hölder continuous function $\varphi: X \to \R$. Then, if $H$ is a subgroup of the finitely generated group $G$ and $\gamma : X \to G$  satisfies \ref{item:finite-cover}, \ref{item:transitivity}, \ref{item:embedded-Markov-connected} and \ref{item:non-exp-decay-of-returns},  the Schreier graph with vertices $\{Hg : g \in G\}$ and edges $\{(Hg,Hgh): g \in G, h \in \gamma(X)\}$ is $\mu$-amenable.
\end{theorem}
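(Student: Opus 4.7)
\emph{Proof proposal.}
The plan is to reduce the theorem to Theorem \ref{theo:Schreier - embedded GM structure}. To this end, I would construct a Markov partition $\alpha$ of $X$ on which $\gamma$ is constant on elements of $\alpha$, together with an adequately embedded Gibbs-Markov structure $(\Omega,\sigma,\nu,\beta)$ with exponential tails, verify topological transitivity of $T_0$ using \ref{item:transitivity}, and transfer the non-exponential decay hypothesis \ref{item:non-exp-decay-of-returns} from $X$ to $\Omega \cap \theta^{-n}(\Omega)$ via exponential mixing in order to conclude $R_\Omega(T)=1$.

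For the Markov structure, I would use \ref{item:embedded-Markov-connected} and local connectedness of $X$ to pick a nonempty connected open set $\Omega \subset X$ disjoint from $\overline{\Delta}$. Ruelle expansion furnishes, for each $n\geq 1$ and each inverse branch of $\theta^n$ meeting $\Omega$ and returning to $\Omega$, a Lipschitz homeomorphism onto its connected image. Grouping these by the first-return time $\eta(x):=\inf\{n\geq 1 : \theta^n(x)\in \Omega\}$ yields a countable partition $\beta$ of $\Omega$, and bounded distortion (from Hölder continuity of $\varphi$) makes $(\Omega,\sigma,\nu,\beta)$ with $\sigma(x):=\theta^{\eta(x)}(x)$ and $\nu:=\mu|_\Omega/\mu(\Omega)$ a Gibbs-Markov map with full branches. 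I would then enlarge $\beta$ to a Markov partition $\alpha$ of $X$ (modulo null sets) by taking as its elements the sets $\theta^j([w])$ for $[w]\in \beta$ and $0\leq j< \eta|_{[w]}$; this is the classical tower construction realised inside $X$. The key observation is that each $\theta^j([w])$ is connected and $\gamma$ is constant on it: if $y\in \theta^j([w])$ belonged to $\partial \gamma^{-1}(\{g\})$ for some $g$, then $\theta^{\eta-j}(y)$ would lie in $\Omega \cap \theta^{\eta-j}(\partial\gamma^{-1}(\{g\})) \subset \Omega \cap \Delta=\emptyset$, a contradiction. Adequacy of the embedding then follows: condition (i) of Definition \ref{def:embedded Gibbs-Markov structure} holds with $C_n=C$ from the Gibbs-Markov property of $\sigma$, and condition (ii) holds with $\eta^\dagger=\eta$ because $\eta$ is a first return (cf.\ Remark \ref{remark:embedded vs immersed}).

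Exponential tails $\mu(\{\eta>n\})\leq C\rho^n$ follow from the spectral gap of the Ruelle-Perron-Frobenius operator acting on Hölder functions, which yields exponential decay of correlations and in particular exponential return times to $\Omega$. For topological transitivity of $T_0$, given nonempty open $U,V\subset X$ and cosets $H_0g_1, H_0g_2 \in G/H_0$, applying \ref{item:transitivity} with $g:=g_1^{-1}g_2$ produces $n\in \N$ and $x\in U\cap \theta^{-n}(V)$ with $\gamma_x\cdots \gamma_{\theta^{n-1}(x)}\in H_0g_1^{-1}g_2$ (using normality of $H_0$), so that $T_0^n(x,H_0g_1)\in V\times \{H_0g_2\}$. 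Finally, writing $A_n:=\{x\in X: \gamma_x\cdots \gamma_{\theta^{n-1}(x)}\in H\}$, I would observe that $A_n$ is a union of cylinders of $\alpha_n$ (since $\gamma$ is constant on $\alpha$) and that $\mu(A_n)^{1/n}\to 1$ by \ref{item:non-exp-decay-of-returns}. A double application of exponential decay of correlations, combined with the bounded-distortion estimate $\mu([w]\cap \theta^{-n}\Omega)\asymp \mu([w])\mu(\Omega)$ valid for every $[w]\in \alpha_n$, then gives $\mu(\Omega\cap A_n\cap \theta^{-n}(\Omega))\asymp \mu(\Omega)^2\mu(A_n)$ and hence $R_\Omega(T)=1$. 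Theorem \ref{theo:Schreier - embedded GM structure} then concludes the $\mu$-amenability of the Schreier graph.

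The main obstacle is the careful verification that the tower partition $\alpha$ satisfies all requirements of Definition \ref{def:Markov_map}, in particular that it generates the Borel $\sigma$-algebra modulo $\mu$-null sets (which may require choosing $\Omega$ as a finite union of sufficiently small balls, or further refining $\beta$), while keeping $\gamma$ constant on $\alpha$ and the embedding adequate. The decay-of-correlations bridge from $\mu(A_n)$ to $\mu(\Omega\cap A_n\cap \theta^{-n}(\Omega))$ is standard modulo approximating $\1_\Omega$ by Hölder functions and exploiting the Gibbs property of $\mu$ on cylinders of $\alpha_n$.
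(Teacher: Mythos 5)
Your overall route is the same as the paper's: induce on a small region avoiding $\Delta$, chosen via \ref{item:embedded-Markov-connected} and local connectedness; use connectedness of inverse-branch images to see that $\gamma$ is constant on the tower partition $\{\theta^j(a): a\in\beta,\ 0\le j<\eta(a)\}$; get adequacy from the first-return structure; obtain exponential tails of $\eta$ from the spectral gap (the paper carries this out by an iterated transfer-operator estimate, which is the argument your one-line appeal to decay of correlations implicitly requires); deduce transitivity of $T_0$ from \ref{item:transitivity}; and conclude with Theorem \ref{theo:Schreier - embedded GM structure}. The Markov-compatibility worry you flag at the end is resolved in the paper by taking $\Omega:=b$, a cylinder of the finite Markov partition of the Ruelle expanding map contained in the connected open set $U$ disjoint from $\Delta$; then the tower partition is automatically a countable Markov partition of $X$ mod $\mu$, and your connectedness argument goes through verbatim because $\theta^j(a)\subset T_x^{-(\eta-j)}(U)$, which is connected and misses every $\partial\gamma^{-1}(\{g\})$.

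The genuine gap is your bridge from \ref{item:non-exp-decay-of-returns} to $R_\Omega(T)=1$. The claimed estimate $\mu(\Omega\cap A_n\cap\theta^{-n}\Omega)\asymp\mu(\Omega)^2\mu(A_n)$ cannot be obtained by a "double application of exponential decay of correlations": the event $A_n$ depends on the coordinates $0,\dots,n-1$, so there is no time separation between $\1_{X\times\{H\}}$-type data at time $0$ (the factor $\1_\Omega$) and $A_n$, and many $n$-cylinders inside $A_n$ simply do not meet $\Omega$, so no per-cylinder comparison of $\mu([w]\cap\Omega)$ with $\mu([w])\mu(\Omega)$ is available. Likewise $\mu([w]\cap\theta^{-n}\Omega)\asymp\mu([w])\mu(\Omega)$ with uniform constants requires $\theta^n([w])\supset\Omega$, which fails in general because the finite Markov partition is only topologically mixing, not full-branched. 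What is needed instead is a gluing argument in the spirit of Proposition \ref{prop:gurevic-pressure}: using \ref{item:transitivity}, fix for each of the finitely many elements of the finite Markov partition connecting words into and out of $\Omega$ whose $\gamma$-products lie in $H_0\subset H$; since the length-$n$ product of a point of $A_n$ already lies in $H$, pre- and appending such words keeps the product in $H$, changes $n$ only by a bounded amount, and by bounded distortion changes the measure only by a bounded factor, which gives $R_\Omega(T)=1$. (This is precisely the step the paper asserts without proof as "\ref{item:non-exp-decay-of-returns} is equivalent to $R_\Omega(T)=1$"; your proposed justification of it, as written, would fail.)
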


\begin{proof} We now check whether Theorem \ref{theo:Schreier - embedded GM structure} is applicable. In order to do so, note that \ref{item:embedded-Markov-connected} and the fact that $X$ is locally connected implies that there is an open and connected set $U$ of arbitrary small diameter such that $U\cap \Delta =\emptyset$. By choosing the diameter of $U$ sufficiently small, it follows that the inverse branches $T_x^{-n}$ are defined on all of $U$ for each $n \in \N$  and $x \in T^{-n}(U)$. So assume that $n \in \N$ and $x \in T^{-n}(U)$. By \ref{item:embedded-Markov-connected}, $T_x^{-n}(U) \cap \partial(\gamma^{-1}(\{g\})) = \emptyset$ for all $g \in \gamma(X)$.  Hence, $T_x^{-n}(U) \subset \bigcup_{g\in \gamma(X)} \mathrm{Int}(\gamma^{-1}(\{g\}))$. As $T_x^{-n}(U)$ is connected, it follows that there is a unique $g$ with $T_x^{-n}(U) \subset \mathrm{Int}(\gamma^{-1}(\{g\}))$.

Furthermore, note that each Ruelle expanding map admits a finite Markov partition $\alpha$ such that
each  {$a \in \alpha$} satisfies $\overline{a} = \overline{\mathrm{Int}(a)}$ (in particular, $\mathrm{Int}(a) \neq \emptyset$) and that
the diameters of the atoms of the partition $\alpha_{n+1}$ generated by $\alpha, \theta^{-1}(\alpha), \ldots, \theta^{-n}(\alpha)$ tend to zero as $n \to \infty$.
Hence, there are $n \in \N$ and $b \in \alpha_{n+1}$ such that $b \subset U$. Moreover, as $\theta$ is topologically mixing and $\mu$ is equivalent to the conformal measure associated to $\varphi$, $b$ has positive measure. Hence, by Poincaré's recurrence theorem, $\mu$-almost every element in $b$ has infinitely many returns to $b$.

In order to construct the adequately embedded Markov map it remains to define $\Omega := b$, $\eta(x) := \min\{ k \geq 1: \theta^k(x) \in b \}$, $\sigma: b \to b$ as the first return to $b$ and $\beta$ as the countable partition of $\Omega$ modulo $\mu$ given by those elements of $\bigcup_n \alpha_n$ which are contained in $b$ and which are associated to a first return to $b$.
The key observation is now that $b \subset U$ implies that for each $a \in \beta$ and $0 \leq n < \eta(a)$, we have that $\theta^n(a) \subset \mathrm{Int}(\gamma^{-1}(\{g\}))$ for exactly one $g \in \gamma(X)$. Hence, the map $\hat{\gamma}: b \to G$, $x \mapsto \gamma(x) \cdots \gamma(\theta^{\eta(x)-1}x)$ is constant on the atoms of $\beta$. Moreover, by substituting $\alpha$ with $ \{ \theta^n(a) : a \in \beta, 0 \leq n < \eta(a) \}$, one obtains a countable partition of $X$ modulo $\mu$ such that $\gamma$ is constant on cylinders of length 1.

We now show that $\eta$ has exponential tails by using the decay of correlations of $\theta$ with respect to the metric of the topological Markov chain associated with the partition $\alpha$. That is, we make use of the fact, after choosing the $r$ in $d_r$ in \eqref{def: d_r metric} according to the Hölder continuity of $\varphi$, that there exists $C> 0$ and $t \in (0,1)$ such that
\begin{equation} \label{eq:exponential-decay}
\|\widehat{\theta}^k(f) - \mu(f)\|_{\textrm{Lip}} \leq C t^k \textrm{Lip}(f)
\end{equation}
for any Lipschitz continuous function $f$ and $k \in \N$. If $k = \ell m + d$ for some $\ell, m \in \N$ and $1 \leq d \leq \ell$, then
\begin{align}
 \label{eq:estimate-for-exponential-tails}
\mu(\{ x \in \Omega:  \eta (x) \geq k\}) & = \int \mathbf{1}_\Omega \prod_{j =1}^{k-1} \mathbf{1}_{\Omega^c}\circ \theta^j d\mu
\leq \int  \prod_{j =1}^{m}  \mathbf{1}_{\Omega^c}\circ \theta^{\ell j} d\mu\\
\nonumber
 & = \int \widehat{\theta}^{\ell m} \left(\textstyle \prod_{j =1}^{m}  \mathbf{1}_{\Omega^c}\circ \theta^{\ell j}\right) d\mu
  =  \int  \mathbf{1}_{\Omega^c}  \widehat{\theta}^{\ell } \left(  \mathbf{1}_{\Omega^c}  \widehat{\theta}^{\ell } \left(  \mathbf{1}_{\Omega^c}    \cdots \widehat{\theta}^{\ell } \left(  \mathbf{1}_{\Omega^c}  \right) \cdots   \right)   \right) d\mu.
\end{align}
We proceed by induction. Set $f_0 = 1$, $\epsilon_0=0$, $f_{n+1} = \widehat{\theta}^{\ell }(\mathbf{1}_{\Omega^c} f_n)$ and $\epsilon_n = f_n - \mu(f_n)$. Then
\begin{align*}
  f_{n+1}   =   \widehat{\theta}^{\ell }\left( \mathbf{1}_{\Omega^c}   (\mu(f_n) + \epsilon_n)  \right)
   =    \mu(f_n) \mu(f_1)  +  \textstyle \int      \mathbf{1}_{\Omega^c} \epsilon_n d\mu + \epsilon_{n+1}.
 \end{align*}
 Hence
 \[ \mu(f_{n+1}) =  \mu(f_n)\mu(f_1) +  \textstyle \int   \mathbf{1}_{\Omega^c} \epsilon_n d\mu
     \leq  \mu(f_n)\mu(f_1) +  \mu(\Omega^c) \|\epsilon_n\|_\infty
     \leq  \mu(\Omega^c) \left(\mu(f_n) + \textrm{Lip}(\epsilon_n) \right)
     , \]
 where we have used that  $\mu(f_1) = \mu(\Omega^c)$ and $\|\epsilon_n\|_\infty \leq \textrm{Lip}(\epsilon_n)$ as $\mu(\epsilon_n) =0$.
Now, using \eqref{eq:exponential-decay}  and  $\textrm{Lip}(fg) \leq \textrm{Lip}(f)\|g\|_\infty +  \|f\|_\infty\textrm{Lip}(g) $, we obtain that
\begin{align*}
 \textrm{Lip} (\epsilon_{n+1})
 & 
 \leq  C t^\ell\left(  \textrm{Lip}(\mathbf{1}_{\Omega^c}) \mu(f_n)        + \textrm{Lip}(\mathbf{1}_{\Omega^c} \epsilon_n) \right) \\
 &
 \leq  C t^\ell\left(  \textrm{Lip}(\mathbf{1}_{\Omega^c}) \mu(f_n)  +   \textrm{Lip}(\mathbf{1}_{\Omega^c}) \|\epsilon_n\|_\infty +  \textrm{Lip}(\epsilon_n)
 \right).
\end{align*}
By combining the last two estimates, it follows that
\[
\mu(f_{n+1}) + \textrm{Lip} (\epsilon_{n+1}) \leq \mu(f_n) \left( \mu(\Omega^c) + C \textrm{Lip}(\mathbf{1}_{\Omega^c}) t^\ell \right)
+  \textrm{Lip}(\epsilon_n)  \left(  \mu(\Omega^c) + C  t^\ell (1+ \textrm{Lip}(\mathbf{1}_{\Omega^c}))\right)
\]
In particular, as $\mu(\Omega^c) < 1$, it follows that  $\mu(f_{n}) + \textrm{Lip} (\epsilon_{n}) \leq \tilde{t}^n$ all $n \in \N$ and some $\tilde{t} \in (\mu(\Omega^c),1)$ for $\ell$ sufficiently large. Hence, by \eqref{eq:estimate-for-exponential-tails} and for $k =\ell m + d$ with  $1 < d \leq \ell$,
\[
\mu(\{ x \in \Omega:  \eta (x) \geq k\})  \leq \int f_{m} d\mu \leq \tilde{t}^m.
\]
Or, in other words, $(\Omega,\beta)$ has exponential tails.

In order to conclude the proof, observe that \ref{item:transitivity} immediately implies that $T_0$ is topologically transitive as a Markov map with respect to the partition $ \{ \theta^n(a) : a \in \beta, 0 \leq n < \eta(a) \}$. Hence, as \ref{item:non-exp-decay-of-returns} is equivalent to $R_\Omega(T)=1$, it follows from Theorem \ref{theo:Schreier - embedded GM structure} that the Schreier graph is $\mu$-amenable.
\end{proof}

\begin{remark} \label{remark:local-diffeo}
We would like to emphasize that in the situation of Example \ref{ex:local-diffeo}, one easily obtains a version of Theorem \ref{theo:Non-Markovian}, whose statement is indepent of the equilibrium state $\mu$. That is, assume that $X$ is a connected and compact Riemmannian manifold, $\theta$ a  $C^2$-local diffeomorphism with $\|D(\theta)^{-1}\| < 1$ and $\gamma:  X \to G$ a map such that \ref{item:finite-cover}, \ref{item:transitivity} and \ref{item:embedded-Markov-connected} hold.
Then, with $\mu$ referring to the equilibrium state associated with $\log |\det D\theta^{-1}|$, $h = d\mu/ d \mathrm{Leb} $ is bounded away from $0$ and infinity. Hence, \ref{item:non-exp-decay-of-returns} is equivalent to
\[ \limsup_{n \to \infty} \sqrt[n]{ \mathrm{Leb}\left(\left\{ x   : H \gamma_x \cdots \gamma_{\theta^{n-1}(x)} = H  \right\}\right) } =1.\]
As Riemmannian manifolds are locally connected, Theorem \ref{theo:Non-Markovian} implies that the Schreier graph is $\mu$-amenable. However, if $\mathrm{Int}(\gamma^{-1}(\{g\})) \neq \emptyset$ for all $g \in \gamma(X)$ then $\mu(\gamma^{-1}(\{g\}))>0$ for all $g \in \gamma(X)$. Hence, as $\gamma(X)$ is finite, it follows that the Schreier graph is amenable in the usual sense.
\end{remark}

\subsection{Non-normal subgroups of Kleinian groups}  \label{subsec:kleinian}
A further application
is related to non-periodic covers of a certain class of hyperbolic manifolds, and gives an independent proof of the main result in \cite{Coulon-Dougall-Tapie--Twisted-Patterson-sullivan-Measures-And--PA2018} in a special case. 
That is, as in  \cite{Stadlbauer-Stratmann--Infinite-Ergodic-Theory-For--ETDS2005}, we refer to $G$ as an essentially free Kleinian group if $G$ acts on the standard hyperbolic space $\H$ of dimension $n$ and admits a Poincar\'e fundamental polyhedron $F$ with faces $f_1, f_2, \ldots f_{2n}$ and associated generators $g_1,g_2 \ldots g_{2n}$ of $G$ with $g_{i}(f_i) = f_{i+n}$, $g^{-1}_{i}(f_{i+n}) = f_{i}$ and $g_i^{-1}=g_{i+n}$ for $i=1,\ldots n$, such that the following conditions are satisfied. In the following, we refer to 
 $\overline{(\cdot)}_{\overline{\H}}$ as the closure in $\overline{\H}$. 
\begin{enumerate} 
\item If $\overline{(f_i)}_{\overline{\H}} \cap \overline{(\bigcup_{j\neq i}f_j)}_{\overline{\H}} \neq \emptyset $ for some $i =1,2,\ldots n$, then  $g_i,g_{i+n}$ are hyperbolic transformations, and $\overline{(f_{i+n})}_{\overline{\H}} \cap \overline{(\bigcup_{j\neq i+n}f_j)}_{\overline{\H}} \neq \emptyset $,    
\item if $\overline{(f_i)}_{\overline{\H}}\cap \overline{(f_j)}_{\overline{\H}}$ is a single point $p$ for some $j =1,2,\ldots 2n$, then $p$ is a parabolic fixed point of some $g\in G$, 
\item if $f_i \cap f_j \neq \emptyset$ for some $j =1,2,\ldots 2n$, then $g_ig_j = g_jg_i$.
\end{enumerate}
In fact, this class was defined in \cite{Stadlbauer-Stratmann--Infinite-Ergodic-Theory-For--ETDS2005} only in dimensions two and three, but the proofs in there generalize in verbatim to arbitrary dimensions. Moreover, it is worth noting that 
the  class comprises all non-cocompact, geometrically finite Fuchsian groups, the class of Schottky groups, and moreover gives rise to geometrically finite hyperbolic manifolds which may have cusps of arbitrary rank.

As shown in \cite{Stadlbauer--The-Return-Sequence-Of--FM2004,Stadlbauer-Stratmann--Infinite-Ergodic-Theory-For--ETDS2005}, it is then possible to construct a Markov map $\theta$ acting on the conical limit set $X:=L_r(G)$ of $G$ equipped with an invariant and ergodic measure $\mu$, which is equivalent to Patterson's measure $m$ such that 
the geodesic flow on the sphere bundle of $\H/G$ is measure theoretically conjugated to a special flow over the natural extension of $(X,\theta,\mu)$. As shown in \cite{Stadlbauer-Stratmann--Infinite-Ergodic-Theory-For--ETDS2005}, there are the following three distinct situations. If there are no hyperbolic elements, then an iterate of $\theta$ is uniformly expanding, $\mu$ is finite and $d\mu/dm$ is a Lipschitz continuous functions bounded from above and below. 
If $G$ has parabolic elements, then $d\mu/dm$ is always unbounded but the finiteness of $\mu$ depends on two parameters, the abscissa of convergence  $\delta$ of the Poincaré series of $G$ and the maximal rank $k_{\hbox{\tiny max}}$ of the parabolic subgroups. Namely, the measure is finite if and only if $2 \delta > k_{\hbox{\tiny max}} +1$. 

As an application of Theorem \ref{theo:main theorem - embedded GM structure} the above one now obtains a generalisation of Theorem 6.1 in \cite{Stadlbauer--An-Extension-Of-Kestens--AM2013} to subgroups with non-trivial normal core. Moreover, if the group has no parabolic elements or equivalently, is convex-cocompact, then there is a nontrivial intersection with results by Brooks, Dougall and Coulon, Dal'Bo \& Sambusetti where the same result was obtained for normal subgroups (\cite{Brooks--The-Bottom-Of-The--JRAM1985}), for normal subgroups and spaces of pinched negative curvature (\cite{Dougall--Critical-Exponents-Of-Normal--AM2019}) as well as for arbitrary subgroups and CAT(-1) spaces (\cite{Coulon-DalBo-Sambusetti--Growth-Gap-In-Hyperbolic--GFA2018}). Recently, these results were generalized in a recent preprint to \emph{strongly positively recurrent} groups with a \emph{growth gap at infinity}. 
Moreover, there are one-sided results by Roblin and Pollicott who showed that amenability implies $\delta(G) = \delta(H)$ for pinched negative curvature (\cite{Roblin--A-Fatou-Theorem-For--IJM2005}) and $h(\phi_H) \geq h(\phi_G) $
for any compact surface with a transitive geodesic (\cite{Pollicott--Amenable-Covers-For-Surfaces--AM2017}) and with $h$ referring to the topological entropy of the geodesic flow. For the other direction, one should remark the result by  
Falk \& Matsuzaki in \cite{Falk-Matsuzaki--The-Critical-Exponent-The--CGD2015} who showed that $\delta=1$ implies amenability of the graph given by the pants decomposition of a hyperbolic surface of first kind. 

\begin{theorem} \label{theo-fuchsian}
Assume that $G$ either is an essentially free or a geometrically finite Fuchsian group and let $\mathfrak{g}$ the set of generators given by the associated fundamental polyhedron. Moreover, assume that $H$ is a subgroup of $G$ such that $H_0 = \bigcap_{g \in G} gHg^{-1}$ is non-trivial. Then the Schreier graph of $H$ associated with $\mathfrak{g}$ is amenable if and only if $\delta(G) = \delta(H)$.
\end{theorem}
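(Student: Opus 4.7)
The plan is to realize the Schreier graph of $H$ as a graph extension of the Markov coding of the geodesic flow on $\H/G$ and then to apply Theorem~\ref{theo:main_result} together with Propositions~\ref{prop:symmetry}, \ref{prop:gurevic-pressure} and~\ref{prop:amenability_T_vs_S} after inducing to a Gibbs-Markov system. I first invoke the Markov coding from \cite{Stadlbauer--The-Return-Sequence-Of--FM2004, Stadlbauer-Stratmann--Infinite-Ergodic-Theory-For--ETDS2005}: a topologically mixing Markov map $(X,\theta,\mu,\alpha)$ with $X=L_r(G)$, $\mu$ ergodic, invariant and equivalent to the Patterson measure $m$, and partition $\alpha=\{[h]:h\in\mathfrak{g}\}$ such that $\theta$ acts on $[h]$ as the inverse of $h$. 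The locally constant map $\gamma:X\to \mathfrak g$ with $\gamma|_{[h]}=h$ then yields the extension $T(x,Hg)=(\theta x, Hg\gamma_x)$, which is exactly the graph extension by the Schreier graph $\mathcal G$. When $G$ has no parabolic generators, an iterate of $\theta$ is already Gibbs-Markov with full branches and I take $\Omega=X$, $\eta\equiv 1$; otherwise I take $\Omega$ to be the union of cylinders over the hyperbolic generators and $\sigma=\theta^\eta$ the first return to $\Omega$, which is Gibbs-Markov with full branches by \cite{Stadlbauer-Stratmann--Infinite-Ergodic-Theory-For--ETDS2005}. The embedding is adequate: condition (ii) of Definition~\ref{def:embedded Gibbs-Markov structure} holds with $\eta^\dagger=\eta$ since $\sigma$ is a first return, and (i) follows from the Hölder regularity of the geometric potential. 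The induced extension $(Y,S,\hat\kappa)$ with $S(x,Hg)=(\sigma x, Hg\hat\gamma_x)$ is then a graph extension of a Gibbs-Markov map with full branches.

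Second, I verify the hypotheses of Theorem~\ref{theo:main_result} and Proposition~\ref{prop:symmetry} for $S$. Topological transitivity on $\Omega\times\{Hg:g\in G\}$ reduces, through full branches of $\sigma$, to the observation that every element of $G$ is realisable as $\hat\gamma_w$ for some admissible $w$, which follows from $\mathfrak g$ generating $G$. Uniform loops is where the assumption $H_0\neq\{\mathrm{id}\}$ is essential: choose any non-trivial $h_0\in H_0$ and a finite word $u$ in the induced coding with $\hat\gamma_u=h_0$; as $H_0\subset gHg^{-1}$ for every $g\in G$, the single $u$ furnishes a loop in $\mathcal G$ at every vertex $Hg$, so $\mathcal J=\{u\}$ works. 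Symmetry of $(Y,S,\nu)$ in the sense of \eqref{eq:weakly-symmetric} follows, as in the normal case treated in \cite{Stadlbauer--An-Extension-Of-Kestens--AM2013}, from the reflection symmetry of the Patterson measure under $h\mapsto h^{-1}$ together with Bowen's property of the potential $\varphi=\log d\mu/d\mu\circ\theta$. Finally, $\hat\kappa$ finitely covers $\kappa$: since the set of $v\in\cW^1$ is finite, topological transitivity of $T_0$ (the extension by $G/H_0$, which is transitive because $\mathfrak g$ generates $G$) allows me to select, for each $v$, an induced word $w_v$ with $\gamma_v\hat\gamma_{w_v}^{-1}\in H_0\subset gHg^{-1}$ for all $g$, and $\mathcal K=\{w_v\}$ is the required finite set. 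Combining Theorem~\ref{theo:main_result}, Propositions~\ref{prop:symmetry} and~\ref{prop:amenability_T_vs_S} (and the fact that $\mu$-amenability coincides with classical amenability of $\mathcal G$ since $\mathfrak g$ is finite) yields the chain of equivalences
\[
\mathcal G\ \text{amenable} \iff \mathcal G\ \nu\text{-amenable} \iff \rho(\widehat S)=1 \iff R(S)=1.
\]

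Third, it remains to identify $R(S)=1$ with $\delta(G)=\delta(H)$. By Proposition~\ref{prop:gurevic-pressure}, $\log R(S)$ equals the Gurevic pressure $P_G(S,\varphi_S)$. Patterson-Sullivan theory gives $\log\varphi_w(\cdot)=-\delta(G)\,d(o,\hat\gamma_w\cdot o)+O(1)$ uniformly in the admissible word $w$ of the induced system, so
\[
\sum_{S^n(x,\mathbf o)=(x,\mathbf o)} e^{\sum_{k<n}\varphi_S(\sigma^k x)} \;\asymp\; \sum_{g\in H\cap \mathcal G_n} e^{-\delta(G)\,d(o,g\cdot o)},
\]
where $\mathcal G_n$ denotes the set of group elements coded by admissible words of induced length $n$. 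As each induced step contributes a uniformly bounded displacement of $o$ in $\H$ (the parabolic excursions being absorbed into single induced steps), $\mathcal G_n$ is coarsely a level set of hyperbolic displacement and the exponential growth rate of the right-hand side is $\delta(H)-\delta(G)$. Hence $P_G(S,\varphi_S)=0$, i.e.\ $R(S)=1$, if and only if $\delta(H)=\delta(G)$, completing the proof. The principal obstacle lies precisely in this last identification in the geometrically finite case: the inducing time $\eta$ has only polynomial tails (see Remark~\ref{rem:non-necessary}), so one must carefully track the distortion introduced by absorbing parabolic excursions into single induced steps in order to compare the induced symbolic growth with the hyperbolic displacement $d(o,g\cdot o)$; this is where the condition $2\delta(G)>k_\mathrm{max}+1$ implicit in the finiteness of $\mu$ enters to make the Patterson-Sullivan estimates uniform.
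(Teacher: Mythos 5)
Your overall strategy coincides with the paper's: code the group action by a Markov map, induce to a symmetric, full-branched Gibbs--Markov graph extension, verify transitivity, uniform loops and the finite-cover condition using $H_0\neq\{\mathrm{id}\}$, apply Theorem \ref{theo:main_result} together with Propositions \ref{prop:symmetry} and \ref{prop:amenability_T_vs_S} to get ``amenable $\iff R(S)=1$'', and finally identify $R(S)=1$ with $\delta(G)=\delta(H)$ via Patterson--Sullivan estimates. However, two steps have genuine gaps. First, the induced systems you construct are not Gibbs--Markov with \emph{full branches}, which is a standing hypothesis of the whole Section 3 machinery and of Definition \ref{def:embedded Gibbs-Markov structure}. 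The coding of an essentially free (or geometrically finite Fuchsian) group is a subshift with transition restrictions, not a full shift, so taking $\Omega=X$, $\eta\equiv 1$ in the convex-cocompact case fails; and the first return to a union of several length-one cylinders maps each return cylinder onto a \emph{single} cylinder of $\Omega$, not onto all of $\Omega$, so your choice of $\Omega$ as the union of the hyperbolic generator cylinders fails as well. The paper resolves this by taking $\Omega=[w]\cup[w^\dagger]$ for one word $w$ with $w_1=w_n^\dagger$ and $g_w$ hyperbolic, and by grouping the return cylinders into pairs $[wvw]\cup[wvw^\dagger]$: each pair maps onto $[w]\cup[w^\dagger]=\Omega$ (full branches), the induced partition is invariant under the involution, which is what yields the symmetry condition \eqref{eq:weakly-symmetric} with $N_n=0$ via $\mu([u])\asymp\mu([u^\dagger])$, and $\Omega$ is bounded away from the parabolic fixed points. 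Your symmetry claim is only asserted and in fact depends on exactly this choice of $\Omega$.

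Second, your transitivity argument --- ``every element of $G$ is realisable as $\hat\gamma_w$, which follows from $\mathfrak g$ generating $G$'' --- does not work: induced words must be admissible and must start in and return to $\Omega$, so the realisable group elements form a proper sub-semigroup. The same issue affects the existence of your loop word $u$ with $\hat\gamma_u\in H_0$ and of the covering words $w_v$ for the finite-cover condition. This is precisely where the paper uses $H_0\neq\{\mathrm{id}\}$ beyond uniform loops: since $L(H_0)=L(G)$ by normality, the fixed points of loxodromic elements of $H_0$ are dense in the limit set, which produces, for prescribed initial and terminal letters, admissible connector words whose group element lies in $H_0$; normality then allows these connectors to be inserted without changing the target coset ($Hg\,g_{u_1}g_vg_{u_2}=Hh$). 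Without such an argument, transitivity, uniform loops and the finite cover are all unproved. Two smaller points: your final step appeals to finiteness of $\mu$ (i.e.\ $2\delta>k_{\max}+1$), which is neither assumed in the theorem nor needed --- the paper applies the machinery directly to the (non-invariant) Patterson measure, noting that invariance plays no role in the F\o lner-based equivalences, and the geometric input for $R(S)=1\iff\delta(G)=\delta(H)$ is the parabolic gap condition of Stratmann--Velani, exactly as in Theorem 6.1 of \cite{Stadlbauer--An-Extension-Of-Kestens--AM2013}; moreover the cocompact Fuchsian case is not covered by the coding you cite and requires the Adler--Flatto construction, as in Step 4 of the paper's proof.
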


\begin{proof} If $G$ is an elementary Kleinian groups, then the theorem holds as any subgroup is amenable and $\delta(G)=0$. Hence, without loss of generality, we assume that $G$ is not elementary.  Moreover, we first consider the case of an essentially free Kleinian group  
 $G$ and start with the construction of the associated embedded Gibbs-Markov map. In order to do so, first observe that each element $a \in \alpha$ of the Markov partition of $T$ corresponds to an element $g_a$ of $G$ and that, as the elements in $\alpha$ come in pairs, there a exists  $\alpha \to \alpha $, $a \to a^\dagger$  with $(a^\dagger)^\dagger=a$ and $g_{a^\dagger} = g_a^{-1}$. It is then easy to see that this involution  extends to finite words by  $(w_1 \ldots w_n)^\dagger := (w_1^\dagger \ldots w_n^\dagger)$ and also satisfies $g_{w^\dagger}= g_{w}^{-1}$  for $w=(w_1 \ldots w_n) $ and $g_w = g_{w_1} \cdots g_{w_n}$.  

\medskip 
\noindent\textsc{Step 1: The embedded symmetric Gibbs-Markov map.}
We now employ the involution for the construction of a symmetric embedded Gibbs-Markov map which only depends on $G$. Therefore, choose a word $w=(w_1 \ldots w_n)$ such that $w_1 = w_n^\dagger$ and define $\Omega:= [w] \cup [w^\dagger] \subset [w_1]$. Furthermore, as $w_1 = w_n^\dagger$, we have, for any finite word $v$, that $wvw$ is admissible if and only if $wvw^\dagger$ is admissible, which then implies that, with a slight abuse of notation, $([wvw]\cup [wvw^\dagger]) = [w^\dagger v^\dagger w^\dagger] \cup [w^\dagger v^\dagger w]$. Hence, the involution also extends to the partition $\beta$ of $\Omega$ of the first return to $\Omega$, that is to 
\begin{align*} \beta := &\left\{ [w v w] \cup [w v w^\dagger] : \exists v \in \cW^\infty \hbox{ such that }  \theta^{|wv|} \hbox{ is the first return to } \Omega \right\} \\
& \cup \left\{ [w^\dagger v w] \cup [w^\dagger v w^\dagger] : \exists v \in \cW^\infty \hbox{ such that }  \theta^{|wv|} \hbox{ is the first return to } \Omega \right\}.  
\end{align*}
As $G$ is non-elementary, we may choose $w$ such that $g_w$ is hyperbolic.   This implies that $\Omega$ is bounded away from the parabolic points and, in particular, that $d\mu/dm \asymp 1$. As $m([w v w])$ depends only, up to a constant, from $\exp\left(-\delta(G) d\left(\mathbf{o},g_{wvw}(\mathbf{o})\right)\right)$, with $d$ referring to the  hyperbolic distance, we have $\mu([u]) \asymp \mu([u^\dagger])$ for each $u \in \beta^k$ and $k \in \infty$. It now immediately follows from this that 
the first return to $\Omega$ is symmetric as defined in \eqref{eq:weakly-symmetric} with respect to $N_n =0$ and $C_n =C$ for some constant $C$. Moreover, as shown in \cite{Stadlbauer-Stratmann--Infinite-Ergodic-Theory-For--ETDS2005}, the first return has the Gibbs-Markov property.  

\medskip 
\noindent\textsc{Step 2: Uniform loops and transitivity.}
We now proceed with the construction of the graph extensions and use the non-triviality of $H_0$ in order to obtain transitivity and uniform loops  by using words with $g_{\cdot} \in H_0$ as a kind of spacer. 

First observe that the limit sets $L(G)$ and $L(H_0)$ coincide by normality. Furthermore, the set of fixed points of loxodromic elements in $H_0$ is dense in $L(H_0)$. This leads to the observation that, for a given $a \in \alpha$, there always exists $v \in \cW^\infty$ such that $av$ is admissible and $g_v$ is a loxodromic element in $H_0$. By the same argument, for each $b \in \alpha$, there exists $w \in \cW^\infty$ such that $wb^\dagger$ is admissible and $g_w$ also is a loxodromic element in $H_0$. Moreover, one may choose $v,w$ such that $g_v \neq g_w$. In particular, $ g_v g_w^{-1} \in H_0 \setminus \left\{\id \right\}$ and, after a possible canceling of letters in $(vw^\dagger)$, there is $u \in \cW^\infty$ such that $aub$ is admissible and $g_u \in H_0$.  

The graph extension $T$ is now defined by $T:(x,Hg) \mapsto (\theta(x),Hgg_a)$, for $x \in [a]$, and topological transitivity is equivalent to prove that, for any $a,b \in \alpha$ and  $g,h \in G$, there exists $w \in \cW^\infty$ such that $awb$ is admissible and such that $Hgg_{w} = Hh$. This word $w$ might be constructed with the above as follows. Choose $v \in \cW^\infty$ such that $g_v = g^{-1}h$ and then $u_1,u_2  \in \cW^\infty$ such that $au_1v$ and $vu_2b$ are admissible and $g_{u_1},g_{u_2} \in H_0$. By normality of $H_0$ in $G$, we hence have that $Hgg_{u_1vu_2} = Hgg_{u_1}g_vg_{u_2}= Hgg_{u_1}g^{-1}hg_{u_2} = Hh$. 
Hence, $w:= (u_1vu_2)$ satisfies the required properties and $T$ is transitive. 

The proof of uniform loops is similar and depends again on the existence of such $u$ and the following choice of the word $w$ in the construction of the embedded Gibbs-Markov map. Choose $v\in \cW^\infty$ and $u \in \cW^\infty $ such that $v u v^\dagger$ is admissible and $g_u \in H_0$. Now define $\Omega$ and $\beta$ as  above for $w:= v u v^\dagger$. Then $g_{w u^\dagger } \in H_0$ and $\sigma^{2|vu|}([w u^\dagger w^\dagger] \cup  [w u^\dagger w]) = \Omega$. In an analoguous way, it is possible to define for each $g \in \mathfrak{g}$ a cylinder $v$ with respect to $\beta$ such that $H_0g = H_0g_v$. By the same construction as in the proof of Theorem \ref{theo:Schreier - embedded GM structure}, one then obtains $\eta : \Omega \to \N$ such that $x \mapsto \sigma^{\eta(x)}(x)$ defines an adequately embedded Gibbs-Markov map $S$ which satisfies the three required conditions. Moreover, by choosing $\eta$ in a symmetric way, the Gibbs Markov map is symmetric.  

\medskip 
\noindent\textsc{Step 3: The abscissa of convergence.} Observe that it is not required in Theorem \ref{theo:main theorem - embedded GM structure} that the reference measure is $\theta$-invariant as the proof of the equivalence of $\mu$- and $\nu$-amenability is based on Følner sets. Hence, Theorem \ref{theo:main theorem - embedded GM structure} is applicable to Pattersons measure. Hence, $\mathcal{G}$ is $m$-amenable if and only if $R(S)=1$. Moreover, as $\mathfrak{g}$ is finite, the notions of $m$-amenability and amenability as defined by Gerl coincide in this situation. 
The assertion of the theorem would now follow if $R(S)=1$ if and only if $\delta(G)=\delta(H)$. However, the proof in of  Theorem 6.1 in \cite{Stadlbauer--An-Extension-Of-Kestens--AM2013} applies in verbatim to the situation in here, as it  is a consequence of the polynomial contribution of the parabolic subgroups to the Poincaré series.    

\medskip 
\noindent\textsc{Step 4: Compact surfaces.}
It remains to show the theorem for cocompact Fuchsian groups. However, by using the construction of Adler \& Flatto in \cite{Adler-Flatto--Geodesic-Flows-Interval-Maps--BAMSNS1991},
the above proof can be adapted easily.
\end{proof}

\begin{remark}\label{rem:non-necessary} 
This remark is related to Theorem \ref{theo-fuchsian} and the condition of exponential tails in Proposition \ref{prop:pressure_T_vs_S}. 
Recall that it follows from the top representation of horospheres as in \cite{Stratmann-Velani--The-Patterson-Measure-For--PLMS31995} that the parabolic gap condition holds, i.e. the abscissa of convergence of any parabolic subgroup is strictly smaller than the abscissa of the whole group, which allows to prove Theorem \ref{theo-fuchsian} without touching 
the condition of exponential tails. Furthermore, as the coding map associated with a geometrically finite Fuchsian or essentially free Kleinian group might have parabolic fixed points, it follows from the expansion of Patterson's measure around a parabolic point (see, e.g., the \emph{global measure formula} in \cite{Stratmann-Velani--The-Patterson-Measure-For--PLMS31995}), that the induced map might have polynomial tails. Hence, the condition of exponential tails is not necessary for Theorem \ref{theo:main theorem - embedded GM structure}. However, if the parabolic gap condition is not satisfied, e.g. for some surfaces with cusps of variable curvature as constructed in \cite{Dalbo-Peigne-Sambusetti--Convergence-And-Counting-In--AIFG2017}), then Theorem \ref{theo-fuchsian} probably does not hold. On the other hand, as the so-called \emph{growth gap at infinity} generalizes the parabolic gap condition to the context of CAT(-1) spaces, the results in \cite{Coulon-Dougall-Tapie--Twisted-Patterson-sullivan-Measures-And--PA2018} show that this generalized gap condition is sufficient for applications in geometry.
\end{remark}

\begin{remark}\label{rem:variable-curvature} As a closing remark with respect to this class of applications, we would like to point out that recent advances in the coding of the geodesic flow on convex-compact CAT(-1)-spaces (\cite{Constantine-Lafont-Thompson--Strong-Symbolic-Dynamics-For--JLP-M2020}) allow to adapt the proof of Theorem \ref{theo-fuchsian} to the setting of variable curvature. Namely, based on the symbolic representation from
\cite{Constantine-Lafont-Thompson--Strong-Symbolic-Dynamics-For--JLP-M2020} for the geodesic flow on the compact space, the construction of the group extension in \cite[Chapter 6]{Bispo-Stadlbauer--The-Martin-Boundary-Of--IJM2023} adapts in verbatim as above to graphs as each letter of the coding is associated to an element of the convex-cocompact isometry group. As the proof of Theorem \ref{theo-fuchsian} adapts in verbatim to this setting, one therefore obtains a different proof of a slightly weaker version of the main result of \cite{Coulon-DalBo-Sambusetti--Growth-Gap-In-Hyperbolic--GFA2018}.
\end{remark}

\section{Random walks on graphs and semigroups: the amenability criteria of Day and Gerl}
\label{subsec:Day}
The results of Kesten and Day provide amenability criteria through random walks with independent increments on groups and semigroups, respectively. That is, the spectral radius  of the associated Markov operator is equal to one if and only if the semigroup is amenable. Furthermore, if the random walk is symmetric, then the exponential growth of the return probability in time $n$ vanishes. 

These results are related to  Theorem \ref{theo:main_result} and Proposition \ref{prop:symmetry} in here through the Cayley graph of the semigroup which is  constructed as follows. 
Let  $\mathcal{S}$ be a discrete semigroup such that there exist 
$\mathbf{o}\in \mathcal{S}$ and a set $\mathfrak{g}$, such that each element in  $\mathcal{S}$ can be written as $\mathbf{o}\gamma_1 \cdots \gamma_n$, for $n\in \N\cup\{0\}$ and $\gamma_i \in \mathfrak{g}$. In this situation, $\mathcal{G} =\left\{\mathcal{S},\left\{(g,g\gamma) : g \in \mathcal{S}, \gamma \in \mathfrak{g} \right\} \right\}$ is referred to as the Cayley graph of $\mathcal{S}$ with root $\mathbf{o}$  and generator set  $\mathfrak{g}$. We now assume as in Day (\cite{Day--Convolutions-Means-And-Spectra--IJM1964}), that $\mathcal{S}$ satisfies the right cancelation property and that there exist right units. That is, $gh=\tilde{g}h$ implies that $g=\tilde{g}$ and there exists $u$ such that $gu=g$ for all $g \in \mathcal{S}$, or in other words, the map $\kappa_h : \mathcal{S} \to \mathcal{S}$, $g \mapsto gh$ is injective and $\kappa_u = \id$.
Now assume that $(X,\te,\mu)$ is a full Gibbs-Markov map and that $\iota: X \to \mathfrak{g}$ is onto and constant on atoms of $\alpha$. Then 
\[ T: X\times \mathcal{S} \to X\times \mathcal{S}, \quad (x,g)\mapsto \left(\theta(x),\kappa_{\iota(x)}(g)\right) \]
defines a topological Markov chain. Moreover, as $\iota$ is onto and $\mathfrak{g}$ generates $\mathcal{S}$, there exists $n \in \N$ and $(w_1 \ldots w_n) \in \cW^n$ such that $\mathbf{o} \iota(w_1) \cdots \iota(w_n)$ is a right unit. 
In particular, $T^n$ has uniform loops. However, as $\kappa$ is injective but not necessarily surjective, one has to require in addition that 
$\mathcal{S} h = \mathcal{S}$ for all $h \in \mathfrak{g}$ in order to obtain a nearest neighbour cocycle as in Definition \ref{defn:graph_extension}. We remark that this property appears to be essential for many arguments in here as it provides independence of the number of  preimages of $T$ from the second coordinate. Furthermore, it is worth noting that the condition is related to the embeddability of $\mathcal{S}$ in a group. 

As a consequence of the above, there is a non-empty intersection with the results of Kesten and Day. If $\mathcal{S}$ is a group, then Theorem \ref{theo:main_result} and Proposition \ref{prop:symmetry} are generalisations of the amenability criteria of Kesten (\cite{Kesten--Full-Banach-Mean-Values--MS1959}) to measures with the Gibbs-Markov property as Kesten's 
results are covered by the special case that  $\mu$ is a Bernoulli probability measure. That is, for a given probability measure on $\alpha$, the measure of a cylinder $[w_1,\ldots,w_n] \in \alpha_n$ is given by $\mu([w_1,\ldots,w_n]) = \prod_{k=1}^n \mu([w_k])$. 
In particular, our results might be seen as amenability criteria through stationary, exponentially $\psi$-mixing increments.   Analogously, the results of Day in \cite{Day--Convolutions-Means-And-Spectra--IJM1964} (for groups, see also \cite{Derriennic-Guivarch--Theoreme-De-Renouvellement-Pour--CRASPSA1973}) are generalised under the additional hypothesis that $\kappa$ is surjective. 
 
In the setting of random walks on graphs, we now recall the result by Gerl on strong isoperimetric inequalities in \cite{Gerl--Amenable-Groups-And-Amenable--1988}. In there, translated to the setting of graph extensions above, he considers an extension of a Gibbs-Markov map with full branches and with respect to a finite alphabet by a locally finite graph where $\mu$  is an invariant and reversible Markov measure, that is $d\mu\circ\theta/d\mu$ is constant on the atoms of $\alpha$ and the symmetry condition in \eqref{eq:weakly-symmetric} holds for $C_n=1$ and $N_n=0$. In this context, he shows that 
$R(T)=1$, $\rho(\widehat{T})=1$ and the existence of $a>0$ such that 
\[ a|K| \leq  |\partial K|, \quad \forall K \subset \V \hbox{ finite}\]
are equivalent. The latter property is referred to as a strong isoperimetric inequality and, as it easily can be verified, is equivalent  to non-amenability. Furthermore, the symmetry condition in \cite{Gerl--Amenable-Groups-And-Amenable--1988} automatically implies that $T^2$ always has uniform loops. Hence, as above, the results for general Gibbs-Markov measures in  Theorem \ref{theo:main_result} and Proposition \ref{prop:symmetry} can be seen as generalisations to exponentially $\psi$-mixing increments. 

\subsection*{Acknowledgements}
J. Jaerisch acknowledges financial support by the JSPS KAKENHI 21K03269 and 24K06777. E. Rocha acknowledges financial support by Coordenação de Aperfeiçoamento de Pessoal de Nível Superior - Brasil (CAPES) and M. Stadlbauer by the Fundação de Amparo à Pesquisa do Estado do Rio de Janeiro (FAPERJ) through grant E-26/210.388/2019 and, in part, by CAPES - Finance Code 001 through a visiting grant of M. Stadlbauer of the PrInt program.


\end{document}